\numberwithin{equation}{section}
\newtheorem*{coro}{Corollary} 
\newtheorem*{prop}{Proposition}
\newtheorem*{lem}{Lemma}
\newtheorem*{thm}{Theorem}
\newtheorem*{rmk}{Remark}
\theoremstyle{definition}
\newcommand{\cB}{{\mathcal B}}
\newcommand{\cO}{{\mathcal O}}
\newcommand{\cF}{{\mathcal F}}
\newcommand{\cN}{{\mathcal N}}
\newcommand{\cH}{{\mathcal H}}
\newcommand{\cP}{{\mathcal P}}
\newcommand{\cL}{{\mathcal L}}
\newcommand{\bP}{{\mathbb P}}
\newcommand{\bQ}{{\mathbb Q}}
\newcommand{\tr}{{\text{tr}}}
\newcommand{\Hom}{{\text{Hom}}}
\newcommand{\End}{{\text{End}}}
\newcommand{\Ad}{{\text{Ad}}}
\newcommand{\supp}{{\text{supp}}}
\newcommand{\rc}{{\mathrm c}}
\newcommand{\Lb}{{\mathfrak{b}}}
\newcommand{\Lt}{{\mathfrak{t}}}
\newcommand{\Lg}{{\mathfrak g}}
\newcommand{\Lo}{{\mathfrak o}}
\newcommand{\Ln}{{\mathfrak{n}}}
\newcommand{\Ll}{{\mathfrak{l}}}
\newcommand{\Lp}{{\mathfrak{p}}}
\newcommand{\bX}{{\mathbf{X}}}
\newcommand{\bY}{{\mathbf{Y}}}
\newcommand{\tF}{{\textbf{F}}}
\newcommand{\tk}{{\textbf{k}}}
\newcommand{\p}{\perp}
\newcommand{\la}{\langle}
\newcommand{\ra}{\rangle}
\newcommand{\beq}{\begin{equation*}}
\newcommand{\eeq}{\end{equation*}}
\begin{document}
\title[Combinatorics of Springer correspondence]
{Combinatorics of the Springer correspondence for classical Lie algebras and their duals in characteristic 2}
        \author{Ting Xue}
        \address{Department of Mathematics, Massachusetts Institute of Technology,
Cambridge, MA 02139, USA}
        \email{txue@math.mit.edu}
\maketitle
\begin{abstract}
We give a combinatorial description of the Springer correspondence
for classical Lie algebras of type $B,C$ or $D$ and their duals in
characteristic 2. The combinatorics used here is of the same kind as
those appearing in the description of (generalized) Springer
correspondence for unipotent case of classical groups by Lusztig in
odd characteristic and by Lusztig and Spaltentstein in
characteristic 2.
\end{abstract}
\section{introduction}
Let $G$ be a connected reductive algebraic group over an
algebraically closed field of characteristic $p$. Let $\Lg$ be the
Lie algebra of $G$ and $\Lg^*$ the dual vector space of $\Lg$. When
$p$ is large enough, Springer \cite{Sp1} constructs a correspondence
which associates to an irreducible character of the Weyl group of
$G$ a unique pair $(x,\phi)$ with $x\in\Lg$ nilpotent and $\phi$ an
irreducible character of the component group
$A_G(x)=Z_G(x)/Z_G^0(x)$. For arbitrary $p$, Lusztig \cite{Lu1}
constructs the generalized Springer correspondence which is related
to unipotent conjugacy classes in $G$. Assume $p=2$ and $G$ is of
type $B,C$ or $D$, a Springer correspondence for $\Lg$ (resp.
$\Lg^*$) is constructed in \cite{X1} (resp. \cite{X2}) using a
similar construction as in \cite{Lu1,Lu4}.

Assume $G$ is classical. When $p$ is large, Shoji \cite{sh1}
describes an algorithm to compute the Springer correspondence which
does not provide a closed formula. A combinatorial description of the
generalized correspondence for $G$ is given by Lusztig \cite{Lu1}
for $p\neq 2$  and by Lusztig, Spaltenstein \cite {LS} for $p=2$.
Spaltenstein \cite{Spal} describes a part of the Springer
correspondence for $\Lg$ (when $p=2$) under the assumption that the
theory of Springer representation is still valid in this case. We
describe the Springer correspondence for $\Lg$ and $\Lg^*$  using
similar combinatorics that appears in \cite{Lu1,LS}. It is very nice
that this combinatorics gives a unified description for
(generalized) Springer correspondences of classical groups in all
cases, namely, in $G$, $\Lg$ and $\Lg^*$ in all characteristics.
Moreover it gives rise to closed formulas for computing the
correspondences.

\section{Recollections and outline}

\subsection{}\label{ssec-n1} Throughout this paper, $\tk$
denotes an algebraically closed field of characteristic 2 unless
otherwise stated, $G$ denotes a connected algebraic group of type
$B,C$ or $D$ over $\tk$, $\Lg$ the Lie algebra of $G$ and $\Lg^*$
the dual vector space of $\Lg$. There is a natural coadjoint action
of $G$ on $\Lg^*$, $g.\xi(x)=\xi(\Ad(g^{-1})x)$ for $g\in
G,\xi\in\Lg^*$, $x\in\Lg$, where $\Ad$ is the adjoint action of $G$
on $\Lg$.

\subsection{} For a finite group $H$, we denote $H^\wedge$ the set
of irreducible characters of $H$.

Denote $\mathbf{W}_G$  the Weyl group of $G$ and $\mathfrak{A}_\Lg$
(resp. $\mathfrak{A}_{\Lg^*}$) the set of all pairs
$(\mathrm{c},\mathcal{F})$ with $\mathrm{c}$ a nilpotent $G$-orbit
in $\Lg$ (resp. $\Lg^*$) and $\mathcal{F}$ an irreducible
$G$-equivariant local system on $\mathrm{c}$ (up to isomorphism).
Then $\mathfrak{A}_\Lg$ (resp. $\mathfrak{A}_{\Lg^*}$) can be identified with
 the set of all pairs $(x,\phi)$ (resp. $(\xi,\phi)$) with
$x\in\Lg$ (resp. $\xi\in\Lg^*$) nilpotent (up to $G$-action) and
$\phi\in A_G(x)^\wedge$ (resp. $\phi\in A_G(\xi)^\wedge$), where
$A_G(x)=Z_G(x)/Z_G^0(x)$, $A_G(\xi)=Z_G(\xi)/Z_G^0(\xi)$,
$Z_G(x)=\{g\in G|\Ad(g)x=x\}$ and $Z_G(\xi)=\{g\in G|g.\xi=\xi\}$.

\subsection{}\label{ssec-isogeny}A Springer correspondence for $\Lg$
(resp. $\Lg^*$) is constructed in \cite{X1} (resp. \cite{X2})
assuming $G$ is adjoint (resp. simply connected). The correspondence
is a bijective map from $\mathfrak{A}_{\Lg}$ (resp.
$\mathfrak{A}_{\Lg^*}$) to $\mathbf{W}_G^\wedge$. This induces a
Springer correspondence for any $\Lg$ (resp. $\Lg^*$) as in
\ref{ssec-n1}. In fact, there are natural bijections between the
sets of nilpotent orbits in two Lie algebras (resp. duals of the Lie
algebras) of groups in the same isogeny class, and moreover,
corresponding component groups of centralizers are isomorphic. Hence
the sets $\mathfrak{A}_{\Lg}$ (resp. $\mathfrak{A}_{\Lg^*}$) are
naturally identified in each isogeny class.

\subsection{}\label{ss1-1} For a Borel
subgroup $B$ of $G$, we write $B=TU$ a Levi decomposition of $B$ and
denote $\Lb$, $\Lt$ and $\Ln$ the Lie algebra of $B,T$ and $U$
respectively. We define $\Ln^*=\{\xi\in\Lg^*|\xi(\Lb)=0\}$ and
$\Lb^*=\{\xi\in\Lg^*|\xi(\Ln)=0\}$.

For a parabolic subgroup $P$ of $G$, we denote $U_P$ the unipotent
radical of $P$,  $\Lp$ and $\Ln_P$ the Lie algebra of $P$ and $U_P$
respectively. For a Levi subgroup $L$ of $P$, we denote $\Ll$ the
Lie algebra of $L$. Define $\Lp^*=\{\xi\in\Lg^*|\xi(\Ln_P)=0\}$,
$\Ln_P^*=\{\xi\in\Lg^*|\xi(\Ll\oplus\Ln_P)=0\}$ and
$\Ll^*=\{\xi\in\Lg^*|\xi(\Ln_P\oplus\Ln_P^-)=0\}$ where
$\Lg=\Ll\oplus\Ln_P\oplus\Ln_P^-$. We have
$\Lp^*=\Ll^*\oplus\Ln_P^*$.

\subsection{}\label{sec-res}
Let $P$ be a parabolic subgroup of $G$ with a Levi decomposition
$P=LU_P$, where ${}_{s.s}\text{rank}(L)={}_{s.s}\text{rank}(G)-1$ (${}_{s.s}\text{rank}$ denotes the semisimple rank). We identify $L$
with $P/U_P$ and $\Ll$ with $\Lp/\Ln_P$. Let $x\in\Lg$ and
$x'\in\Ll$ be nilpotent elements. Consider the variety
$$Y_{x,x'}=\{g\in G|\Ad(g^{-1})(x)\in x'+\Ln_P\}.$$
The group $Z_G(x)\times Z_L(x')U_P$ acts on $Y_{x,x'}$ by
$(g_0,g_1).g=g_0gg_1^{-1}$.

Let $d_{x,x'}=(\dim Z_G(x)+\dim Z_L(x'))/2+\dim\Ln_P$. We have $\dim
Y_{x,x'}\leq d_{x,x'}$ (see Proposition \ref{prop-dim} (ii)). Let
$S_{x,x'}$ be the set of all irreducible components of $Y_{x,x'}$ of
dimension $d_{x,x'}$. Then the group $A_G(x)\times A_L(x')$ acts on
$S_{x,x'}$. Denote $\varepsilon_{x,x'}$ the corresponding
representation.  We prove in section \ref{sec-rf} the following
restriction formula
\begin{equation*}\label{resfor}
(\mathbf{R})\qquad\la \phi\otimes\phi',\varepsilon_{x,x'} \ra=\la
\text{Res}^{\mathbf{W}_G}_{\mathbf{W}_{L}}\rho^G_{x,\phi},\rho^L_{x',\phi'}\ra_{\mathbf{W}_{L}},
\end{equation*}
where $\phi\in A_G(x)^\wedge,\phi'\in A_L(x')^\wedge$, and
$\rho^G_{x,\phi}\in \mathbf{W}_G^\wedge$, $\rho^L_{x',\phi'}\in
\mathbf{W}_L^\wedge$ correspond to the pairs
$(x,\phi)\in\mathfrak{A}_\Lg,(x',\phi')\in\mathfrak{A}_\Ll$
respectively under the Springer correspondence.

It suffices to consider the case where $G$ is adjoint (see
\ref{ssec-isogeny}). The proof is essentially the same as that of
the restriction formula in unipotent case \cite{Lu1}.

\subsection{} We preserve the notations from \ref{sec-res}. Let $\xi\in\Lg^*$ and
$\xi'\in\Ll^*$ be nilpotent elements. We define
$Y_{\xi,\xi'},S_{\xi,\xi'},\varepsilon_{\xi,\xi'}$ as
$Y_{x,x'},S_{x,x'},\varepsilon_{x,x'}$ replacing $x,x'$,$\Lp,\Ln_P$
by $\xi,\xi'$,$\Lp^*,\Ln_P^*$ respectively and adjoint $G$-action on
$\Lg$ by coadjoint $G$-action on $\Lg^*$. We identify $\Ll^*$ with
$\Lp^*/\Ln_P^*$. We have the following restriction formula
\begin{equation*}\label{resfor}
(\mathbf{R}')\qquad\la \phi\otimes\phi',\varepsilon_{\xi,\xi'}
\ra=\la
\text{Res}^{\mathbf{W}_G}_{\mathbf{W}_{L}}\rho^G_{\xi,\phi},\rho^L_{\xi',\phi'}\ra_{\mathbf{W}_{L}},
\end{equation*}
where $\phi\in A_G(\xi)^\wedge,\phi'\in A_L(\xi')^\wedge$, and
$\rho^G_{\xi,\phi}\in \mathbf{W}_G^\wedge,\rho^L_{\xi',\phi'}\in
\mathbf{W}_L^\wedge$ correspond to the pairs
$(\xi,\phi)\in\mathfrak{A}_{\Lg^*},(\xi',\phi')\in\mathfrak{A}_{\Ll^*}$
respectively under the Springer correspondence. The proof of
$(\mathbf{R}')$ is entirely similar to that of $(\mathbf{R})$ and is
omitted.

\subsection{}\label{ssec-spd}The reference for this subsection and \ref{ssec-or} is \cite{X2}.
Let $V$ be a vector space of dimension $2n$ over $\tk$ equipped with
a non-degenerate symplectic form $\mathbf{\beta}:V\times
V\rightarrow \tk$. The symplectic group is defined as
$Sp(2n)=Sp(V)=\{g\in GL(V)\ |\ \beta(gv,gw)=\beta(v,w), \forall\
v,w\in V\}$ and its Lie algebra is
$\mathfrak{sp}(2n)=\mathfrak{sp}(V)=\{x\in \mathfrak{gl}(V)\ |\
\beta(xv,w)+\beta(v,xw)=0, \forall\ v,w\in V\}$.

 Recall that for a nilpotent
element $\xi\in \mathfrak{sp}(2n)^*$, we associate a well-defined
quadratic form $\alpha_\xi:V\to\tk$, $\alpha_\xi(v)=\beta(v,Xv)$ and
a nilpotent endomorphism $T_\xi:V\to V$,
$\beta(T_\xi(v),w)=\beta_\xi(v,w)$, where $X\in\End(V)$ is such that
$\xi(-)=\tr(X-)$ and $\beta_\xi$ is the bilinear form associated to
$\alpha_\xi$, namely,
$\beta_\xi(v,w)=\alpha_\xi(v+w)-\alpha_\xi(v)-\alpha_\xi(w)$ for $v,w\in V$.
Moreover, we define the function $\chi_\xi:\mathbb{N}\to\mathbb{N}$
by $\chi_\xi(m)=\min\{k\geq 0|\forall\ v\in
V,T_\xi^mv=0\Rightarrow\alpha_\xi(T_\xi^kv)=0\}$.

\subsection{}\label{ssec-or}
Let $V$ be a vector space of dimension $N$ over $\tk$ equipped with
a non-degenerate quadratic form $\alpha:V\rightarrow \tk$. Let
$\beta:V\times V\rightarrow\tk$ be the bilinear form associated to
$\alpha$, namely, $\beta(v,w)=\alpha(v+w)-\alpha(v)-\alpha(w)$ for all $v,w\in V$. The
orthogonal group is defined as $O(N)=O(V)=\{g\in GL(V)\ |\
\alpha(gv)=\alpha(v), \forall\ v \in V\}$ and its Lie algebra is
$\Lo(N)=\Lo(V)=\{x\in \mathfrak{gl}(V)\ |\ \beta(xv,v)=0, \forall\
v\in V\text{ and tr}(x)=0\}$. We define $SO(N)$ to
be the identity component of $O(N)$. 

Recall that for a nilpotent element $\xi\in \mathfrak{o}(2n+1)^*$,
we associate a well-defined bilinear form $\beta_\xi:V\times
V\to\tk$, $\beta_\xi(v,w)=\beta(Xv,w)+\beta(v,Xw)$ where
$X\in\End(V)$ is such that $\xi(-)=\tr(X-)$. Let $v_i$,
$i=0,\ldots,m$, be the uniquely determined set of
vectors by $\xi$ (see \cite[3.2]{X2}, we assume that $\alpha(v_m)=1$). If $m\geq 1$ let $u_i$,
$i=0,\ldots,m-1$ be a set of vectors as in \cite[Lemma 3.6]{X2}. Let
$V_{2m+1}$ be the vector space spanned by
$u_i,i=0,\ldots,m-1,v_i,i=0,\ldots,m$, $W$ a complementary vector
space of $V_{2m+1}$ in $V$ if $m=0$ and $W=\{v\in
V|\beta(v,V_{2m+1})=\beta_\xi(v,V_{2m+1})=0\}$ if $m\geq 1$. Then
$V=V_{2m+1}\oplus W$. Recall that $\beta$ is nondegenerate on $W$.
Define $T_\xi:W\to W$ by $\beta(T_\xi(w),w')=\beta_\xi(w,w')$ and a
function $\chi_W:\mathbb{N}\to\mathbb{N}$ by $\chi_W(s)=\min\{k\geq
0|\forall\ v\in V,T_\xi^sv=0\Rightarrow\alpha(T_\xi^kv)=0\}$.

Assume $m\geq 1$. Note that the set $\{u_i\}_{i=0}^{m-1}$ and $W$
depend on the choice of $u_0$ and are uniquely determined by $u_0$.
Suppose we take $\tilde{u}_0=u_0+w_0$, where $w_0\in W$ and
$\alpha(w_0)=0$, then $\tilde{u}_i=u_i+T_\xi^iw_0$ defines another
set of vectors as in \cite[Lemma 3.6]{X2}. Let $\tilde{V}_{2m+1}$,
$\tilde{W}$, $\tilde{T}_\xi$ be defined as $V_{2m+1}$, $W$, $T_\xi$
replacing $u_i$ by $\tilde{u}_i$. Then $V=\tilde{V}_{2m+1}\oplus
\tilde{W}$ and
$\tilde{W}=\{\sum_{i=0}^m\beta(w,T_\xi^iw_0)v_i+w|w\in W\}.$ On
$\tilde{W}$, we have
$\tilde{T}_\xi(\sum_{i=0}^m\beta(w,T_\xi^iw_0)v_i+w)=\sum_{i=0}^m\beta(w,T_\xi^{i+1}w_0)v_i+T_\xi
w$.

From now on, we always assume the decomposition $V=V_{2m+1}\oplus W$
is a normal form of $\xi$, namely, if
$W=W_{\chi(\lambda_1)}(\lambda_1)\oplus\cdots\oplus
W_{\chi(\lambda_s)}(\lambda_s)$ with
$\lambda_1\geq\cdots\geq\lambda_s$ (notation as in \cite{X2}), then
$m\geq \lambda_1-\chi(\lambda_1)$.

\subsection{}For $n\geq 1$, let $\mathbf{W}_n$ denote a Weyl group of type
$B_n$ (or $C_n$). The set $\mathbf{W}_n^\wedge$ is parametrized by
ordered pairs of partitions $(\mu,\nu)$, $\mu=(\mu_1\geq\mu_2\geq\cdots\geq 0)$,
$\nu=(\nu_1\geq\nu_2\geq\cdots\geq 0)$, with
$\sum\mu_i+\sum\nu_i=n$. We use the convention that the trivial
representation corresponds to $(\mu,\nu)$ with $\mu=(n)$ and the
sign representation corresponds to $(\mu,\nu)$ with $\nu=(1^n)$.

For $n\geq 2$, let $\mathbf{W}_n'\subset \mathbf{W}_n$ denote a Weyl
group of type $D_n$. Let $\mathbf{W}_0'=\mathbf{W}_1'=\{1\}$. Let
${\mathbf{W}_n^\wedge}'$ be the quotient of $(\mathbf{W}_n')^\wedge$
by the natural action of $\mathbf{W}_n/\mathbf{W}_n'$. The
parametrization of $\mathbf{W}_n$ by ordered pairs of partitions
induces a parametrization of ${\mathbf{W}_n^\wedge}'$ by unordered
pairs of partitions $\{\mu,\nu\}$. Moreover, $\{\mu,\nu\}$
corresponds to one (resp. two) element(s) of
$(\mathbf{W}_n')^\wedge$ if and only if $\mu\neq\nu$ (resp.
$\mu=\nu$). We say that $\{\mu,\nu\}$ and the corresponding elements
of ${\mathbf{W}_n^\wedge}'$ and $(\mathbf{W}_n')^\wedge$ are
non-degenerate (resp. degenerate).

\subsection{}\label{ssec-weylgp}

Assume  $G=Sp(2n)$ or $G=O(2n+1)$. The Springer correspondence for
$\Lg$ (resp. $\Lg^*$) is a bijective map
$$\gamma:\mathfrak{A}_\Lg\xrightarrow{\sim} \mathbf{W}_n^\wedge\text{ (resp. }
\gamma':\mathfrak{A}_{\Lg^*}\xrightarrow{\sim}
\mathbf{W}_n^\wedge).$$

Assume $G=SO(2n)$. The Springer correspondence for $\Lg$ (or
$\Lg^*$) is a bijective map
\begin{equation}\label{sp-1-o}\gamma:\mathfrak{A}_\Lg\cong\mathfrak{A}_{\Lg^*}\xrightarrow{\sim} (\mathbf{W}_n')^\wedge.\end{equation}
 Let
$\tilde{G}=O(2n)$. The group $\tilde{G}/G$ acts on
$\mathfrak{A}_{\Lg}$ and on the set of all nilpotent $G$-orbits in
$\Lg$. An element in $\mathfrak{A}_\Lg$ or a nilpotent orbit in
$\Lg$ is called non-degenerate (resp. degenerate) if it is fixed
(resp. not fixed) by this action. Then $(x,\phi)\in\mathfrak{A}_\Lg$
is degenerate if and only if $x$ is degenerate, in this case
$A_{G}(x)=1$ and thus $\phi=1$. Let $\tilde{\mathfrak{A}}_\Lg$ be
the quotient of $\mathfrak{A}_\Lg$ by $\tilde{G}/G$. Then
(\ref{sp-1-o}) induces a bijection
\begin{equation}\label{sp-2-o}\tilde{\gamma}:\tilde{\mathfrak{A}}_\Lg\xrightarrow{\sim} {\mathbf{W}_n^\wedge}'.\end{equation}

\subsection{}\label{ss1-2}  Assume $G=SO(V)$. Let
$\tilde{G}=O({V})$. Note that $\tilde{G}\neq G$ if and only if
$\dim(V)$ is even. Let $\Sigma\subset {V}$ be a line such that
$\alpha|_{\Sigma}=0$. Let $\tilde{P}$ be the stabilizer of $\Sigma$
in $\tilde{G}$ and $P$ the identity component of $\tilde{P}$. Then
$P$ is a parabolic subgroup of $G$. Let $L$ be a Levi subgroup of
$P$ and $\tilde{L}=N_{\tilde{P}}(L)$. Let $U_P$, $\Lp$, $\Ln_P$ be
as in \ref{ss1-1}. Then $\tilde{P}=\tilde{L}U_P$ and
$L=\tilde{L}^0$. Fix a Borel subgroup $B\subset P$ and let
$\tilde{B}=N_{\tilde{G}}(B)$. Denote
$\tilde{\cB}=\{g\tilde{B}g^{-1}|g\in \tilde{G}\}$,
$\tilde{\cP}=\{g\tilde{P}g^{-1}|g\in \tilde{G}\}$.

Let $x\in\Lg$ be nilpotent. Define
$\tilde{\cB}_x=\{g\tilde{B}g^{-1}\in\tilde{\cB}|\Ad(g^{-1})(x)\in\Lb\}$
and
$\tilde{\cP}_x=\{g\tilde{P}g^{-1}\in\tilde{\cP}_x|\Ad(g^{-1})(x)\in\Lp\}$.
The natural morphism
$\varrho_x:\tilde{\cB}_x\rightarrow\tilde{\cP}_x$,
$g\tilde{B}g^{-1}\rightarrow g\tilde{P}g^{-1}$ is $Z_{\tilde{G}}(x)$
equivariant. We have a well defined map
$$f_x:\tilde{\cP}_x\rightarrow\mathcal{CN}(\Lp/\Ln_P),
\ g\tilde{P}g^{-1}\mapsto\text{ orbit of }Ad(g^{-1})x+\Ln_P,$$ where
$\mathcal{CN}(\Lp/\Ln_P)$ is the set of nilpotent
$\tilde{P}/U_P$-orbits in $\Lp/\Ln_P$. Let $\rc'\in
f_x(\tilde{\cP}_x)$ be a nilpotent orbit. Define
$\mathbf{Y}=f_x^{-1}(\rc')$ and
$\mathbf{X}=\varrho_x^{-1}(\mathbf{Y})$.

We can assume $\tilde{P}\in \mathbf{Y}$. We identify $\tilde{L}$
with $\tilde{P}/U_P$, $\Ll$ with $\Lp/\Ln_P$. Let $x'$ be the image
of $x$ in $\Ll$ and
$\tilde{A}'(x')=A_{\tilde{L}}(x')=Z_{\tilde{L}}(x')/Z_{\tilde{L}}^0(x'),
H=Z_{\tilde{G}}(x)\cap
\tilde{P}=Z_{\tilde{P}}(x),K=Z_{\tilde{G}}^0(x)\cap \tilde{P}.$ The
natural morphisms $H\rightarrow Z_{\tilde{G}}(x)$, $H\rightarrow
Z_{\tilde{L}}(x')$ and $K\rightarrow Z_{\tilde{L}}(x')$ induce
morphisms $H\rightarrow A_{\tilde{G}}(x)$, $H\rightarrow
A_{\tilde{L}}(x')$ and $K\rightarrow A_{\tilde{L}}(x')$. Let
$\tilde{A}_P$ be the image of $H$ in $A_{\tilde{G}}(x)$ and
$\tilde{A}_P'$ be the image of $K$ in $A_{\tilde{L}}(x')$. Then we
have a natural morphism $\tilde{A}_P\to
\tilde{A}'(x')/\tilde{A}'_P$.

If $G=\tilde{G}$, then we omit the tildes from the notations, for
example, $A_P=\tilde{A}_P$ and etc.

\subsection{}\label{ssec-sxx}
We preserve the notations in \ref{ss1-2}. Let $\tilde{Y}_{x,x'}$ and
$\tilde{S}_{x,x'}$ be defined as in \ref{sec-res} replacing $G$ by
$\tilde{G}$ and $L$ by $\tilde{L}$. Note that
$\tilde{S}_{x,x'}\neq\emptyset$ if and only if $\dim
\bX=\dim\tilde{\cB}_x$, where $\bX$ is defined as in \ref{ss1-2}
with $\rc'$ the orbit of $x'$. If $\tilde{S}_{x,x'}\neq\emptyset$,
then $\tilde{Y}_{x,x'}$ is a single orbit under the action of
$Z_{\tilde{G}}(x)\times Z_{\tilde{L}}(x')U_P$ (see Proposition
\ref{prop-gps}). It follows that $\tilde{S}_{x,x'}$ is a single
$A_{\tilde{G}}(x)\times A_{\tilde{L}}(x')$-orbit. Hence
$\tilde{S}_{x,x'}=A_{\tilde{G}}(x)\times
A_{\tilde{L}}(x')/\tilde{H}_{x,x'}$ for some subgroup
$\tilde{H}_{x,x'}\subset A_{\tilde{G}}(x)\times A_{\tilde{L}}(x')$.
The subgroup $\tilde{H}_{x,x'}$ is described as follows.

If $A,B$ are groups, a subgroup $C$ of $A\times B$ is characterized
by the triple $(A_0,B_0,h)$ where $A_0=\text{pr}_1(C)$, $B_0=B\cap
C$ and $h:A_0\rightarrow N_B(B_0)/B_0$ is defined by $a\mapsto bB_0$
if $(a,b)\in C$. Then $\tilde{H}_{x,x'}$ is characterized by the
triple $(\tilde{A}_P,\tilde{A}_P',h)$, where $h$ is the natural
morphism $\tilde{A}_P\to A_{\tilde{L}}(x')/\tilde{A}'_P$ described
in \ref{ss1-2}.

Assume $G=SO(2n)$. The subset $S_{x,x'}$ of $\tilde{S}_{x,x'}$ is
the image in $\tilde{S}_{x,x'}$ of the subgroup of
$A_{\tilde{G}}(x)\times A_{\tilde{L}}(x')$ consisting of the
elements that can be written as a product of even number of
generators. This is also the image of $A_{{G}}(x)\times
A_{{L}}(x')$.

\subsection{}\label{ssec-gps-d}Assume $G=Sp(V)$, or $SO(V)$ with $\dim V$ odd.
The definitions in \ref{ss1-2} apply to $\Lg^*$ (if $G=Sp(V)$, there
are no conditions on the line $\Sigma$). Let
$\varrho_\xi,f_\xi,{A}_P,{A}_P'$ etc. be defined in this way. Then
${Y}_{\xi,\xi'}$, ${S}_{\xi,\xi'}$ are described in the same way as
${Y}_{x,x'}$, ${S}_{x,x'}$ in \ref{ssec-sxx}.

\subsection{}\label{ss1-3}
The correspondence for symplectic Lie algebras is determined by
Spaltenstein \cite{Spal} since in this case the centralizer of a
nilpotent element is connected and $\mathfrak{A}_\Lg=\{(\rc,1)\}$.
We rewrite his results in section \ref{sec-sy} using different
combinatorics and describe the Springer correspondence for
orthogonal Lie algebras in section \ref{sec-scor}.  The proof will
essentially be as in \cite{Lu1}, which is based on the restriction
formula $(\mathbf{R})$ and the following observation of Shoji: if
$n\geq 3$, an irreducible character of $\mathbf{W}_n$ (resp. a
nondegenerate irreducible character of $\mathbf{W}_n'$) is
completely determined by its restriction to $\mathbf{W}_{n-1}$
(resp. $\mathbf{W}_{n-1}'$). We need to study the representations
$\varepsilon_{x,x'}$, which require a description of the groups
$\tilde{A}_P$ and $\tilde{A}_P'$. Extending some methods in
\cite{Spal2}, we describe these groups for orthogonal Lie algebras,
duals of symplectic Lie algebras and duals of odd orthogonal Lie
algebras in section \ref{sec-or}, \ref{sec-dsp} and \ref{sec-dor}
respectively.

\subsection{} The Springer correspondence for
the duals of symplectic Lie algebras and orthogonal Lie algebras is
described in section \ref{sec-d-c}. The proofs are very similar to
the Lie algebra case and we omit much detail.

\section{Restriction formula}\label{sec-rf}

Assume $G$ is adjoint. Fix a Borel subgroup $B$ of $G$ and a maximal
torus $T\subset B$. Let $\cB$ be the variety of Borel subgroups of
$G$. A proof of the restriction formula in unipotent case is given
in \cite{Lu1}. The proof for nilpotent case is essentially the same.
For completeness, we include the proof here.

\subsection{}\label{prop-dim}We prove first a dimension formula following \cite{Lu1}. Let $\mathcal{P}$ be a $G$-conjugacy class of
parabolic subgroups of $G$.  For $P\in\cP$, let $\bar{P}=P/U_P$,
$\bar{\Lp}=\Lp/\Ln_P$ and $\pi_{\Lp}:\Lp\rightarrow\bar{\Lp}$ be the
natural projection. Let $\rc$ be a nilpotent $G$-orbit in $\Lg$.
Assume for each $P\in\cP$,  given a nilpotent $\bar{P}$-orbit
$\rc_{\bar{\Lp}}\subset\bar{\Lp}$ with the following property: for
any $P_1,P_2\in\cP$ and any $g\in G$ such that $P_2=gP_1g^{-1}$, we
have
$\pi_{\Lp_2}^{-1}(\rc_{\bar{\Lp}_2})=\Ad(g)(\pi_{\Lp_1}^{-1}(\rc_{\bar{\Lp}_1}))$.
Let
\begin{eqnarray*}
&&Z'=\{(x,P_1,P_2)\in
\mathfrak{g}\times\cP\times\cP|x\in\pi_{\Lp_1}^{-1}(\rc_{\bar{\Lp}_1})
\cap\pi_{\Lp_2}^{-1}(\rc_{\bar{\Lp}_2})\}.
\end{eqnarray*}
We have a partition
$Z'=\cup_{\mathcal{\mathcal{O}}}Z'_{\mathcal{\mathcal{O}}}$, where
${\mathcal{\mathcal{O}}}$ runs through the $G$-orbits  on
$\cP\times\cP$ and $Z'_\mathcal{O}=\{(x,P_1,P_2)\in
Z'|(P_1,P_2)\in\mathcal{O}\}$.

We denote $\nu_G$ the number of positive roots in $G$ and set
$\bar{\nu}=\nu_{\bar{P}}$ $(P\in\cP)$. Let $c=\dim \rc$ and
$\bar{c}=\dim \rc_{\bar{\Lp}}$ for $P\in\cP$.

\begin{prop}
$\mathrm{(i)}$ Given $P\in\cP$ and $\bar{x}\in \rc_{\bar{\Lp}}$, we
have
$\dim(\rc\cap\pi_{\Lp}^{-1}(\bar{x}))\leq\frac{1}{2}(c-\bar{c})$.

$\mathrm{(ii)}$ Given $x\in \rc$, we have
$\dim\{P\in\cP|x\in\pi_{\Lp}^{-1}(\rc_{\bar{\Lp}})\}\leq(\nu_G-\frac{c}{2})-(\bar{\nu}-\frac{\bar{c}}{2})$.

$\mathrm{(iii)}$ If $d_0=2\nu_G-2\bar{\nu}+\bar{c}$, then $\dim
Z'_\mathcal{O}\leq d_0$ for all $\mathcal{O}$. Hence $\dim Z'\leq
d_0$.
\end{prop}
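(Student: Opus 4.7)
The plan is to prove (ii) directly via a Springer-fiber dimension count, and then to deduce (i) and (iii) as formal consequences. Write $\cP_x=\{P'\in\cP\mid x\in\pi_{\Lp'}^{-1}(\rc_{\bar{\Lp}'})\}$ for the parabolic Springer fiber, and let $\cB_x=\{B\in\cB\mid x\in\Lb\}$ be the Springer fiber. I would introduce the subvariety $\cB_x'\subseteq\cB_x$ consisting of those Borels $B$ for which the unique $P(B)\in\cP$ containing $B$ lies in $\cP_x$. The natural projection $\cB_x'\to\cP_x$ is surjective with fiber over $P'$ equal to $\{B\subset P'\mid x\in\Lb\}$; decomposing $x\in\Lp'$ with respect to the Levi $L'$ identifies this fiber with the Springer fiber of $\bar x:=\pi_{\Lp'}(x)\in\rc_{\bar{\Lp}'}$ inside $L'$, so it has dimension $\bar\nu-\bar c/2$ by the Springer-fiber dimension formula applied to $L'$. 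Consequently $\dim\cB_x'=\dim\cP_x+\bar\nu-\bar c/2$; combining with $\cB_x'\subseteq\cB_x$ and the formula $\dim\cB_x=\nu_G-c/2$ (valid for $\Lg$ of classical type in characteristic $2$ by \cite{X1}) yields (ii).

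For (i), I would consider the auxiliary variety $\tilde X=\{(y,P')\in\rc\times\cP\mid y\in\pi_{\Lp'}^{-1}(\rc_{\bar{\Lp}'})\}$. The projection $\tilde X\to\rc$ has fibers $\cP_y$, all of dimension $\dim\cP_x$ by $G$-equivariance; the projection $\tilde X\to\cP$ has fibers $\rc\cap\pi_{\Lp'}^{-1}(\rc_{\bar{\Lp}'})$, which by $\bar P$-equivariance themselves fiber over $\rc_{\bar{\Lp}'}$ with equidimensional fibers $\rc\cap\pi_{\Lp'}^{-1}(\bar x)$. Equating the two resulting formulas for $\dim\tilde X$ and substituting (ii) gives (i). For (iii), I would project $Z'_\mathcal{O}\to\Lg$; the fiber over $x$ sits inside $\cP_x\times\cP_x$, so
\[
\dim Z'_\mathcal{O}\;\leq\;\max_{\rc}\bigl(c+2\dim\cP_x\bigr)\;\leq\;\max_{\rc}\bigl(c+2(\nu_G-\bar\nu)-(c-\bar c)\bigr)\;=\;d_0,
\]
where the maximum runs over $G$-orbits $\rc$ meeting the image of the projection.

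The main obstacle is the Springer-fiber dimension formula $\dim\cB_x=\nu_G-c/2$ in characteristic $2$: centralizers of nilpotent elements in $\Lg$ need not be smooth, so this equality requires genuine input rather than the usual Steinberg argument. It is precisely what \cite{X1} establishes in building the Springer correspondence for $\Lg$ of classical type in characteristic $2$, and with that in hand the whole argument proceeds in direct parallel with Lusztig's proof of the analogous unipotent statement in \cite{Lu1}.
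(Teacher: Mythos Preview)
Your argument is correct and takes a genuinely different route from the paper. The paper proceeds by induction on $\dim G$: it proves (iii) first, by fixing $(P',P'')\in\cO$, choosing Levi subgroups $L',L''$ sharing a maximal torus, and reducing the fiber dimension estimate to two instances of (i) for the smaller groups $L'$ and $L''$ (this is the core computation, equations (3.2)--(3.5)); then (ii) is read off from (iii) by projecting $Z'(\rc)\to\rc$, and (i) is deduced from (ii) via the incidence variety $\{(x,P)\in\rc\times\cP\mid x\in\pi_\Lp^{-1}(\rc_{\bar\Lp})\}$ exactly as you do. So your (ii)$\Rightarrow$(i) step matches the paper's, but your starting point and your treatment of (iii) are different: you bypass the inductive analysis of $\Lp'\cap\Lp''$ entirely by invoking the Springer-fiber dimension formula for $G$ and for the Levi, then get (iii) as a trivial consequence of (ii).

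What each approach buys: the paper's proof is self-contained and valid for an arbitrary $G$-orbit $\rc$ and arbitrary $\bar P$-orbits $\rc_{\bar\Lp}$, which is the generality in which the proposition is stated (and is convenient because the inductive step naturally lands in possibly non-nilpotent orbits $\hat\rc_i$ in $\Ll'\cap\Ll''$). Your proof is shorter but imports the equality $\dim\cB_x=\nu_G-c/2$ from \cite{X1}; note that you genuinely need the \emph{equality} for the Levi (not just the semi-smallness inequality), since it is the lower bound on the fiber dimension of $\cB_x'\to\cP_x$ that forces the upper bound on $\dim\cP_x$. This is available once the Springer correspondence of \cite{X1} is known to be a bijection (so every nilpotent stratum is relevant), and there is no circularity: the present proposition is used only to prove the restriction formula, not to construct the correspondence itself. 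One should observe, however, that your argument as written covers the case where the $\rc_{\bar\Lp}$ are nilpotent (so that every $x$ in the image of $Z'\to\Lg$ is nilpotent, as follows from the block-upper-triangular description of $\Lp$); this is all that the rest of the paper needs, but it is slightly less than what the proposition asserts.
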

\begin{proof}
We prove the proposition by induction on the dimension of the group.
Assume $\cP=\{G\}$, the proposition is clear. Thus we can assume
that $\cP$ is a class of proper parabolic subgroups of $G$ and that
the proposition holds when $G$ is replaced by a group of strictly
smaller dimension.

Consider the map $Z'_{\cO}\rightarrow\cO$, $(x,P_1,P_2)\mapsto
(P_1,P_2)$. We see that proving (iii) for $Z'_{\cO}$ is the same as
proving that for a fixed $(P',P'')\in\cO$, we have
\begin{eqnarray}
&&\dim \pi_{\Lp'}^{-1}(\rc_{\bar{\Lp}'})
\cap\pi_{\Lp''}^{-1}(\rc_{\bar{\Lp}''})\leq
2\nu_G-2\bar{\nu}+\bar{c}-\dim\cO.\label{d-2}
\end{eqnarray}
Choose Levi subgroups $L'$ of $P'$ and $L''$ of $P''$ such that $L'$
and $L''$ contain a common maximal torus. An element in $\Lp'\cap\Lp''$ can be written both in the form
$x'+n'$ ($x'\in\Ll',n'\in\Ln_{P'}$) and in the form $x''+n''$
($x''\in\Ll'',n''\in\Ln_{P''}$). It is easy to see that there are
unique elements
$z\in\Ll'\cap\Ll'',u''\in\Ll'\cap\Ln_{P''},u'\in\Ll''\cap\Ln_{P'}$,
such that $x'=z+u'',x''=z+u'$. Hence (\ref{d-2}) is equivalent to
{\small\begin{eqnarray}\label{d-6}
&&\dim\{(n',n'',u'',u',z)\in\Ln_{P'}\times\Ln_{P''}\times(\Ll'\cap\Ln_{P''})\times(\Ll''\cap\Ln_{P'})\times(\Ll'\cap\Ll'')|
\  u''+n'=u'+n'',\\
&&z+u''\in\rc_{\bar{\Lp}'},z+u'\in\rc_{\bar{\Lp}''}\}\leq
2\nu_G-2\bar{\nu}+\bar{c}-\dim\cO\nonumber.
\end{eqnarray}}
(We identify $\Ll'=\bar{\Lp}',\Ll''=\bar{\Lp}''$, and then view
$\rc_{\bar{\Lp}'}\subset\Ll',\rc_{\bar{\Lp}''}\subset\Ll''$.) When
$(u'',u')\in(\Ll'\cap\Ln_{P''})\times(\Ll''\cap\Ln_{P'})$ is fixed,
the variety $\{(n',n'')\in\Ln_{P'}\times\Ln_{P''}|u''+n'=u'+n''\}$
is isomorphic to $\Ln_{P'}\cap\Ln_{P''}$. Since
$\dim(\Ln_{P'}\cap\Ln_{P''})=2\nu_G-2\bar{\nu}-\dim\cO$, we see that
(\ref{d-6}) is equivalent to
\begin{eqnarray}\label{d-5}
&&\dim\{(u'',u',z)\in(\Ll'\cap\Ln_{P''})\times(\Ll''\cap\Ln_{P'})\times(\Ll'\cap\Ll'')|\
z+u''\in \rc_{\bar{\Lp}'},z+u'\in \rc_{\bar{\Lp}''}\}\leq \bar{c}.
\end{eqnarray}

By the finiteness of the number of nilpotent orbits, the projection
$\text{pr}_3$ of the variety in (\ref{d-5}) on the $z$-coordinate is
a union of finitely many orbits
$\hat{\rc}_1\cup\hat{\rc}_2\cup\cdots\cup\hat{\rc}_m$ in
$\Ll'\cap\Ll''$ (note that $z$ is nilpotent). The inverse image
under $\text{pr}_3$ of a point $z\in\hat{\rc}_i$ is a product of two
varieties of the type considered in (i) for a smaller group ($G$
replaced by $L'$ or $L''$), thus by the induction hypothesis it has
dimension
$\leq\frac{1}{2}(\bar{c}-\dim\hat{\rc}_i)+\frac{1}{2}(\bar{c}-\dim\hat{\rc}_i)$.
Hence $\dim\text{pr}_3^{-1}(\hat{\rc}_i)\leq\bar{c}$, $\forall\
1\leq i\leq m$. Then (\ref{d-5}) holds. This proves (iii).

We show that (ii) is a consequence of (iii). Let $Z'(\rc)$ be the
subset of $Z'$ defined by $Z'(\rc)=\{(x,P_1,P_2)\in Z'|x\in \rc\}$.
If $Z'(\rc)$ is empty then the variety in (ii) is empty and (ii)
follows. Hence we may assume that $Z'(\rc)$ is non-empty. From
(iii), we have $\dim Z'(\rc)\leq d_0$. Consider the map
$Z'(\rc)\rightarrow \rc,\ (x,P_1,P_2)\mapsto x$. Each fiber of this
map is a product of two copies of the variety in (ii). It follows
that the variety in (ii) has dimension equal to $\frac{1}{2}(\dim
Z'(\rc)-\dim
\rc)\leq\frac{1}{2}(d_0-c)=\nu_G-\bar{\nu}+\frac{\bar{c}}{2}-\frac{c}{2}$.
Then (ii) follows.

We show that (i) is a consequence of (ii). Consider the variety
$\{(x,P)\in \rc\times\cP|x\in\pi_{\Lp}^{-1}(\rc_{\bar{\Lp}})\}$. By
projecting it to the $x$-coordinate and using (ii), we see that it
has dimension $\leq\nu_G-\bar{\nu}+\frac{\bar{c}}{2}+\frac{c}{2}$.
If we project it to the $P$-coordinate, each fiber is isomorphic to
the variety $\rc\cap\pi_{\Lp}^{-1}(\rc_{\bar{\Lp}})$. Hence
$\dim(\rc\cap\pi_{\Lp}^{-1}(\rc_{\bar{\Lp}}))\leq\nu_G-\bar{\nu}
+\frac{\bar{c}}{2}+\frac{c}{2}-\dim\cP=\frac{c+\bar{c}}{2}$. Now
$\pi_{\Lp}$ maps $\rc\cap\pi_{\Lp}^{-1}(\rc_{\bar{\Lp}})$ onto
$\rc_{\bar{\Lp}}$ and each fiber is the variety in (i). Hence the
variety in (i) has dimension
$\leq\frac{c+\bar{c}}{2}-\bar{c}=\frac{c-\bar{c}}{2}$. The
proposition is proved.
\end{proof}

\subsection{}
Let $P\supset B$ be a parabolic subgroup of $G$ with Levi subgroup
$L$ such that $T\subset L$. Let $\mathbf{W}_L=N_L(T)/T$. Then
$\bar{\bQ}_l[\mathbf{W}_L]$ is in a natural way a subalgebra of
$\bar{\bQ}_l[\mathbf{W}_G]$.

Recall that we have the map (see \cite{X1})
\begin{equation*}
\pi:\widetilde{Y}=\{(x,gT)\in Y\times
G/T|\Ad(g^{-1})(x)\in\Lt_0\}\rightarrow Y, (x,gT)\mapsto x,
\end{equation*}
where $Y$, $\Lt_0$ is the set of regular semisimple elements in
$\Lg$, $\Lt$ respectively. Let \begin{equation*}Y_L=\bigcup_{g\in
L}\Ad(g)\Lt_{0},\ \widetilde{Y}_1=\{(x,gL)\in\Lg\times
G/L|\Ad(g^{-1})(x)\in Y_L\}. \end{equation*} Then $\pi$ factors as
$\widetilde{Y}\xrightarrow{\pi'}\widetilde{Y}_1\xrightarrow{\pi''}Y$,
where $\pi'$ is $(x,gT)\mapsto(x,gL)$ and $\pi''$ is $(x,gL)\mapsto
x$.  The map $\pi':\widetilde{Y}\rightarrow \widetilde{Y}_1$ is  a
principal bundle with group $\mathbf{W}_L$. It follows that $\End
(\pi'_!\bar{\bQ}_{l\widetilde{Y}})=\bar{\bQ}_l[\mathbf{W}_L]$ and
that we have a canonical decomposition
\begin{equation}
\pi'_!\bar{\bQ}_{l\widetilde{Y}}=\bigoplus_{\rho'\in
\mathbf{W}_L^{\wedge}}(\rho'\otimes(\pi'_!\bar{\bQ}_{l\widetilde{Y}})_{\rho'}),
\end{equation}
where
$(\pi'_!\bar{\bQ}_{l\widetilde{Y}})_{\rho'}=\Hom_{\bar{\bQ}_l[\mathbf{W}_L]}(\rho',\pi'_!\bar{\bQ}_{l\widetilde{Y}})$
is an irreducible local system on $\widetilde{Y}_1$. Recall that
\begin{equation*}
\pi_!\bar{\bQ}_{l\widetilde{Y}}=\bigoplus_{\rho\in
\mathbf{W}_G^{\wedge}}(\rho\otimes(\pi_!\bar{\bQ}_{l\widetilde{Y}})_{\rho}),
\end{equation*}
where
$(\pi_!\bar{\bQ}_{l\widetilde{Y}})_{\rho}=\Hom_{\bar{\bQ}_l[\mathbf{W}_G]}(\rho,\pi_!\bar{\bQ}_{l\widetilde{Y}})$
is an irreducible local system on $Y$. We have
\begin{equation*}
\pi_!\bar{\bQ}_{l\widetilde{Y}}=\pi''_!(\pi'_!\bar{\bQ}_{l\widetilde{Y}})
=\bigoplus_{\rho'\in
\mathbf{W}_L^{\wedge}}(\rho'\otimes\pi''_!((\pi'_!\bar{\bQ}_{l\widetilde{Y}})_{\rho'}))
\end{equation*}
hence
\begin{equation*}
\pi''_!((\pi'_!\bar{\bQ}_{l\widetilde{Y}})_{\rho'})=
\Hom_{\bar{\bQ}_l[\mathbf{W}_L]}(\rho',\pi_!\bar{\bQ}_{l\widetilde{Y}})
=\Hom_{\bar{\bQ}_l[\mathbf{W}_L]}(\rho',\bigoplus_{\rho\in
\mathbf{W}_G^{\wedge}}(\rho\otimes(\pi_!\bar{\bQ}_{l\widetilde{Y}})_{\rho})).
\end{equation*}
We see that for any $\rho'\in \mathbf{W}_L^{\wedge}$,
\begin{equation}\label{e-4}
\pi''_!((\pi'_!\bar{\bQ}_{l\widetilde{Y}})_{\rho'})=\bigoplus_{\rho\in
\mathbf{W}_G^{\wedge}}
((\pi_!\bar{\bQ}_{l\widetilde{Y}})_{\rho}\otimes\Hom_{\bar{\bQ}_l[\mathbf{W}_L]}(\rho',\rho)).
\end{equation}

\subsection{}
Recall that we have the map (see \cite{X1})
$$\varphi:X=\{(x,gB)\in\Lg\times G/B|\Ad(g^{-1})(x)\in\Lb\}\rightarrow\Lg,(x,gB)\mapsto x.$$ Let
$X_1=\{(x,gP)\in\Lg\times G/P|\Ad(g^{-1})(x)\in\Lp\}$. Then
$\varphi$ factors as
$X\xrightarrow{\varphi'}X_1\xrightarrow{\varphi''}\Lg$ where
 $\varphi'$ is $(x,gB)\mapsto(x,gP)$ and $\varphi''$ is
$(x,gP)\mapsto x$. The maps $\varphi',\varphi''$ are proper and
surjective. We have a commutative diagram $$\CD
  \widetilde{Y}  @>\pi'>> \widetilde{Y}_1 @>\pi''>> Y \\
  @V j_0 VV @V j_1 VV @V j_2 VV  \\
  X @>\varphi'>> X_1 @>\varphi''>> \Lg
\endCD$$
where $j_2$ is $x\mapsto x$, $j_0$ is $(x,gT)\mapsto(x,gB)$ (an
isomorphism of $\widetilde{Y}$ with the open subset
$\varphi^{-1}(Y)$ of $X$) and $j_1$ is $(x,gL)\mapsto (x,gP)$ (an
isomorphism onto the open subset $\varphi''^{-1}(Y)$ of $X_1$). Note
also that $\widetilde{Y}_1$ is smooth (since $\widetilde{Y}$ is
smooth). We identify $\widetilde{Y},\widetilde{Y}_1$ with open
subsets of $X,X_1$ via the maps $j_0,j_1$ respectively. Let
$$X_L=\{(x,g(B\cap L))\in\Ll\times L/(B\cap
L)|\Ad(g^{-1})(x)\in\Lb\cap\Ll\},$$ $$X''=\{(g_1,x,pB)\in G\times
\Lp\times P/B|\Ad(p^{-1})(x)\in\Lb\}.$$
 We have a
commutative diagram with cartesian squares
$$\CD
  X @<p_1<< X'' @>p_2>> X_L \\
  @V \varphi' VV @V \phi VV @V \varphi_L VV  \\
  X_1 @<p_3<< G\times\Ln_{P}\times\Ll @>p_4>> \Ll
\endCD$$
where

$p_1$ is $(g_1,x,pB)\mapsto(\Ad(g_1)(x),g_1pB)$, a principal
$P$-bundle,

$p_2$ is $(g_1,l+n,g'B)\mapsto(l,g'(B\cap L))$ with
$l\in\Ll,n\in\Ln_{P},g'\in L$, a principal $G\times\Ln_{P}$-bundle,

$p_3$ is $(g_1,n,l)\mapsto(\Ad(g_1)(l+n),g_1P)$, a principal
$P$-bundle,

$p_4$ is $(g_1,n,l)\mapsto l$, a principal $G\times\Ln_{P}$-bundle,

$\varphi_L$ is $(x,g(B\cap L))\mapsto x$,

$\phi$ is $(g_1,l+n,g'B)\mapsto(g_1,n,l)$, with
$l\in\Ll,n\in\Ln_{P},g'\in L$.

Let $\pi_L$ be the map $\widetilde{Y}_L=\{(x,gT)\in\Ll\times
L/T|\Ad(g^{-1})(x)\in\Lt_{0}\}\rightarrow Y_L, (x,gL)\mapsto x$.
Since $p_3,p_4$ are principal bundles with connected groups, we have
$p_3^*IC(X_1,\pi'_!\bar{\bQ}_{l\widetilde{Y}})=p_4^*IC(\Ll,\pi_{L!}\bar{\bQ}_{l\widetilde{Y}_L})$
(both can be identified with
$IC(G\times\Ln_P\times\Ll,p_4^*\pi_{L!}\bar{\bQ}_{l\widetilde{Y}_L})$).
From the commutative diagram above it follows that
$p_3^*\varphi'_!\bar{\bQ}_{lX}=\phi_!p_1^*\bar{\bQ}_{lX}=\phi_!p_2^*\bar{\bQ}_{lX_L}
=p_4^*\varphi_{L!}\bar{\bQ}_{lX_L}=p_4^*IC(\Ll,\pi_{L!}\bar{\bQ}_{l\widetilde{Y}_L})$
(the last equality comes from  \cite[Proposition 6.6]{X1} for $L$
instead of $G$), hence
$p_3^*\varphi'_!\bar{\bQ}_{lX}=p_3^*IC(X_1,\pi'_!\bar{\bQ}_{l\widetilde{Y}})$.
Since $p_3$ is a principal $P$-bundle we see that
$$\varphi'_!\bar{\bQ}_{lX}=IC(X_1,\pi'_!\bar{\bQ}_{l\widetilde{Y}}).$$
It follows that
$\End(\varphi'_!\bar{\bQ}_{lX})\cong\bar{\bQ}_{l}[\mathbf{W}_L]$ and
$\varphi'_!\bar{\bQ}_{lX}=\bigoplus_{\rho'\in
\mathbf{W}_L^{\wedge}}(\rho'\otimes(\varphi'_!\bar{\bQ}_{lX})_{\rho'})$
where
\begin{equation}\label{e-1}
(\varphi'_!\bar{\bQ}_{lX})_{\rho'}=IC(X_1,(\pi'_!\bar{\bQ}_{l\widetilde{Y}})_{\rho'}).
\end{equation}
Next we show that
\begin{equation}\label{e-2}
\varphi''_!((\varphi'_!\bar{\bQ}_{lX})_{\rho'})=IC(\Lg,\pi''_!((\pi'_!\bar{\bQ}_{l\widetilde{Y}})_{\rho'})),\text{
for any }\rho'\in \mathbf{W}_L^{\wedge}.
\end{equation}
From (\ref{e-1}) we see that the restriction of
$\varphi''_!((\varphi'_!\bar{\bQ}_{lX})_{\rho'})$ to $Y$ is the
local system $\pi''_!(\pi'_!(\bar{\bQ}_{l\widetilde{Y}})_{\rho'})$.
Since $\varphi''$ is proper, (\ref{e-2}) is a consequence of
(\ref{e-1}) and the following assertion:
\begin{equation}\label{e-3}
\text{For any }i>0,
\dim\supp\cH^i(\varphi''_!((\varphi'_!\bar{\bQ}_{lX})_{\rho'}))<\dim\Lg-i.
\end{equation}
We have
$\supp\cH^i(\varphi''_!((\varphi'_!\bar{\bQ}_{lX})_{\rho'}))\subset\supp\cH^i(\varphi''_!(\varphi'_!\bar{\bQ}_{lX}))
=\supp\cH^i(\varphi_!\bar{\bQ}_{lX})$, thus (\ref{e-3}) follows from
the proof of \cite[Proposition 6.6]{X1}. Hence (\ref{e-2}) is
verified. Combining (\ref{e-2}) with (\ref{e-4}), we see that for
any $\rho'\in \mathbf{W}_L^{\wedge}$,
\begin{equation}\label{e-5}
\varphi''_!((\varphi'_!\bar{\bQ}_{lX})_{\rho'})\cong\bigoplus_{\rho\in
\mathbf{W}_G^{\wedge}}
((\varphi_!\bar{\bQ}_{lX})_{\rho}\otimes\Hom_{\bar{\bQ}_{l}[\mathbf{W}_L]}(\rho',\rho)).
\end{equation}
(Recall that we have $\varphi_!\bar{\bQ}_{lX}=\bigoplus_{\rho\in
\mathbf{W}_G^{\wedge}}(\rho\otimes(\varphi_!\bar{\bQ}_{lX})_{\rho})$
and
$(\varphi_!\bar{\bQ}_{lX})_{\rho}=IC(\Lg,(\pi_!\bar{\bQ}_{l\widetilde{Y}})_{\rho})$.)

\subsection{}\label{prop-fivenumber}
Let $(\mathrm{c},\cF)\in\mathfrak{A}_\Lg$ correspond to $\rho\in
\mathbf{W}_G^{\wedge}$ and $(\mathrm{c}',\cF')\in\mathfrak{A}_\Ll$
correspond to $\rho'\in \mathbf{W}_L^{\wedge}$ under Springer
correspondence. Let $X_1^\omega=\{(x,gP)\in X_1|x\text{
nilpotent}\}$,
$$R=\{(x,gP)\in\Lg\times(G/P)|\Ad(g^{-1})(x)\in\bar{\rc'}+\Ln_P\}\subset X_1^\omega.$$
We show that
\begin{equation}\label{y-2}
\supp(\varphi'_!\bar{\bQ}_{lX})_{\rho'}\cap X_1^\omega\subset R.
\end{equation}
Let $(x,gP)\in\supp(\varphi'_!\bar{\bQ}_{lX})_{\rho'}\cap
X_1^\omega$. The isomorphism
$p_3^*\varphi'_!\bar{\bQ}_{lX}=p_4^*\varphi_{L!}\bar{\bQ}_{lX_L}$ is
compatible with the action of $\mathbf{W}_L$. Thus
$p_3^*(\varphi'_!\bar{\bQ}_{lX})_{\rho'}=p_4^*(\varphi_{L!}\bar{\bQ}_{lX_L})_{\rho'}$
and $p_3^{-1}(\supp\
(\varphi'_!\bar{\bQ}_{lX})_{\rho'})=p_4^{-1}(\supp
(\varphi_{L!}\bar{\bQ}_{lX_L})_{\rho'}).$ Hence there exists
$(g_1,n,l)\in G\times\Ln_P\times\Ll$ such that
$(x,gP)=(\Ad(g_1)(n+l),g_1P)$ and $l\in \supp
(\varphi_{L!}\bar{\bQ}_{lX_L})_{\rho'}$. Since $x$ is nilpotent,
$n+l$ is nilpotent and thus $l$ is nilpotent. Hence $l\in\bar{\rc'}$
since by \cite[Proposition 6.6]{X1} (for $L$ instead of $G$),
\begin{eqnarray}\label{i-1}
&&(\varphi_{L!}\bar{\bQ}_{lX_L})_{\rho'}|_{\cN_L}\text{ is
}IC(\bar{\rc'},\cF')[\dim\rc'-2\nu_L](\text{ extend by zero outside
}\bar{\rc'}),
\end{eqnarray}
where $\cN_L$ is the nilpotent variety of $\Ll$. We have $g=g_1p$
for some $p\in P$ and $x=\Ad(g_1)(n+l)$, hence
$\Ad(g^{-1})(x)=\Ad(p^{-1})(n+l)\in\bar{\rc'}+\Ln_P$ and $(x,gP)\in
R$. This proves (\ref{y-2}).

We have a partition $R=\cup_{\tilde{\rc}'}R_{\tilde{\rc}'}$, where
$\tilde{\rc}'$ runs over the nilpotent $L$-orbits in $\bar{\rc'}$
and $
R_{\tilde{\rc}'}=\{(x,gP)\in\Lg\times(G/P)|\Ad(g^{-1})(x)\in\tilde{\rc}'+\Ln_P\}.
$ Then $R'=R_{\rc'}$ is open in $R$. It is clear that
$p_3^{-1}(R)=p_4^{-1}(\bar{\rc'})=G\times\Ln_P\times\bar{\rc'}$ and
$p_3^{-1}(R_{\tilde{\rc}'})=p_4^{-1}(\tilde{\rc}')=G\times\Ln_P\times\tilde{\rc}'$.

Let $\tilde{\cF}'$ be the local system on $R'$ whose inverse image
under $p_3:G\times\Ln_P\times{\rc'}\rightarrow R'$ equals the
inverse image of $\cF'$ under
$p_4:G\times\Ln_P\times{\rc'}\rightarrow\rc'$. Since $p_3,p_4$ are
principal bundles with connected groups, it follows that the inverse
image of $IC(R,\tilde{\cF}')$ under
$p_3:G\times\Ln_P\times\bar{\rc'}\rightarrow R$ equals the inverse
image of $IC(\bar{\rc'},\cF')$ under
$p_4:G\times\Ln_P\times\bar{\rc'}\rightarrow\bar{\rc'}$. It follows
that
\begin{eqnarray}\label{y-1}
&&(\varphi'_!\bar{\bQ}_{lX})_{\rho'}|_{X_1^\omega}=IC(R,\tilde{\cF}')[\dim\rc'-2\nu_L](
\text{ extend by zero outside } R).
\end{eqnarray}
(Using $p_3^*$ this is reduced to (\ref{i-1}).)

For any subvariety $S$ of $X_1$, we denote
${}_S\varphi'':S\rightarrow\Lg$ the restriction of
$\varphi'':X_1\rightarrow\Lg$ to $S$.

\begin{prop}
Let $d=\nu_{G}-\frac{1}{2}\dim\rc,\ d'=\frac{1}{2}(\dim
\rc-\dim\rc')$ and $d''=\nu_{G}-\nu_{L}-d'$. The following five
numbers coincide:
\begin{enumerate}
\item[(i)] $\dim\Hom_{\bar{\bQ}_l[\mathbf{W}_L]}(\rho',\rho)$;
\item[(ii)] the multiplicity of $\cF$ in the local system
$\cL_1=\cH^{2d}(\varphi''_!(\varphi'_!\bar{\bQ}_{lX})_{\rho'})|_{\rc}$;
\item[(iii)] the multiplicity of $\cF$ in the local system
$\cL_2=\cH^{2d''}({}_{R}\varphi''_!IC(R,\tilde{\cF}'))|_{\rc}$;
\item[(iv)] the multiplicity of $\cF$ in the local system
$$\cL_3=\cH^{2d''}({}_{R'}\varphi''_!IC(R,\tilde{\cF}'))|_{\rc}=\cH^{2d''}({}_{R'}\varphi''_!\tilde{\cF}')|_{\rc};$$
\item[(v)] the multiplicity of $\cF'$ in the local system
$\cH^{2d'}f_!(\cF)$ on $\mathrm{c}'$, where
$f:\pi_{\Lp}^{-1}(\mathrm{c}')\cap\mathrm{c}\rightarrow\mathrm{c}'$
is the restriction of
$\pi_{\Lp}:\mathfrak{p}\rightarrow\mathfrak{l}$.
\end{enumerate}
\end{prop}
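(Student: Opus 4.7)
The plan is to show the five quantities are equal via the chain $(\mathrm{i})=(\mathrm{ii})=(\mathrm{iii})=(\mathrm{iv})=(\mathrm{v})$, each step using a distinct ingredient.

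For $(\mathrm{i})=(\mathrm{ii})$, I apply $\cH^{2d}(\cdot)|_\rc$ to the decomposition (\ref{e-5}). Since $(\varphi_!\bar{\bQ}_{lX})_\rho=IC(\bar{Y},(\pi_!\bar{\bQ}_{l\widetilde{Y}})_\rho)$ and the Springer correspondence for $\Lg$ is defined so that $\cF$ appears in $\cH^{2d}$ of this IC-sheaf restricted to $\rc$ with multiplicity $1$ precisely when $(\rc,\cF)\leftrightarrow\rho$ and $0$ otherwise, the multiplicity of $\cF$ in $\cL_1$ reads off as $\dim\Hom_{\bar{\bQ}_l[\mathbf{W}_L]}(\rho',\rho)$. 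For $(\mathrm{ii})=(\mathrm{iii})$, since $\rc$ is a nilpotent orbit we have $\varphi''^{-1}(\rc)\subset X_1^\omega$, so proper base change lets me replace $(\varphi'_!\bar{\bQ}_{lX})_{\rho'}$ by its restriction to $X_1^\omega$, and (\ref{y-1}) identifies this restriction with $IC(R,\tilde{\cF}')[\dim\rc'-2\nu_L]$; the shift identity $2d+\dim\rc'-2\nu_L=2d''$ then gives $\cL_1=\cL_2$.

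For $(\mathrm{iii})=(\mathrm{iv})$, I invoke the open-closed distinguished triangle attached to $R'\hookrightarrow R$. Its closed complement is $\bigsqcup_{\tilde{\rc}'\subsetneq\bar{\rc'}}R_{\tilde{\rc}'}$, and Proposition~\ref{prop-dim}(ii), applied with $c=\dim\rc$, $\bar c=\dim\tilde{\rc}'$, $\bar\nu=\nu_L$, bounds the dimension of the fiber of $\varphi''|_{R_{\tilde{\rc}'}}$ over $x\in\rc$ by $(\nu_G-\dim\rc/2)-(\nu_L-\dim\tilde{\rc}'/2)=d''-(\dim\rc'-\dim\tilde{\rc}')/2<d''$. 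Hence $\cH^{2d''}$ receives no contribution from the complement strata, and $\cL_2=\cL_3$.

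The hard part will be $(\mathrm{iv})=(\mathrm{v})$, the reciprocity between top-degree cohomology on $\rc$ and on $\rc'$. My strategy is to introduce the correspondence $\widetilde{Z}:=\{(x,gU_P)\in\rc\times G/U_P\,|\,\Ad(g^{-1})(x)\in\rc'+\Ln_P\}$, equipped with the $L$-invariant projection $\alpha:(x,gU_P)\mapsto x$ to $\rc$ and the $L$-equivariant projection $\beta:(x,gU_P)\mapsto\pi_\Lp(\Ad(g^{-1})(x))$ to $\rc'$, so that $\widetilde{Z}/L\cong R'\cap\varphi''^{-1}(\rc)$ and $\tilde{\cF}'$ corresponds to $\beta^*\cF'$. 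The decisive geometric identification is that for each pair $(x,x')\in\rc\times\rc'$, the fibered intersection $\alpha^{-1}(x)\cap\beta^{-1}(x')$ is canonically $Y_{x,x'}/U_P$ from \ref{sec-res}, whose top-dimensional components are indexed by $S_{x,x'}$ carrying the representation $\varepsilon_{x,x'}$ of $A_G(x)\times A_L(x')$. Proper base change applied separately along $\alpha$ and along $\beta$, combined with the dimension bound of Proposition~\ref{prop-dim}(i) (which guarantees that only top-dimensional components contribute), then identifies both $\dim\Hom_{A_G(x)}(\phi,(\cL_3)_x)$ and $\dim\Hom_{A_L(x')}(\phi',(\cH^{2d'}f_!\cF)_{x'})$ with the common value $\la\phi\otimes\phi',\varepsilon_{x,x'}\ra$, completing the chain.
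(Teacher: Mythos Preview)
Your argument for $(\mathrm{iii})=(\mathrm{iv})$ has a genuine gap. The open--closed triangle for $R'\hookrightarrow R$ yields, over $x\in\rc$, a long exact sequence
\[
\cdots\to H_c^{2d''-1}\bigl(\varphi''^{-1}(x)\cap(R-R'),IC\bigr)\xrightarrow{a} H_c^{2d''}\bigl(\varphi''^{-1}(x)\cap R',IC\bigr)\to H_c^{2d''}\bigl(\varphi''^{-1}(x)\cap R,IC\bigr)\to H_c^{2d''}\bigl(\varphi''^{-1}(x)\cap(R-R'),IC\bigr)\to\cdots
\]
To identify $\cL_2$ with $\cL_3$ you need both the rightmost group to vanish \emph{and} the connecting map $a$ to be zero. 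Your dimension bound, combined with the support estimate $\dim\supp\cH^j(IC(R,\tilde{\cF}'))<\dim R-j$ (which you omit but is needed since $IC$ is a complex, not a sheaf), does kill $H_c^{2d''}$ of the boundary. But the same estimate only gives $i+j\le 2d''-1$, so $H_c^{2d''-1}$ of the boundary need not vanish, and you say nothing about $a$. The paper handles $a=0$ by a weight argument: passing to a finite-field model, $H_c^{2d''}$ of the open part is pure of weight $2d''$ (top cohomology of a local system of weight $0$), while Gabber's theorem plus Deligne's weight bound force $H_c^{2d''-1}$ of the boundary to be mixed of weight $\le 2d''-1$, so no nonzero Frobenius-equivariant map can exist between them.

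For $(\mathrm{iv})=(\mathrm{v})$, your correspondence $\widetilde{Z}$ is essentially the content of \cite[8.4(a)]{Lu4}, which the paper invokes directly; your sketch is plausible but you should be aware that this reciprocity lemma is doing real work and is not just proper base change. The steps $(\mathrm{i})=(\mathrm{ii})$ and $(\mathrm{ii})=(\mathrm{iii})$ are fine and match the paper.
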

\begin{proof}
For $\tilde{\rho}\in \mathbf{W}_G^{\wedge}$, the multiplicity of
$\cF$ in $\cH^{2d}((\varphi_!\bar{\bQ}_{lX})_{\tilde{\rho}})|_{\rc}$
is $1$ if $\tilde{\rho}=\rho$  and is $0$ if $\tilde{\rho}\neq\rho$.
Hence it follows from (\ref{e-5}) that the numbers in (i)(ii) are
equal.

We show that $\cL_1=\cL_2$. By (\ref{y-1}), we have
$\cL_2=\cH^{2d}({}_R\varphi''_!((\varphi'_!\bar{\bQ}_{lX})_{\rho'}|_R))|_\rc$.
It suffices to show that
${}_{(X_1-R)}\varphi''_!((\varphi'_!\bar{\bQ}_{lX})_{\rho'}|_{X_1-R})|_\rc=0$.
Assume this is not true. Then there exists $(x,gP)\in X_1-R$ such
that $x\in \rc$ and $(x,gP)\in\supp\
(\varphi'_{!}\bar{\bQ}_{lX})_{\rho'}$. Since $x$ is nilpotent, this
contradicts (\ref{y-2}).

We show that $\cL_2=\cL_3$. For any $x\in\rc$ we consider the
natural exact sequence
\begin{eqnarray*}
&&H_c^{2d-1}(\varphi''^{-1}(x)\cap(R-R'),(\varphi'_!\bar{\bQ}_{lX})_{\rho'})\xrightarrow{a}
H_c^{2d}(\varphi''^{-1}(x)\cap
R',(\varphi'_!\bar{\bQ}_{lX})_{\rho'})\\
&&\rightarrow H_c^{2d}(\varphi''^{-1}(x)\cap
R,(\varphi'_!\bar{\bQ}_{lX})_{\rho'})\rightarrow
H_c^{2d}(\varphi''^{-1}(x)\cap
(R-R'),(\varphi'_!\bar{\bQ}_{lX})_{\rho'}).
\end{eqnarray*}
It is enough to show that $H_c^{2d}(\varphi''^{-1}(x)\cap
(R-R'),(\varphi'_!\bar{\bQ}_{lX})_{\rho'})=0$ and that $a=0$. By
(\ref{y-1}), we can replace
$(\varphi'_!\bar{\bQ}_{lX})_{\rho'}|_{X_1^\omega}$ by
$IC(R,\tilde{\cF}')[\dim\rc'-2\nu_L]$. It is enough to show
\begin{eqnarray}
&&H_c^{2d''}(\varphi''^{-1}(x)\cap (R-R'),IC(R,\tilde{\cF}'))=0,\label{y-3}\\
&&H_c^{2d''-1}(\varphi''^{-1}(x)\cap(R-R'),IC(R,\tilde{\cF}'))\xrightarrow{a}
H_c^{2d''}(\varphi''^{-1}(x)\cap R',IC(R,\tilde{\cF}')) \text{ is
zero}. \label{y-4}\end{eqnarray} From Proposition \ref{prop-dim}, we
see that for any $L$-orbit $\tilde{\rc}'$ in $\bar{\rc'}$,
\begin{equation}\label{y-5}
\dim(\varphi''^{-1}(x)\cap
R_{\tilde{\rc}'})\leq(\nu_G-\frac{1}{2}\dim
\rc)-(\nu_L-\frac{1}{2}\dim\tilde{\rc}').
\end{equation}

If (\ref{y-3}) is not true, then using the partition
\begin{equation}\label{y-8}
\varphi''^{-1}(x)\cap(R-R')=\bigcup_{\tilde{\rc}'\neq\rc'}(\varphi''^{-1}(x)\cap
R_{\tilde{\rc}'}),
\end{equation}
we see that $H_c^{2d''}(\varphi''^{-1}(x)\cap
R_{\tilde{\rc}'},IC(R,\tilde{\cF}'))\neq 0$ for some
$\tilde{\rc}'\neq\rc'$. Hence there exist $i,j$ such that $2d''=i+j$
and $ H^{i}_c(\varphi''^{-1}(x)\cap
R_{\tilde{\rc}'},\cH^j(IC(R,\tilde{\cF}')))\neq0$. It follows that
$i\leq2\dim(\varphi''^{-1}(x)\cap R_{\tilde{\rc}'})\leq
2\nu_G-\dim\rc-2\nu_L+\dim\tilde{\rc}'$ (we use (\ref{y-5})). The
local system $\cH^j(IC(R,\tilde{\cF}'))\neq 0$ so that
$R_{\tilde{\rc}'}\subset\supp\ \cH^j(IC(R,\tilde{\cF}'))$ and $\dim
R_{\tilde{\rc}'}<\dim R-j$. It follows that $j<\dim R-\dim
R_{\tilde{\rc}'}=\dim\rc'-\dim\tilde{\rc}'$ and $i+j<2d''$ in
contradiction to $i+j=2d''$. This proves (\ref{y-3}).

To prove (\ref{y-4}), we can assume that $\tk$ is an algebraic
closure of a finite field $\mathbf{F}_q$, that $G$ has a fixed
$\mathbf{F}_q$-structure with Frobenius map $F:G\rightarrow G$, that
$P,B,L,T$ (hence $X_1,\varphi''$) are defined over $\mathbf{F}_q$,
that any $\tilde{\rc}'$ as above is defined over $\mathbf{F}_q$,
that $F(x)=x$ and that we have an isomorphism
$F^*\cF'\rightarrow\cF'$ which makes $\cF'$ into a local system of
pure weight 0. Then we have natural (Frobenius) endomorphisms of
$H_c^{2d''-1}(\varphi''^{-1}(x)\cap(R-R'),IC(R,\tilde{\cF}'))$ and $
H_c^{2d''}(\varphi''^{-1}(x)\cap
R',IC(R,\tilde{\cF}'))=H_c^{2d''}(\varphi''^{-1}(x)\cap
R',\tilde{\cF}')$ compatible with $a$. To show that $a=0$, it is
enough to show that
\begin{eqnarray}\label{y-6}
&&H_c^{2d''}(\varphi''^{-1}(x)\cap R',IC(R,\tilde{\cF}'))\text{ is
pure of weight } 2d'';
\\&&\label{y-7}
H_c^{2d''-1}(\varphi''^{-1}(x)\cap(R-R'),IC(R,\tilde{\cF}'))\text{
is mixed of weight } \leq 2d''-1.
\end{eqnarray}
Since $\dim(\varphi''^{-1}(x)\cap R')\leq d''$ (see(\ref{y-5})),
(\ref{y-6}) is clear. Using the partition (\ref{y-8}), we see that
to prove (\ref{y-7}), it is enough to prove that
$H_c^{2d''-1}(\varphi''^{-1}(x)\cap
R_{\tilde{\rc}'},IC(R,\tilde{\cF}'))$ is mixed of weight $\leq
2d''-1$ for any ${\tilde{\rc}'}\neq\rc'$.

Using the hypercohomology spectral sequence we see that it is enough
to prove if $i,j$ are such that $2d''-1=i+j$, then
$H_c^{i}(\varphi''^{-1}(x)\cap
R_{\tilde{\rc}'},\cH^j(IC(R,\tilde{\cF}')))$ is mixed of weight
$\leq 2d''-1$ for any $\tilde{\rc}'$. By Gabber's theorem [BBD,
5.3.2], the local system $\cH^j(IC(R,\tilde{\cF}'))$ is mixed of
weight $\leq j$. Then by Deligne's theorem [BBD, 5.1.14(i)],
$H_c^{i}(\varphi''^{-1}(x)\cap
R_{\tilde{\rc}'},\cH^j(IC(R,\tilde{\cF}')))$ is mixed of weight
$\leq i+j=2d''-1$. This proves (\ref{y-7}). Hence $\cL_2=\cL_3$ is
proved.

Now consider the diagram
$\mathbf{V}\xleftarrow{f_{2}}\mathbf{V}'\xrightarrow{f_{1}}\rc$,
where $\mathbf{V}'=\varphi''^{-1}(\rc)\cap
R'=\{(x,gP)\in\rc\times(G/P)|\Ad(g^{-1})(x)\in\rc'+\Ln_P\}$,
$\mathbf{V}=P\backslash(\rc'\times G)$ with $P$ acting by
$p:(x,g)\mapsto(\Ad(\pi(p))(x),gp^{-1})$, $\pi:P\rightarrow L$ the
natural projection, $f_{2}(x,gP)=P$-orbit of
$(\pi_{\Lp}(\Ad(g^{-1})(x)),g)$, $f_{1}(x,gP)=x$. We have
$G$-actions on $\mathbf{V}$ by $g': (x,g)\mapsto(x,g'g)$, on
$\mathbf{V}'$ by $g':(x,gP)\mapsto(\Ad(g')(x),g'gP)$ and on $\rc$ by
$g':x\mapsto\Ad(g')(x)$. Then $f_{1}$ and $f_{2}$ are
$G$-equivariant and $G$ acts transitively on $\mathbf{V}$ and $\rc$.

Note all fibers of $f_{1}$ have dimension $\leq d''$ and all fibers
of $f_{2}$ have dimension $\leq d'$. Applying \cite[8.4(a)]{Lu4}
with $\mathcal{E}_1=\cF$ and with $\mathcal{E}_{2}$ the local system
on $\mathbf{V}$ whose inverse image under the natural map
$\rc'\times G\rightarrow \mathbf{V}$ is
$\cF'\boxtimes\bar{\bQ}_{l}$, we see that the numbers (iv) and (v)
are equal. This completes the proof of the proposition.
\end{proof}

\subsection{} Now we are ready to prove the restriction formula
$(\mathbf{R})$. Let the notation be as in \ref{sec-res}. Let $\rc$
be the $G$-orbit of $x$ and $\rc'$ be the $L$-orbit of $x'$. Let
$\tau: G/Z_G^0(x)\rightarrow G/Z_G(x)\simeq\rc$ be the Galois
 covering 
 with group $A_G(x)$. We have the following commutative
diagram$$\CD
  Y_{x,x'} @>a>> Y_{x,x'}/Z_G^0(x) \\
  @V b VV @V \wr VV  \\
  (x'+\Ln_P)\cap\rc @<\tau<< \tau^{-1}((x'+\Ln_P)\cap\rc),
\endCD$$
where $a$ is the natural projection and $b$ is given by $g\mapsto
\Ad(g^{-1})(x)$. Then $a$ induces an $A_G(x)$-equivariant bijection
between $S_{x,x'}$ and the set of irreducible components of
$\tau^{-1}((x'+\Ln_P)\cap\rc)$ of dimension $d'=\frac{1}{2}(\dim
\rc-\dim\rc')$ (note that $\dim (x'+\Ln_P)\cap\rc\leq d'$ by
Proposition \ref{prop-dim} (i)).

Assume $\cF$ corresponds to $\phi\in A_G(x)^\wedge$ and $\cF'$
corresponds to $\phi'\in A_L(x')^\wedge$. We have $\cF\simeq
\Hom_{A_G(x)}(\phi,\tau_*\bar{\bQ}_l)$ and thus
$H_c^{2d'}((x'+\Ln_P)\cap\rc,\cF)\cong(H_c^{2d'}((x'+\Ln_P)\cap\rc,\tau_*\bar{\bQ}_l)\otimes\phi^\wedge)^{A_G(x)}
\cong(H_c^{2d'}(\tau^{-1}((x'+\Ln_P)\cap\rc),\bar{\bQ}_l)\otimes\phi^\wedge)^{A_G(x)}.$
Then the number (v) in Proposition \ref{prop-fivenumber} is equal to
$$\la \phi',H_c^{2d'}(f^{-1}(x'),\cF)\ra_{A_L(x')}=\la
\phi',H_c^{2d'}((x'+\Ln_P)\cap\rc,\cF)\ra_{A_L(x')}=\la
\phi\otimes\phi',\varepsilon_{x,x'}\ra_{A_G(x)\times A_L(x')}.$$
Hence the restriction formula $(\mathbf{R})$ follows from
Proposition \ref{prop-fivenumber} ((i)=(v)).

\section{Orthogonal Lie algebras}\label{sec-or}
In this section we assume that $G=SO(N)$. Let $\tilde{G}=O(N)$.
\subsection{}\label{comp-or}
Let $x\in\Lg$ be nilpotent. The $\tilde{G}$-orbit $\rc$ of $x$ is
characterized by the following data (\cite{Hes}):

(d1) The sizes of the Jordan blocks of $x$ give rise to a partition
$\lambda$ of $N$,
$0\leq\lambda_1\leq\lambda_2\leq\cdots\leq\lambda_{s}$.

(d2) For each $\lambda_i$, $i=1,\ldots,s$, there is an integer
$\chi(\lambda_i)$ satisfy
$\frac{\lambda_i}{2}\leq\chi(\lambda_i)\leq\lambda_i$. Moreover,
$\chi(\lambda_i)\geq\chi(\lambda_{i-1}),\lambda_i-\chi(\lambda_i)\geq\lambda_{i-1}-\chi(\lambda_{i-1})$,
$i=2,\ldots,s$.

Denote $m(\lambda_i)$ the multiplicity of $\lambda_i$ in the
partition $\lambda$. If $N$ is even, then $m({\lambda_i})$ is even
for each $\lambda_i>0$. If $N$ is odd, then the set
$\{\lambda_i>0|m(\lambda_i)\text{ is odd}\}$ is $\{a,a-1\}\cap\{1,2,\ldots\}$ for some
integer $a\geq 1$.

We write $x$ (or $\rc$)
$=(\lambda,\chi)=(\lambda_{s})_{\chi(\lambda_s)}\cdots(\lambda_1)_{\chi(\lambda_1)}$.
The component groups
$\tilde{A}(x)=Z_{\tilde{G}}(x)/Z_{\tilde{G}}^0(x)$ and
$A(x)=Z_{G}(x)/Z_{G}^0(x)$ can be described as follows (see
\cite{X1}). Let $\epsilon_i$ correspond to $\lambda_i$,
$i=1,\ldots,s$. Then $\tilde{A}(x)$ is isomorphic to the abelian
group generated by $\{\epsilon_i,1\leq i\leq s|\chi(\lambda_i)\neq
\lambda_i/2\}$ with relations
\begin{enumerate}
\item[(r1)] $\epsilon_i^2=1$,
\item[(r2)] $\epsilon_i=\epsilon_{i+1}\text{ if
}\chi(\lambda_i)+\chi(\lambda_{i+1})>\lambda_{i+1}$,
\item[(r3)] $\epsilon_i=1$ if $m(\lambda_i)$ is odd.
\end{enumerate}
If $N$ is even, $A(x)$ is the subgroup of $\tilde{A}(x)$ consisting
of those elements that can be written as a product of even number of
generators.
\subsection{}\label{prop-1} Let $\rc'=(\lambda',\chi')\in
f_x(\tilde{\cP}_x)$, $\mathbf{Y}=f_x^{-1}(\rc')$ and
$\bX=\varrho_x^{-1}(\mathbf{Y})$ (see \ref{ss1-2}). Spaltenstein
\cite{Spal} has described the necessary and sufficient conditions
for $\dim \bX=\dim \cB_x$ as follows.
\begin{prop}[\cite{Spal}]
We have $\dim \bX=\dim \cB_x$ if and only if $(\lambda',\chi')$
satisfy (a) or (b):

\noindent (a) Assume that
$\lambda_i\neq\lambda_{i+1}\neq\lambda_{i+2}$ and
$\chi(\lambda_{i+2})=\lambda_{i+2}$. $\lambda_j'=\lambda_j$, $j\neq
i+2,i+1$,
$\lambda_{i+2}'=\lambda_{i+2}-1,\lambda_{i+1}'=\lambda_{i+1}-1$,
$\chi'(\lambda_j')=\chi(\lambda_j)$ if $j>i+2$,
$\chi'(\lambda_j')=\lambda_j'$ if $j\leq i+2$.

In this case, $\dim \mathbf{Y}=s-i-2$.

\noindent (b) Assume that $\lambda_{i+1}=\lambda_{i}>\lambda_{i-1}$.
$\lambda_j'=\lambda_j$, $j\neq i+1,i$,
$\lambda_{i+1}'=\lambda_{i+1}-1, \lambda_i'=\lambda_i-1$,
$\chi'(\lambda_j')=\chi(\lambda_j)$, $j\neq i,i+1$ and
$\chi'(\lambda_i')=\chi'(\lambda_{i+1}')\in\{\chi(\lambda_i),\chi(\lambda_i)-1\}$
satisfies $\lambda_i'/2\leq\chi'(\lambda_i')\leq\lambda_i'$,
$\chi(\lambda_{i-1})\leq\chi'(\lambda_i')\leq\chi(\lambda_{i-1})+\lambda_i-\lambda_{i-1}-1$.

In this case, $\dim \mathbf{Y}=s-i$ if
$\chi'(\lambda_i')=\chi(\lambda_i)$ and $\dim \mathbf{Y}=s-i-1$ if
$\chi'(\lambda_i')=\chi(\lambda_i)-1$.
\end{prop}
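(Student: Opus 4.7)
The plan is to reduce the dimension condition $\dim \bX = \dim \tilde{\cB}_x$ to an explicit combinatorial condition by analyzing the fibration $\varrho_x \colon \tilde{\cB}_x \to \tilde{\cP}_x$ and the variety $\mathbf{Y}$ directly. The fibers of $\varrho_x$ over a point $g\tilde{P}g^{-1} \in \mathbf{Y}$ are naturally isomorphic to the Springer fiber $\cB^{\tilde{L}}_{x'}$ for the induced nilpotent element $x' = \pi_{\Lp}(\Ad(g^{-1})x) \in \Ll$, whose $\tilde{L}$-orbit type is $(\lambda',\chi')$ by definition of $\mathbf{Y}$. So $\dim \bX = \dim \mathbf{Y} + \dim \cB^{\tilde{L}}_{x'}$, and the condition $\dim \bX = \dim \tilde{\cB}_x$ becomes $\dim \mathbf{Y} = \dim \tilde{\cB}_x - \dim \cB^{\tilde{L}}_{x'}$, a number that can be computed from the partition/$\chi$-function data using the dimension formula of \cite{X1}.

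Next, I would identify $\mathbf{Y}$ with the variety of isotropic lines $\Sigma \subset V$ such that $x\Sigma = 0$ (since $\Sigma$ is one-dimensional and $x|_\Sigma$ must be nilpotent), $\alpha(\Sigma) = 0$, and the nilpotent element induced by $x$ on the quadratic space $\Sigma^{\perp}/\Sigma$ has orbit type $(\lambda',\chi')$. Using a normal-form basis for $(V,\alpha,x)$ adapted to the Jordan decomposition as in \cite{X1}, I would parameterize the relevant choices of a vector $v \in \ker x$ with $\alpha(v)=0$ by which Jordan block (or pair of blocks) the top component of $v$ involves. A case split then emerges: either (i) $v$ is supported on a single block, which forces one specific $\lambda_j$ to decrease by $1$ while the quadratic form structure on $\Sigma^\perp/\Sigma$ collapses another block of matching size, or (ii) $v$ involves two distinct but equal-length blocks $\lambda_i = \lambda_{i+1}$, in which case both blocks shorten by $1$ and the restriction of $\alpha$ to the $2$-dimensional kernel of these blocks modulo $\Sigma$ determines the new $\chi'$-value.

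I would then match these two geometric cases to the statement. In case (a), the hypothesis $\chi(\lambda_{i+2}) = \lambda_{i+2}$ together with the strict gaps $\lambda_i < \lambda_{i+1} < \lambda_{i+2}$ means $v$ is forced to live in the top position of the $\lambda_{i+2}$-block modulo corrections from lower blocks, and to satisfy $\alpha$-isotropy the lower block corrections are constrained precisely so that $\chi'(\lambda'_j) = \lambda'_j$ for $j \leq i+2$; the $s-i-2$ dimensions of choice come from the freedom in adding lower-block corrections with $j > i+2$. In case (b), equality $\lambda_{i+1}=\lambda_i$ produces a two-parameter family of isotropic lines in the corresponding rank-$2$ kernel contribution, and the two possible values of $\chi'(\lambda'_i)$ correspond to whether the chosen line is $\alpha$-anisotropic in a refined sense (value $\chi(\lambda_i)$, open condition, dimension $s-i$) or $\alpha$-degenerate (value $\chi(\lambda_i)-1$, dimension $s-i-1$); the bounds on $\chi'(\lambda'_i)$ come directly from the defining inequalities (d2) applied to $(\lambda',\chi')$.

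The main obstacle will be the characteristic-$2$ bookkeeping of how the $\chi$-invariants transform under the passage $V \rightsquigarrow \Sigma^\perp/\Sigma$. The integers $\chi(\lambda_i)$ encode subtle information about the induced quadratic form on each Jordan block, and when $\Sigma$ mixes several blocks or lies in a position where the form behaves exceptionally, the new $\chi'(\lambda'_j)$ may differ from the naive guess. I would handle this by working explicitly in the normal form of \cite{X1}, computing $T_\xi$ and $\alpha$ on representative bases of $\Sigma^\perp/\Sigma$, and verifying that no choice of $\Sigma$ outside cases (a) and (b) yields the required dimension $\dim \tilde{\cB}_x - \dim \cB^{\tilde{L}}_{x'}$; all other geometric configurations produce a strictly smaller $\mathbf{Y}$, ensuring necessity as well as sufficiency.
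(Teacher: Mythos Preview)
The paper does not prove this proposition: it is quoted verbatim from Spaltenstein \cite{Spal}, as the bracketed citation in the proposition header indicates. What the paper does provide, in \S\ref{ssec-yc}--\ref{ssec-2}, is an explicit case-by-case description of the varieties $\mathbf{Y}$ and their dimensions for $G=O(2n+1)$ (five cases (i)--(v) according to the normal-form decomposition $V=W_{l_1}(\lambda_1)\oplus\cdots\oplus D(\lambda_{k+1})\oplus\cdots$), which amounts to a verification of the proposition in that case and is the input needed for the proof of Proposition~\ref{prop-gps}.

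Your overall strategy---reduce via the fibration $\varrho_x$ to computing $\dim\mathbf{Y}$, identify $\tilde{\cP}_x$ with $\mathbb{P}(\ker x\cap\alpha^{-1}(0))$, and stratify by which Jordan blocks the line $\Sigma$ meets---is correct and is exactly what underlies both Spaltenstein's argument and the paper's \S\ref{ssec-yc}. However, your geometric reading of case~(a) is off. Case~(a) requires three \emph{distinct} consecutive parts $\lambda_i<\lambda_{i+1}<\lambda_{i+2}$ with $\chi(\lambda_{i+2})=\lambda_{i+2}$; in the orthogonal setting this can only occur in odd dimension, where it corresponds not to ``$v$ supported on a single block'' but to the line $\Sigma$ interacting with the defective summand $D(\lambda_{k+1})$ (cases (iv) and (v) of \S\ref{ssec-yc}). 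Case~(b) of the proposition, by contrast, covers the generic situation $\lambda_i=\lambda_{i+1}$ and matches cases (i)--(iii). So your two-way split (i)/(ii) does not line up with (a)/(b) the way you suggest; the actual analysis has more cases, and the distinguishing feature of~(a) is the presence of the odd-orthogonal defect, not single-block support.
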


\subsection{}\label{prop-gps}From now on let $\rc'$ be as in Proposition \ref{prop-1}.
Let $\tilde{A}_P$ and $\tilde{A}_P'$ be defined as in \ref{ss1-2}.
\begin{prop} The group $Z_{\tilde{G}}(x)$ acts transitively on
$\mathbf{Y}$. The group $\tilde{A}_P$ is the subgroup of
$\tilde{A}(x)$ generated by the elements $\epsilon_i$ which appear
both in the generators of $\tilde{A}(x)$ and of $\tilde{A}'(x')$.
The group $\tilde{A}_P'$ is the smallest subgroup of
$\tilde{A}'(x')$ such that the map $\tilde{A}_P\rightarrow
\tilde{A}'(x')/\tilde{A}_P'$ given by $\epsilon_i\mapsto
\epsilon_i'$ is a morphism.
\end{prop}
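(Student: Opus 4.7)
The plan is to realize $\mathbf{Y}$ concretely as a variety of isotropic lines in $V$ stabilized by $x$ and then reduce all three assertions to explicit computations with Jordan bases, paralleling Spaltenstein's treatment of the unipotent case in \cite{Spal2}. Since $\tilde{P}$ is the stabilizer of an isotropic line $\Sigma\subset V$, the variety $\tilde{\cP}_x$ identifies with the set of isotropic lines $\Sigma'\subset V$ such that $x\Sigma'\subseteq\Sigma'$, and $\mathbf{Y}$ is the subset of such $\Sigma'$ for which the induced orbit of $x$ on $(\Sigma')^\perp/\Sigma'$ is $\rc'=(\lambda',\chi')$. Using a Jordan basis of $V$ adapted to $x$ and $\alpha$ as in \cite{X1}, the allowed lines $\Sigma'=\tk\cdot v$ can be listed explicitly: in case (a) of Proposition \ref{prop-1}, $v$ is built from the top generators of the blocks of sizes $\lambda_{i+1}$ and $\lambda_{i+2}$; in case (b), from the top generators of the two equal blocks $\lambda_{i+1}=\lambda_i$, with the two options $\chi'(\lambda_i')\in\{\chi(\lambda_i),\chi(\lambda_i)-1\}$ corresponding to the two possible values of $\alpha(v)$ modulo squares of longer vectors.

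From this parametrization, transitivity of $Z_{\tilde{G}}(x)$ on $\mathbf{Y}$ follows by computing the dimension of the $Z_{\tilde{G}}(x)$-orbit of a fixed representative $\Sigma'$ and matching it against the value of $\dim\mathbf{Y}$ given by Proposition \ref{prop-1}; the conjugating elements come from the standard generators of $Z_{\tilde{G}}(x)$, namely scaling isomorphisms between equal Jordan blocks together with lower-triangular modifications coming from shorter blocks. Irreducibility of $\mathbf{Y}$, which is clear from the explicit description of its points, then forces the unique top-dimensional orbit to fill $\mathbf{Y}$. The subgroup $\tilde{A}_P$ is by definition the image of $Z_{\tilde{P}}(x)=Z_{\tilde{G}}(x)\cap\tilde{P}$ in $\tilde{A}(x)$; a generator $a_j$ of $\tilde{A}(x)$ (corresponding to a block $\lambda_j$ with $\chi(\lambda_j)\neq\lambda_j/2$) can be lifted to an element of $Z_{\tilde{P}}(x)$ exactly when $\lambda_j$ survives with the same $\chi$-value on the $L$-side and still indexes a generator of $\tilde{A}'(x')$, that is, when $j\notin\{i+1,i+2\}$ in case (a) and $j\notin\{i,i+1\}$ in case (b). This yields the claimed description of $\tilde{A}_P$. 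For $\tilde{A}_P'$, the requirement that $h:a_j\mapsto a_j'$ descend to a morphism $\tilde{A}_P\to\tilde{A}'(x')/\tilde{A}_P'$ amounts to preserving the relations (r1)--(r3) of \ref{comp-or} modulo $\tilde{A}_P'$, and minimality is obtained by taking $\tilde{A}_P'$ to be generated by exactly those products of $L$-side generators that are forced into the kernel of $h$ by these relations.

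The main obstacle will be the case analysis inside case (b) when $\chi'(\lambda_i')=\chi(\lambda_i)-1$, since this is precisely where the value $\chi'(\lambda_i')$ can drop to $\lambda_i'/2$ (eliminating a generator on the $L$-side) and where the relation (r2) on the $L$-side may force $a_i'=a_{i+1}'$; additionally the parity of $m(\lambda_j)$ for $j$ adjacent to $i$ enters via (r3) and must be tracked in parallel. Working through each sub-case block-by-block, and checking that $\tilde{A}_P'$ as described is indeed the smallest subgroup making $h$ a morphism, is where the bulk of the argument lives; the analysis is, however, local to a few consecutive blocks and is formally analogous to that of \cite{Spal2}.
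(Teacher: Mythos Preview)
Your transitivity argument has a genuine gap. You write that ``irreducibility of $\mathbf{Y}$, which is clear from the explicit description of its points, then forces the unique top-dimensional orbit to fill $\mathbf{Y}$.'' But $\mathbf{Y}$ is \emph{not} always irreducible: the first Corollary immediately following the Proposition records that in case~(b) with $\chi(\lambda_i)=\frac{\lambda_i+1}{2}$, $\lambda_{i+2}-\chi(\lambda_{i+2})\geq\frac{\lambda_i+1}{2}$ and $\chi'(\lambda_i')=\chi(\lambda_i)-1$, the variety $\mathbf{Y}$ has two irreducible components and $|\tilde{A}(x)/\tilde{A}_P|=2$. In that situation the connected centralizer $Z_{\tilde{G}}^0(x)$ preserves each component, and a dimension count cannot distinguish between ``each component is a single orbit'' and ``$\mathbf{Y}$ is a single orbit''; you must exhibit an element of $Z_{\tilde{G}}(x)$ (namely a representative of $a_i=a_{i+1}$) that swaps the two components. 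Your outline does not address this, and the blanket irreducibility claim is simply false. A secondary issue: your description of the two sub-cases in~(b) as ``two possible values of $\alpha(v)$'' is not quite right, since $\Sigma$ is isotropic so $\alpha(v)=0$ always; the dichotomy is governed by whether $\alpha(t^{l_j-1}w)$ vanishes for a preimage $w$ with $v=t^{\lambda_j-1}w$.

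The paper's proof avoids a global dimension count by a cleaner local-to-global reduction. For each $\Sigma\in\mathbf{Y}$ it introduces the set $X(\Sigma)$ of nondegenerate submodules $M\subset V$ that are minimal containing $\Sigma$ and have the correct $\chi$-value, then works with $\widetilde{\mathbf{Y}}^*=\{(\Sigma,M):\Sigma\in\mathbf{Y},\,M\in X(\Sigma)^*\}$. The point is that the isomorphism class of $(M,M^\perp)$ is constant on $pr_2(\widetilde{\mathbf{Y}}^*)$, so $Z_{\tilde{G}}(x)$ acts transitively there for free; and for fixed $M$ the stabilizer $Z_M\cong Z_{G(M)}(x_M)\times Z_{G(M^\perp)}(x_{M^\perp})$ acts transitively on the fiber $\widetilde{\mathbf{Y}}_M^*$ by inspection of seven tiny cases (the form modules $W_l(m)$, $D(m)$, etc.). This two-step argument handles the reducible $\mathbf{Y}$ case automatically, since the element swapping the two components lives visibly in $Z_{G(M)}(x_M)$ for $M=W_{m+1}(2m+1)$. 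The same decomposition $V=M\oplus M'$ then factors the map $Z_\Sigma\to A'(x')$ through $(Z/Z^0)\times A(x_{M^\perp})$ with $Z=\{z\in Z_{G(M)}(x_M):z\Sigma=\Sigma\}$, and the identification of $\tilde{A}_P$, $\tilde{A}_P'$ reduces to computing $Z/Z^0\to A'(x_{\tilde{M}})$ in each of the seven small cases rather than to a global block-by-block case analysis. Your Jordan-basis approach could in principle be completed, but you would need to supply the missing swap in the reducible case and to carry out the lifting/kernel analysis you defer to the last paragraph; the submodule method packages both of these more efficiently.
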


\begin{coro}
 The variety $\mathbf{Y}$ has two irreducible components (and $|\tilde{A}(x)/\tilde{A}_P|=2$)
 if $\rc'$ is as in Proposition \ref{prop-1} (b) with $\chi(\lambda_i)=\frac{\lambda_i+1}{2},
\lambda_{i+2}-\chi(\lambda_{i+2})\geq\frac{\lambda_i+1}{2}$ and
$\chi'(\lambda_i')=\chi(\lambda_i)-1$.

In this case, suppose
$D=\{1,\epsilon_i\}=\{1,\epsilon_{i+1}\}\subset \tilde{A}(x)$, then
$\tilde{A}(x)=D\times \tilde{A}_P$. In the other cases, $\mathbf{Y}$
is irreducible and $\tilde{A}_P=\tilde{A}(x)$.
\end{coro}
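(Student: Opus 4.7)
My plan is to deduce the corollary directly from Proposition \ref{prop-gps} by case analysis following the dichotomy of Proposition \ref{prop-1}. First I would observe that since $Z_{\tilde{G}}(x)$ acts transitively on $\mathbf{Y}$ by Proposition \ref{prop-gps} and its identity component preserves every irreducible component, the component group $\tilde{A}(x)$ acts transitively on the set of irreducible components with stabilizer $\tilde{A}_P$. Hence the number of irreducible components equals $|\tilde{A}(x)/\tilde{A}_P|$, and it suffices to compute $\tilde{A}_P$ using the description in Proposition \ref{prop-gps}: $\tilde{A}_P$ is generated by those $a_j$ which simultaneously satisfy $\chi(\lambda_j)\neq\lambda_j/2$ and $\chi'(\lambda_j')\neq\lambda_j'/2$.

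Next I would handle case (a) quickly. There $(\lambda_j,\chi(\lambda_j))$ is unchanged for $j>i+2$, while for $j\leq i+2$ we have $\chi'(\lambda_j')=\lambda_j'\neq\lambda_j'/2$ automatically. Every generator of $\tilde{A}(x)$ remains a generator of $\tilde{A}'(x')$, so $\tilde{A}_P=\tilde{A}(x)$ and $\mathbf{Y}$ is irreducible. In case (b), for $j\neq i,i+1$ the pair $(\lambda_j',\chi'(\lambda_j'))$ is unchanged, so everything reduces to the behaviour of $a_i=a_{i+1}$, and I would split according to the two allowed values $\chi'(\lambda_i')\in\{\chi(\lambda_i),\chi(\lambda_i)-1\}$. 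If $\chi'(\lambda_i')=\chi(\lambda_i)$, or if $\chi'(\lambda_i')=\chi(\lambda_i)-1$ with $\chi(\lambda_i)\neq(\lambda_i+1)/2$, one checks that $\chi'(\lambda_i')\neq\lambda_i'/2$, so $a_i$ remains a generator in $\tilde{A}'(x')$ and again $\tilde{A}_P=\tilde{A}(x)$; this accounts for all the ``other cases''. The only remaining possibility is the hypothesis of the corollary, namely $\chi(\lambda_i)=(\lambda_i+1)/2$ with $\chi'(\lambda_i')=\chi(\lambda_i)-1=\lambda_i'/2$, in which case $a_i=a_{i+1}$ drops out of the generators of $\tilde{A}'(x')$.

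The main obstacle is precisely this last sub-case, where I must verify that $a_i$ really contributes an independent factor of order $2$ and is not absorbed into $\tilde{A}_P$ via relation (r2). I would tackle this with a two-sided check. On the lower side, the built-in constraint $\chi(\lambda_{i-1})\leq\chi'(\lambda_i')$ from Proposition \ref{prop-1}(b) gives $\chi(\lambda_{i-1})\leq(\lambda_i-1)/2$, whence $\chi(\lambda_{i-1})+\chi(\lambda_i)\leq\lambda_i$, so (r2) does not identify $a_{i-1}$ with $a_i$. On the upper side, the extra hypothesis $\lambda_{i+2}-\chi(\lambda_{i+2})\geq(\lambda_i+1)/2$ is exactly what yields $\chi(\lambda_{i+1})+\chi(\lambda_{i+2})\leq\lambda_{i+2}$, preventing identification of $a_{i+1}$ with $a_{i+2}$. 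Combined with relation (r1), the subgroup $D=\{1,a_i\}$ is then of order $2$ and disjoint from $\tilde{A}_P$, yielding $\tilde{A}(x)=D\times\tilde{A}_P$ and $|\tilde{A}(x)/\tilde{A}_P|=2$. The bookkeeping is delicate, but the two numerical hypotheses in the corollary are tailored to kill exactly the two potential (r2) identifications that could otherwise pull $a_i$ into $\tilde{A}_P$.
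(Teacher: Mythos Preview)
Your deduction is correct and is exactly what the paper intends: the corollary is stated without its own proof, as an immediate consequence of the description of $\tilde{A}_P$ in Proposition \ref{prop-gps} together with the presentation (r1)--(r3) of $\tilde{A}(x)$ in \ref{comp-or}. The only spot needing a touch more care is case (a), where your claim $\chi'(\lambda_j')=\lambda_j'\neq\lambda_j'/2$ for $j\le i+2$ can fail when $\lambda_j'=0$ (e.g.\ $j=i+1$ with $\lambda_{i+1}=1$); but since $\lambda_i<\lambda_{i+1}<\lambda_{i+2}$ forces $m(\lambda_{i+1})=1$, relation (r3) gives $a_{i+1}=1$ in $\tilde{A}(x)$, and the conclusion $\tilde{A}_P=\tilde{A}(x)$ is unaffected.
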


\begin{coro}
The group $\tilde{A}_P'$ is trivial, except in the following cases
where it has order 2:

(a) $\tilde{A}_P'=\{1,\epsilon_{i+3}'\}\subset \tilde{A}'(x')$ if
$\rc'$ is as in Proposition \ref{prop-1} (a) with
$\lambda_{i+2}+\chi(\lambda_{i+3})=\lambda_{i+3}+1$.

(b) $\tilde{A}_P'=\{1,\epsilon_{i+1}'\epsilon_{i+2}'\}\subset
\tilde{A}'(x')$ if $\rc'$ is as in Proposition \ref{prop-1} (b) with
$\chi(\lambda_i)\neq\frac{\lambda_i+1}{2},
\chi(\lambda_{i+2})+\chi(\lambda_{i})=\lambda_{i+2}+1$ and
$\chi'(\lambda_i')=\chi(\lambda_i)-1$.

\end{coro}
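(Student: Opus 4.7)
By Proposition \ref{prop-gps}, $\tilde{A}_P'$ is characterized as the smallest subgroup of $\tilde{A}'(x')$ such that the assignment $a_i\mapsto a_i'$ extends to a well-defined morphism $\tilde{A}_P\to \tilde{A}'(x')/\tilde{A}_P'$. Since $\tilde{A}(x)$ and $\tilde{A}'(x')$ are both abelian and presented by the generators and relations (r1)--(r3) of \ref{comp-or}, the plan is to compare the two presentations and identify every relation holding among the generators of $\tilde{A}_P$ inside $\tilde{A}(x)$ but failing to hold among the corresponding $a_i'$ inside $\tilde{A}'(x')$. Each such failure forces a particular product of $a_k'$ into $\tilde{A}_P'$, and conversely if every relation is inherited then $\tilde{A}_P'=1$.

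Concretely, (r1) contributes nothing, and for (r2) I would go through the consecutive pairs $(j,j+1)$ and compute the sign of $\chi'(\lambda_j')+\chi'(\lambda_{j+1}')-\lambda_{j+1}'$, comparing with the sign of $\chi(\lambda_j)+\chi(\lambda_{j+1})-\lambda_{j+1}$. Since $(\lambda',\chi')$ differs from $(\lambda,\chi)$ only at indices near $i$ by Proposition \ref{prop-1}, only a small number of pairs are affected. In case (a) of Proposition \ref{prop-1} a direct calculation isolates the pair $(i+2,i+3)$ as the unique one at which (r2) switches from holding to failing, and the critical hypothesis is exactly $\lambda_{i+2}+\chi(\lambda_{i+3})=\lambda_{i+3}+1$. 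In case (b) the unique such pair is $(i+1,i+2)$, under the hypothesis $\chi(\lambda_{i+2})+\chi(\lambda_i)=\lambda_{i+2}+1$, and only for the choice $\chi'(\lambda_i')=\chi(\lambda_i)-1$ (the other choice preserves the strict inequality).

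Finally one must analyze (r3): the multiplicities $m(\lambda_k')$ can differ in parity from $m(\lambda_k)$, introducing new trivial relations in $\tilde{A}'(x')$ or removing old ones. Combined with the failed (r2), this simplifies the naive obstruction. In case (a) the parity change of $m(\lambda_{i+2}-1)$ together with the relations (r2) for $j<i+2$ forces $a_{i+2}'=1$ in $\tilde{A}'(x')$, so the obstruction $a_{i+2}'a_{i+3}'$ collapses to $a_{i+3}'$ and gives $\tilde{A}_P'=\{1,a_{i+3}'\}$; in case (b) the supplementary hypothesis $\chi(\lambda_i)\neq(\lambda_i+1)/2$ guarantees $\chi'(\lambda_{i+1}')>\lambda_{i+1}'/2$, so $a_{i+1}'$ and $a_{i+2}'$ remain independent generators of $\tilde{A}'(x')$ and the obstruction $a_{i+1}'a_{i+2}'$ does not simplify. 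In every other sub-case of Proposition \ref{prop-1} the verification shows that all relations are preserved, giving $\tilde{A}_P'=1$. The main obstacle is the bookkeeping of these multiplicity and generator changes, since they determine whether the obstruction produced by a broken (r2) collapses inside $\tilde{A}'(x')$ or survives as stated.
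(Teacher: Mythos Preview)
Your approach is correct and is exactly what the paper does (implicitly): the corollary is read off from Proposition \ref{prop-gps} by comparing the presentations (r1)--(r3) for $\tilde{A}(x)$ and $\tilde{A}'(x')$ across the cases of Proposition \ref{prop-1}, and the paper gives no separate argument beyond this.

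One minor sharpening of your case (a) analysis: observe first that case (a) can only occur for odd $N$, since $\lambda_i<\lambda_{i+1}<\lambda_{i+2}$ forces $m(\lambda_{i+1})=1$, impossible when $N$ is even. Under the hypothesis $\lambda_{i+2}+\chi(\lambda_{i+3})=\lambda_{i+3}+1$ together with $\chi(\lambda_{i+2})=\lambda_{i+2}$, one checks that $\lambda_{i+2}<\lambda_{i+3}$ (else $\chi(\lambda_{i+3})=\lambda_{i+2}$ would force $\lambda_{i+2}=1$ and $\lambda_{i+1}<1$). Hence $m(\lambda_{i+2})=1$ is odd as well, so $a_{i+2}=1$ already in $\tilde{A}(x)$ by (r3), and combined with the (r2)-relation $a_{i+2}=a_{i+3}$ this gives $a_{i+3}=1$ in $\tilde{A}(x)$. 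The obstruction is then directly that $a_{i+3}'$ is not forced to be trivial in $\tilde{A}'(x')$, since the corresponding (r2) at $(i+2,i+3)$ fails there. This is perhaps more direct than first producing $a_{i+2}'a_{i+3}'$ and then collapsing it via $a_{i+2}'=1$, though your route also reaches the same conclusion.
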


\subsection{}\label{ssec-yc}
Assume $G=O(2n+1)=O(V)$ and $x$ corresponds to the form module
$$V=W_{l_1}(\lambda_1)\oplus\cdots\oplus W_{l_k}(\lambda_k)\oplus
D(\lambda_{k+1})\oplus W_{\lambda_{k+2}}(\lambda_{k+2})\cdots\oplus
W_{\lambda_s}(\lambda_s),$$ where $l_i=\chi(\lambda_i),
i=1,\ldots,k$. (Note that $\lambda_i$ are different from those in
\ref{comp-or}. We use here notations from \cite{X1} (hence $\lambda_1\geq\lambda_2\geq\cdots\geq\lambda_s$).) We describe
the orbits $\rc'$ and the corresponding set $\mathbf{Y}$.

We view $V$ as an $A=k[t]$-module by $\sum a_it^iv=\sum a_i(x^iv)$.
For all $i\geq 1$, let
$$W_i=\ker t\cap \text{Im}(t^{i-1}).$$ Denote $\mathbb{P}(W)$ the set of all lines in the space $W$. We identify ${\cP}_x$ with
$\mathbb{P}(\ker x\cap\alpha^{-1}(0))$ (where $\alpha$ is the
quadratic form on $V$). Let $\bY$ be as in \ref{prop-gps}. There
exists a unique $i_0$ such that
$\mathbf{Y}\subset\bP(W_{i_0})-\bP(W_{i_0+1})$. Then $i_0=\lambda_j$
for some $j=1,\ldots,s$ or $i_0=\lambda_{k+1}-1$. Write
$V'=\Sigma^\p/\Sigma$. We have the following cases.

(i) Assume $i_0=\lambda_j$, $1\leq j\leq k$,
$\lambda_{j}-1\geq\lambda_{j+1}$ and $\lambda_j-l_j-1\geq
\lambda_{j+1}-l_{j+1}$.
\begin{eqnarray*}&&\rc'=W_{l_1}(\lambda_1)\oplus\cdots\oplus
W_{l_j}(\lambda_j-1)\oplus\cdots\oplus
D(\lambda_{k+1})\oplus\cdots\oplus W_{\lambda_s}(\lambda_s),
\\
&&\mathbf{Y}=\{\tk t^{\lambda_j-1}w|t^{\lambda_j}w=0,w\notin\text{Im
} t,\alpha(t^{l_j-1}w)\neq 0\},\dim \mathbf{Y}=2j-1.
\end{eqnarray*}

(ii) Assume $i_0=\lambda_j$, $1\leq j\leq k$,
$\lambda_{j}-1\geq\lambda_{j+1}$, $l_j-1\geq l_{j+1}$ and
$l_{j}-1\geq[\lambda_{j}/2]$. Let $$\mathbf{Y}'=\{\tk
t^{\lambda_j-1}w|t^{\lambda_j}w=0,w\notin\text{Im }
t,\alpha(t^{l_j-1}w)= 0\}.$$
\begin{eqnarray*}&&\rc'=W_{l_1}(\lambda_1)\oplus\cdots\oplus
W_{l_j-1}(\lambda_j-1)\oplus\cdots\oplus
D(\lambda_{k+1})\oplus\cdots\oplus W_{\lambda_s}(\lambda_s),
\\
&&\mathbf{Y}=\mathbf{Y}'\text{  except if
}\lambda_a-l_a>\lambda_{j-1}-l_{j-1}\text{ for all
}\lambda_a>\lambda_{j-1}, \text{ and
}l_{j-1}=l_j>\frac{\lambda_{j-1}+1}{2},\\&& \qquad\quad\ \text{then
} \mathbf{Y}=\mathbf{Y}'-\{\Sigma\in
\mathbf{Y}'|\chi_{V'}(\lambda_{j-1})=l_{j-1}-1\}(\text{an open dense
subset in }\mathbf{Y}'),\\&&\dim \mathbf{Y}=2j-2.
\end{eqnarray*}

(iii) Assume $i_0=\lambda_j$, $j\geq k+2$ and
$\lambda_j\geq\lambda_{j+1}+1$.
\begin{eqnarray*}&&\rc'=W_{l_1}(\lambda_1)\oplus\cdots\oplus
D(\lambda_{k+1})\oplus \cdots\oplus
W_{\lambda_j-1}(\lambda_j-1)\oplus\cdots\oplus
W_{\lambda_s}(\lambda_s),\\
&&\mathbf{Y}=\{\tk t^{\lambda_j-1}w|t^{\lambda_j}w=0,w\notin\text{Im
} t,\alpha(t^{\lambda_j-1}w)= 0\},\dim \mathbf{Y}=2j-2.
\end{eqnarray*}

(iv) Assume $i_0=\lambda_{k+1}$ and $l_k=\lambda_k$.
{\small\begin{eqnarray*}&&\rc'=W_{l_1}(\lambda_1)\oplus \cdots\oplus
W_{l_{k-1}}(\lambda_{k-1})\oplus D(\lambda_k)\oplus
W_{\lambda_{k+1}-1}(\lambda_{k+1}-1)\oplus
W_{\lambda_{k+2}}(\lambda_{k+2})\oplus\cdots\oplus
W_{\lambda_s}(\lambda_s),
\\
&&\mathbf{Y}=\{\tk
(t^{\lambda_{k+1}-1}w+t^{\lambda_k-1}w')|t^{\lambda_{k+1}-1}w\text{
spans } V^\p, t^{\lambda_k}w'=0,w'\notin\text{Im
}t,\alpha(t^{\lambda_k-1}w')=\alpha(t^{\lambda_{k+1}-1}w)\},\\&&\dim
\mathbf{Y}=2k-1.
\end{eqnarray*}}

(v) Assume $i_0=\lambda_{k+1}-1$ and
$\lambda_{k+1}-2\geq\lambda_{k+2}$. Let $$\mathbf{Y}'=\{\tk
t^{\lambda_{k+1}-2}w|t^{\lambda_{k+1}-1}w=0,w\notin\text{Im
}t,\alpha(t^{\lambda_{k+1}-2}w)=0\}.$$
\begin{eqnarray*}&&\rc'=W_{l_1}(\lambda_1)\oplus\cdots\oplus
W_{l_k}(\lambda_k)\oplus D(\lambda_{k+1}-1)\oplus
W_{\lambda_{k+2}}(\lambda_{k+2})\oplus\cdots\oplus
W_{\lambda_s}(\lambda_s),\\
&&\mathbf{Y}=\mathbf{Y}'\text{ except if
}\lambda_a-l_a>\lambda_k-l_k\text{ for all }\lambda_a>\lambda_{k},
\text{ and }l_{k}=\lambda_{k+1}>\frac{\lambda_{k}+1}{2},\\&&
\qquad\text{ then } \mathbf{Y}=\mathbf{Y}'-\{\Sigma\in
\mathbf{Y}'|\chi_{V'}(\lambda_{k})=l_{k}-1\}(\text{an open dense
subset in }\mathbf{Y}'),\\&&\dim \mathbf{Y}=2k.
\end{eqnarray*}

\subsection{}\label{ssec-3}

Let $x,\rc'$, $\mathbf{Y},\bX$ be as in \ref{ssec-yc}. Assume
$\Sigma\in \mathbf{Y}\subset
\mathbb{P}(W_{i_0})-\mathbb{P}(W_{i_0+1})$. Let $X(\Sigma)$ be the
set of nondegenerate submodules $M$ of $V$ satisfying the following
conditions:

\smallskip

 c1) $\Sigma\subset M$ and $M$ has no proper submodule
containing $\Sigma$,

 c2) $\chi_M(i_0)=\chi_V(i_0)$. Moreover, in case $\mathrm{(iv)}$
of   \ref{ssec-yc}, $\chi_M(\lambda_k)=\chi_V(\lambda_k)$.

\smallskip

\noindent We describe the set $X(\Sigma)$ in the cases (i)-(v) of
  \ref{ssec-yc} in the following.
\begin{enumerate}
\item[(i)] Let $\Sigma=\tk v\in \mathbf{Y}$, where $v=t^{\lambda_j-1}w$. There
exists $v'\in W_{\lambda_j}-W_{\lambda_{j+1}}$, such that
$\beta(v',w)\neq 0$. Take $w'$ such that $v'=t^{\lambda_j-1}w'$.
Then $M=Aw\oplus Aw'\in X(\Sigma)$ and every module in $X(\Sigma)$
is obtained in this way. It is easily seen that
$M=W_{l_j}(\lambda_j)$.
\item[(ii)] Let $\Sigma=\tk v\in \mathbf{Y}$, where $v=t^{\lambda_j-1}w$. There exist
$v'=t^{\lambda_j-1}w'\in W_{\lambda_j}-W_{\lambda_{j+1}}$, such that
$\beta(v',w)\neq 0$ and $\alpha(t^{l_j-1}w')\neq 0$. Then
$M=Aw\oplus Aw'\in X(\Sigma)$ and every module in $X(\Sigma)$ is
obtained in this way. It is easily seen that $M=W_{l_j}(\lambda_j)$.
\item[(iii)] Let $\Sigma=\tk v\in \mathbf{Y}$, where $v=t^{\lambda_j-1}w$. There
exists $v'=t^{\lambda_j-1}w'\in W_{\lambda_j}-W_{\lambda_{j+1}}$,
such that $\beta(v',w)\neq 0$ and $\alpha(t^{\lambda_j-1}w')\neq 0$.
Then $M=Aw\oplus Aw'\in X(\Sigma)$ and every module in $X(\Sigma)$
is obtained in this way. It is easily seen that
$M=W_{\lambda_j}(\lambda_j)$.
\item[(iv)] Let $\Sigma=\tk v\in \mathbf{Y}$, where
$v=t^{\lambda_{k+1}-1}w+t^{\lambda_k-1}w'$. There exists
$v_1=t^{\lambda_{k+1}-2}w_1\in
W_{\lambda_{k+1}-1}-W_{\lambda_{k+1}}$ such that $\beta(w,v_1)\neq
0$ and $v_1'=t^{\lambda_k-1}w_1'\in
W_{\lambda_{k}}-W_{\lambda_{k}+1}$ such that
$\beta(v_1',t^{\lambda_k-1}w')\neq 0$. Then $M=Aw\oplus Aw_1\oplus
Aw'\oplus Aw_1'\in X(\Sigma)$ and every module in $X(\Sigma)$ is
obtained in this way. It is easily seen that
$M=W_{\lambda_k}(\lambda_k)\oplus D(\lambda_{k+1})$.
\item[(v)] Let $\Sigma=\tk v\in \mathbf{Y}$, where $v=t^{\lambda_{k+1}-2}w$. There
exists $v'=t^{\lambda_{k+1}-1}w'\in
W_{\lambda_{k+1}}-W_{\lambda_{k+1}+1}$ such that $\beta(w',v)\neq
0$. Then $M=Aw\oplus Aw'\in X(\Sigma)$ and every module in
$X(\Sigma)$ is obtained in this way. It is easily seen that
$M=D(\lambda_{k+1})$.
\end{enumerate}
\subsection{}
Let $M\in X(\Sigma)$ and $M^\p=\{v\in V|\beta(v,M)=0\}$. Then $M^\p$
is a non-degenerate submodule of $V$. In cases (i)-(iii) of
\ref{ssec-yc}, we have that $V=M\oplus M^\p$. In cases (iv)-(v), we
have $V=M+M^\p$ and $M\cap M^\p=V^\p$. The nondegenerate submodule
$M^\p$ has orthogonal decomposition $M^\p=M'\oplus D(1)$, where $M'$
is a non-defective submodule. Hence $V=M'\oplus M$ (direct sum of
orthogonal submodules). Now the map $t':\Sigma^\p/\Sigma\rightarrow
\Sigma^\p/\Sigma$ induced by $t$ is given by the form module
$\frac{\Sigma^\p\cap M}{\Sigma}\oplus M'$, where $M'$ is defined as
above in cases (iv)-(v) and $M'=M^\p$ in cases (i)-(iii). We write
$\tilde{M}=\frac{\Sigma^\p\cap M}{\Sigma}$.

We explain case (ii) of \ref{ssec-yc} in detail and the other cases
are similar. In this case
$M=W_{l_j}(\lambda_j),\tilde{M}=W_{l_j-1}(\lambda_j-1)$. Recall that
$\chi_{W_{l_j}}(\lambda_j)=[\lambda_j:l_j]$, where
$[m:l]:\mathbb{N}\rightarrow\mathbb{N}$ is defined by
$[m;l](k)=\max\{0,\min\{k-m+l,l\}\}.$ We have that
$\chi(\lambda_i)=\max\{\chi_{M'}(\lambda_i),\chi_{M}(\lambda_i)\}$
and
$\chi'(\lambda_i)=\max\{\chi_{M'}(\lambda_i),\chi_{\tilde{M}}(\lambda_i)\}$.
One easily check that $\chi(\lambda_i)=\chi'(\lambda_i)$ for $i\geq
j+1$,  $\chi(\lambda_j-1)=\chi'(\lambda_j-1)=l_j-1$, and
$\chi(\lambda_i)=l_i=\max\{\chi_{M'}(\lambda_i),l_j\},
\chi'(\lambda_i)=\max\{\chi_{M'}(\lambda_i),l_j-1\}$ for $i\leq
j-1$.

If $l_{j-1}>l_j$, then $l_{i}>l_j$, $\forall\ i\leq j-1$. It follows
that $\chi_{M'}(\lambda_i)=l_i$ and thus $\chi'(\lambda_i)=l_i$,
$\forall\ i\leq j-1$. Assume $l_{j-1}=l_j$ and there exists some
$\lambda_i>\lambda_{j-1}$ such that
$\lambda_i-l_i=\lambda_{j-1}-l_{j-1}$, then $l_i>l_j$ and
$\chi_{M'}(\lambda_i)=l_i$. It follows that
$\chi_{M'}(\lambda_{j-1})=\lambda_{j-1}-\lambda_i+l_i=l_{j-1}$ and
thus $\chi_{M'}(\lambda_i)=l_i,\forall\ i\leq j-1$. Assume
$l_{j-1}=l_j\geq[(\lambda_{j-1}+1)/2]+1$ and for all
$\lambda_i>\lambda_{j-1}$, $\lambda_i-l_i>\lambda_{j-1}-l_{j-1}$.
Since we require $\chi_{V'}(\lambda_{j-1})=l_{j-1}$,
$\chi_{M'}(\lambda_{j-1})=l_{j-1}$ and thus
$\chi_{M'}(\lambda_i)=l_i,\forall\ i\leq j-1$. In any case,
$\chi'(\lambda_i)=l_i$, $\forall\ i\leq j-1$. Hence $\rc'$ is of the
form as stated.

\subsection{}\label{ssec-2} The form modules $(\Sigma^\p\cap M)/\Sigma$ are described in the
following.
\begin{enumerate}
\item Assume $x=W_{m}(2m)$, $m\geq 1$. Then
$\tilde{\cP}_x=\mathbb{P}(\ker x)$ and
$f_x(\tilde{\cP}_x)=W_{m}(2m-1)$.

\item Assume $x=W_{m+1}(2m+1)$, $m\geq 1$. Then
$\tilde{\cP}_x=\mathbb{P}(\ker x)$, $\mathbf{Y}_1=f_x^{-1}(W_m(2m))$
consists of two points and
$\mathbf{Y}_2=f_x^{-1}(W_{m+1}(2m))=\tilde{\cP}_x-\mathbf{Y}_1$.

\item Assume $x=W_l(m)$, $(m+1)/2<l<m$. Then
$\tilde{\cP}_x=\mathbb{P}(\ker x)$,
$\mathbf{Y}_1=f_x^{-1}(W_{l-1}(m-1))$ consists of one point and
$\mathbf{Y}_2=f_x^{-1}(W_l(m-1))=\tilde{\cP}_x-\mathbf{Y}_1$.

\item Assume $x=W_m(m)$, $m\geq 2$. Then $\tilde{\cP}_x$ consists one
point and $f_x(\tilde{\cP}_x)=W_{m-1}(m-1)$.

\item Assume $x=W_1(1)$. Then $\tilde{\cP}_x$ consists of two points
and $f_x(\tilde{\cP}_x)=\{0\}$.

\item Assume $x=W_m(m)\oplus D(k)$, $m\geq k\geq 1$. Then
$\tilde{\cP}_x=\mathbb{P}(\ker x\cap\alpha^{-1}(0))$ and
$f_x^{-1}(D(m)\oplus W_{k-1}(k-1))=\mathbb{P}((W_{k}-W_{k+1})\cap
\alpha^{-1}(0))$.

\item Assume $x=D(m)$, $m\geq 2$. Then $\tilde{\cP}_x$ consists of
one point and $f_x(\tilde{\cP}_x)=D(m-1)$.
\end{enumerate}

\subsection{}
We prove Proposition \ref{prop-gps} for $O(2n+1)$. The proof for
$O(2n)$ is entirely similar and simpler. We use similar ideas as in
\cite{Spal2}. We first show that $Z_G(x)$ acts transitively on
$\mathbf{Y}$. Consider
$\widetilde{\mathbf{Y}}^*=\{(\Sigma,M)|\Sigma\in \mathbf{Y},M\in
X(\Sigma)^*\}$, where $X(\Sigma)^*$ is the nonempty subset $\{M\in
X(\Sigma)|\chi_{M^\p}(\lambda_a)=\chi_{V}(\lambda_a),\forall a\neq
j\text{ in cases (i)-(iii)}, a\neq k,k+1\text{ in case (iv) and
}a\neq k+1\text{ in case (v)}\}$  in $X(\Sigma)$. For $M\in
pr_2(\widetilde{\mathbf{Y}}^*)$, the equivalence classes of $M,M^\p$
do not depend on the choice of $\Sigma\in \mathbf{Y}$ such that
$(\Sigma,M)\in\widetilde{\mathbf{Y}}^*$. It follows that $Z_G(x)$
acts transitively on $pr_2(\widetilde{\mathbf{Y}}^*)$.

Fix $\Sigma\in \mathbf{Y}$ and $M\in X(\Sigma)^*$. Let $Z_M$ be the
stabilizer of $M$ in $Z_G(x)$. The quadratic form $\alpha$ on $V$
restricts to nondegenerate quadratic forms on $M$, $M^\p$ (or $M'$,
if $M'\neq M^\p$). Let $G(M)$, $G(M^\p)$ (or $G(M')$) be the groups
preserving the respective quadratic forms and $\Lg(M)$, $\Lg(M^\p)$
(or $\Lg(M')$) the Lie algebras. Let $x_M$, $x_{M^\p}$ (or $x_{M'}$)
be the restriction of $x$ on $M$, $M^\p$ (or $M'$) respectively.
Then $x_M\in\Lg(M)$, $x_{M^\p}\in\Lg(M^\p)$ (or $x_{M'}\in\Lg(M')$).
We have that $Z_M$ is isomorphic to $Z_{G(M)}(x_M)\times
Z_{G(M^\p)}(x_{M^\p})$. Set
$\widetilde{\mathbf{Y}}_M^*=pr_2^{-1}(M)=\{\Sigma\in \mathbf{Y}|M\in
X(\Sigma)^*\}$. By examining the cases (1)-(7) from \ref{ssec-2} we
see that $Z_M$ acts transitively on $\widetilde{\mathbf{Y}}_M^*$.
Thus $Z_G(x)$ acts transitively on $\widetilde{\mathbf{Y}}^*$ and
hence acts transitively on
$\mathbf{Y}=pr_1(\widetilde{\mathbf{Y}}^*)$.

Let $Z_\Sigma$ be the stabilizer of $\Sigma$ in $Z_G(x)$. The
morphism ${A}_P\rightarrow {A}'(x')/A'_P$ is induced by the natural
morphism $Z_\Sigma/Z_\Sigma^0\rightarrow A'(x')$. Since
$X(\Sigma)^*$ is irreducible, $Z_{\Sigma,M}=Z_\Sigma\cap Z_M$ meets
all the irreducible components of $Z_\Sigma$. Thus to study the
morphism $A_P\rightarrow A'(x')/A'_P$, it suffices to study the
natural morphism $Z_{\Sigma,M}/Z_{\Sigma,M}^0\rightarrow A'(x')$.

Let $x_{\tilde{M}}$ be the endomorphism of $\tilde{M}=(\Sigma^\p\cap
M)/\Sigma$ induced by $x_M$. Then $x_{\tilde{M}}\in \Lg(\tilde{M})$.
Let
$A'(x_{\tilde{M}})=Z_{G(\tilde{M})}(x_{\tilde{M}})/Z_{G(\tilde{M})}^0(x_{\tilde{M}})$.
Let $Z=\{z\in Z_{G(M)}(x_M)|z\Sigma=\Sigma\}$. We have a natural
isomorphism $Z_{\Sigma,M}\cong Z\times Z_{G(M^\p)}({x_{M^\p}})$ and
$Z_{\Sigma,M}/Z_{\Sigma,M}^0\cong Z/Z^0\times A(x_{M^\p})$. The
morphism $A(x_{M^\p})\rightarrow A'(x')$ is the one obtained as
follows. Note that $A(x_{M^\p})$ is naturally isomorphic to
$A(x_{M'})$. The system of generators of $A(x)$ is the union of the
generators of $A(x_M)$ and $A(x_{M^\p})$ and the morphism
$A(x_M)\times A(x_{M^\p})\rightarrow A(x)$ is equal to the one
induced by $Z_{G(M)}(x_M)\times Z_{G(M^\p)}(x_{M^\p})\cong
Z_M\subset Z_G(x)$. On the other hand, we have a morphism
${A}'(x_{\tilde{M}})\times A(x_{M'})\rightarrow A'(x')$ which comes
from the isomorphism $\Sigma^\p/\Sigma\cong \tilde{M}\oplus M'$ and
it is given by the system of generators. Hence the map
$A(x_{M^\p})\hookrightarrow Z_{\Sigma,M}/Z_{\Sigma,M}^0\to A'(x')$
is given by generators. It remains to identify the morphism
$Z/Z^0\rightarrow A'(x')$.

We can show by explicit calculation on the cases (1)-(7) in
  \ref{ssec-2} that the natural morphism $Z/Z^0\rightarrow
A(x_M)$ is injective and the image is generated by
$\{\epsilon_j|\lambda_j'\neq\lambda_j, \epsilon_j$ belongs to the
system of generators of ${A}(x_M)$ and $A'(x_{\tilde{M}})\}$. Using
this description of $Z/Z^0$ and the above description of the
morphism ${A}'(x_{\tilde{M}})\times A(x_{M'})\rightarrow A'(x')$, we
see that the morphism $Z/Z^0\rightarrow A'(x')$ is given by the
system of generators. So we have obtained a complete description of
the morphism $Z_{\Sigma,M}/Z_{\Sigma,M}^0\rightarrow A'(x')$ and we
deduce easily that $A_P'$ and the homomorphism $A_P\rightarrow
A'(x')/A'_P$ are as in Proposition \ref{prop-gps}.

\section{dual of symplectic Lie algebras}\label{sec-dsp}

Assume that $G=Sp(V)$ in this section.

\subsection{}\label{comp-sym-d}Let $\xi\in\Lg^*$ be nilpotent and $\alpha_\xi$, $T_\xi$ defined for $\xi$ as in \ref{ssec-spd}. The $G$-orbit
$\rc$ of $\xi$ is characterized by the following data (\cite{X2}):

(d1) The sizes of the Jordan blocks of $T_\xi$ give rise to a
partition of $2n$. We write it as
$\lambda_1\leq\lambda_2\leq\cdots\leq\lambda_{2s+1}$, where
$\lambda_1=0$.

(d2) For each $\lambda_i$, there is an integer $\chi(\lambda_i)$
satisfy $\frac{\lambda_i-1}{2}\leq\chi(\lambda_i)\leq\lambda_i$.
Moreover,
$\chi(\lambda_i)\geq\chi(\lambda_{i-1}),\lambda_i-\chi(\lambda_i)\geq\lambda_{i-1}-\chi(\lambda_{i-1})$,
$i=2,\ldots,2s+1$.

Then $m(\lambda_i)$ is even for each $\lambda_i>0$. We write $\xi$
(or $\rc$)
$=(\lambda,\chi)=(\lambda_{2s+1})_{\chi(\lambda_{2s+1})}\cdots(\lambda_1)_{\chi(\lambda_1)}$.
The component group $A(\xi)=Z_G(\xi)/Z_G^0(\xi)$ can be described as
follows (\cite{X2}). Let $\epsilon_i$ correspond to $\lambda_i$.
Then $A(\xi)$ is isomorphic to the abelian group generated by
$\{\epsilon_i|\chi(\lambda_i)\neq (\lambda_i-1)/2\}$ with relations
\begin{enumerate}
\item[(r1)] $\epsilon_i^2=1$,
\item[(r2)] $\epsilon_i=\epsilon_{i+1}\text{ if
}\chi(\lambda_i)+\chi(\lambda_{i+1})\geq\lambda_{i+1}$,
\item[(r3)] $\epsilon_i=1$, if $\lambda_i=0$.
\end{enumerate}

\subsection{}\label{lem-p-1} Let $P$ be the stabilizer of a line $\Sigma=\{\tk
v\}\subset V$ in $G$.

\begin{lem}
$\xi\in\Lp^*$ if and only if $\alpha_\xi(v)=0$ and $T_\xi(v)=0$.
\end{lem}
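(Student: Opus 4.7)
The plan is to translate the defining condition $\xi(\Ln_P)=0$ into the two vanishing statements on $\alpha_\xi(v)$ and $T_\xi v$ by parametrizing $\Ln_P$ explicitly in a Witt-style decomposition of $V$. First I would pick $v^*\in V$ with $\beta(v,v^*)=1$, set $\Sigma'=\tk v^*$, and $W=\Sigma^\p\cap(\Sigma')^\p$, so that $V=\Sigma\oplus W\oplus\Sigma'$ is an orthogonal direct sum of the hyperbolic plane $\Sigma\oplus\Sigma'$ and the non-degenerate symplectic space $W$.

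Every $x\in\Ln_P$ can then be written uniquely as $x=x_{c,w_0}$ for some $(c,w_0)\in\tk\times W$, where $x_{c,w_0}v=0$, $x_{c,w_0}w=\beta(w,w_0)v$ for $w\in W$, and $x_{c,w_0}v^*=cv+w_0$; a routine check shows $x_{c,w_0}\in\mathfrak{sp}(V)$ and in fact lies in $\Ln_P$, and the count $1+\dim W=2n-1=\dim\Ln_P$ shows the parametrization is exhaustive. Picking any $X\in\End(V)$ with $\xi=\tr(X-)$, a direct trace computation in the basis $\{v,w_1,\ldots,w_{2n-2},v^*\}$ yields the key identity $\tr(Xx_{c,w_0})=c\,\alpha_\xi(v)+\beta(Xv,w_0)+\beta(v,Xw_0)=c\,\alpha_\xi(v)+\beta_\xi(v,w_0)=c\,\alpha_\xi(v)+\beta(T_\xi v,w_0)$, where one uses $\beta(v,v)=\beta(v^*,v^*)=0$, $\beta(v,W)=\beta(v^*,W)=0$, symmetry of $\beta$ in characteristic two, and the definitions of $\alpha_\xi$, $\beta_\xi$, $T_\xi$. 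Consequently $\xi(\Ln_P)=0$ if and only if $\alpha_\xi(v)=0$ and $\beta(T_\xi v,w_0)=0$ for every $w_0\in W$.

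The reverse implication of the lemma is then immediate. For the forward direction I would combine $\beta(T_\xi v,W)=0$ with the automatic identity $\beta(T_\xi v,v)=\beta_\xi(v,v)=2\beta(v,Xv)=0$ (characteristic two) to conclude that $T_\xi v\in(\Sigma\oplus W)^\p=\Sigma$, i.e.\ $T_\xi v=av$ for some $a\in\tk$. The single step that requires more than formal trace manipulations is here: since $\xi$ is nilpotent throughout section~\ref{sec-dsp}, $T_\xi$ is also nilpotent, so the relation $T_\xi^k v=a^k v$ forces $a=0$ and hence $T_\xi v=0$. The main obstacle in the argument is thus precisely this appeal to nilpotency to upgrade the a priori weaker conclusion $T_\xi v\in\Sigma$ to $T_\xi v=0$; everything else reduces to the trace bookkeeping in the Witt decomposition.
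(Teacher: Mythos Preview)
Your argument is correct and follows essentially the same route as the paper: both parametrize $\Ln_P$ via a Witt-type splitting (the paper uses an explicit symplectic basis $v_1,\ldots,v_{2n}$ with $v_{n+1}$ playing the role of your $v^*$), compute $\tr(Xx)$ to obtain $\alpha_\xi(v)=0$ and $T_\xi v\in\Sigma$, and then invoke nilpotency of $T_\xi$ to force $T_\xi v=0$. Your coordinate-free packaging is slightly cleaner, but the logic is identical.
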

\begin{proof}
$P$ is the stabilizer of the flag $\{0\subset\{\tk v\}\subset\{\tk
v\}^\p\subset V\}$. Write $v_1=v$. There exists vectors $v_i$,
$i=2,\ldots,2n$ such that $v_i$, $i=1,\ldots,2n$ span $V$ and
$\beta(v_i,v_j)=\delta_{j,i+n}$, $i\leq j$. Let $x\in\Ln_P$. We have
$xv_1=0$, $xv_i=a_iv_1$,$i\neq 1,n+1$ and
$xv_{n+1}=bv_1+\sum_{i=2}^na_{n+i}v_i+\sum_{i=2}^{n}a_iv_{n+i}$.
Assume $\xi(x')=\tr(Xx')$ for any $x'\in\Lg$. A straightforward
calculation shows that $\tr(Xx)=\sum_{i=2}^n
a_i\beta_\xi(v_1,v_{n+i})+\sum_{i=2}^{n}
a_{n+i}\beta_\xi(v_1,v_{i})+b\alpha_\xi(v_1)$. Moreover,
$T_\xi(v_1)=\sum_{j=1}^n\beta_\xi(v_1,v_{n+j})v_j+\sum_{j=1}^n\beta_\xi(v_1,v_j)v_{n+j}$.

We have $\xi\in\Lp^*$ if and only if $\xi(x)=0$ for any $x\in \Ln_P$
if and only if
$\beta_\xi(v_1,v_i)=\beta_\xi(v_1,v_{n+i})=0$,$i=2,\ldots,n$ and
$\alpha_\xi(v_1)=0$. Thus $\xi\in\Lp^*$ if and only if
$\alpha_\xi(v_1)=0$ and $T_\xi(v_1)=av_1$ for some $a\in k$. Since
$T_\xi$ is nilpotent, $T_\xi(v_1)=av_1$ if and only if $a=0$. The
lemma is proved.
\end{proof}

\subsection{}\label{prop-dsp}Assume $\rc'=(\lambda',\chi')\in f_\xi(\cP_\xi)$,
$\mathbf{Y}=f_\xi^{-1}(\rc')$ and $\bX=\varrho_\xi^{-1}(\mathbf{Y})$
(see \ref{ssec-gps-d}).

\begin{prop}
We have $\dim \bX=\dim \cB_\xi$ if and only if $(\lambda',\chi')$
satisfies:

Assume $\lambda_{i+1}=\lambda_{i}>\lambda_{i-1}$.
$\lambda_j'=\lambda_j$, $j\neq i+1,i$,
$\lambda_{i+1}'=\lambda_{i+1}-1, \lambda_i'=\lambda_i-1$,
$\chi'(\lambda_j')=\chi(\lambda_j)$, $j\neq i,i+1$ and
$\chi'(\lambda_i')=\chi'(\lambda_{i+1}')\in\{\chi(\lambda_i),\chi(\lambda_i)-1\}$
satisfies $[\lambda_i'/2]\leq\chi'(\lambda_i')\leq\lambda_i'$,
$\chi(\lambda_{i-1})\leq\chi'(\lambda_i')\leq\chi(\lambda_{i-1})+\lambda_i-\lambda_{i-1}-1$.

We have $\dim \mathbf{Y}=2s-i+1$ if
$\chi'(\lambda_i')=\chi(\lambda_i)$ and $\dim \mathbf{Y}=2s-i$ if
$\chi'(\lambda_i')=\chi(\lambda_i)-1$.
\end{prop}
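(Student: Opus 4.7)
The plan is to adapt Spaltenstein's argument for Proposition \ref{prop-1} (the orthogonal Lie algebra case) to the dual symplectic setting, using Lemma \ref{lem-p-1} as the replacement for the ``$\xi$ in $\Lp^*$'' criterion that is automatic on the Lie algebra side.

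First I would parametrize $\cP_\xi$. By Lemma \ref{lem-p-1}, the parabolics $P\in\cP$ containing $\xi$ in $\Lp^*$ are exactly the stabilizers of lines $\Sigma=\tk v\subset V$ with $T_\xi(v)=0$ and $\alpha_\xi(v)=0$, so $\cP_\xi\cong \mathbb{P}(\ker T_\xi\cap\alpha_\xi^{-1}(0))$. Next, for each such $\Sigma$, the image $\xi'$ of $\xi$ under $\pi_{\Lp^*}$ determines nilpotent data on $\Sigma^\p/\Sigma$, namely $T_{\xi'}$ and $\alpha_{\xi'}$ induced from $T_\xi$ and $\alpha_\xi$ (this is the analogue of passing from the form module $V$ to $V'=\Sigma^\p/\Sigma$ as in \ref{ssec-yc}). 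I would then stratify $\cP_\xi$ according to the integer $i_0\geq 1$ with $v\in W_{i_0}-W_{i_0+1}$, where $W_i=\ker T_\xi\cap\mathrm{Im}(T_\xi^{i-1})$, and run through the resulting cases.

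The heart of the proof is a dimension count. For each stratum, one computes on one hand $\dim \mathbf{Y}$ as the dimension of the space of admissible $v$'s (that is, vectors in an appropriate $W_{i_0}-W_{i_0+1}$ additionally satisfying the $\alpha_\xi$- and $\chi$-conditions imposed by $\rc'$), and on the other hand $\dim \bX-\dim \cB_\xi$ via the fibration $\varrho_\xi:\bX\to \mathbf{Y}$ whose generic fibre has the expected dimension $\dim \cB_{\xi'}$. Equality $\dim \bX=\dim \cB_\xi$ then forces tight constraints on how $(\lambda',\chi')$ can differ from $(\lambda,\chi)$. The crucial point, distinguishing this case from the orthogonal one, is that the constraint $T_\xi(v)=0$ (rather than only $\alpha_\xi(v)=0$) prevents the analogue of Proposition \ref{prop-1}(a) from occurring: one cannot shave a single box off a strictly isolated part $\lambda_{i+2}$ with $\chi(\lambda_{i+2})=\lambda_{i+2}$ because the relevant $v$ would have to lie in $W_{\lambda_{i+2}}$, which has codimension forcing $\dim \bY$ to drop below $\dim \cB_\xi-\dim\cB_{\xi'}$. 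Thus only the ``case (b)'' type modifications survive; the range $[\lambda_i'/2]\leq\chi'(\lambda_i')\leq\lambda_i'$ (instead of $\lambda_i'/2\leq\chi'(\lambda_i')$ as in the orthogonal case) reflects that $\chi(\lambda)$ in the dual symplectic case is allowed to be $(\lambda-1)/2$, per \ref{comp-sym-d}(d2).

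Finally, to read off $\dim \mathbf{Y}$ in the two subcases $\chi'(\lambda_i')=\chi(\lambda_i)$ and $\chi'(\lambda_i')=\chi(\lambda_i)-1$, I would argue exactly as in \ref{ssec-yc}(i)--(ii): the first subcase requires $\alpha_\xi(T_\xi^{\chi(\lambda_i)-1}w)\neq 0$, an open condition, while the second requires $\alpha_\xi(T_\xi^{\chi(\lambda_i)-1}w)=0$, a codimension-one condition (with the usual exceptional loci removed). The main obstacle I expect is the bookkeeping in the case analysis, particularly verifying that all other potential degenerations (e.g.\ dropping $\lambda_j$ in isolation, or modifying $\chi$ on a part different from $\lambda_i,\lambda_{i+1}$) produce strictly smaller $\dim \bX$; this is the part where the absence of an analogue of (a) from Proposition \ref{prop-1} needs a careful direct check using Lemma \ref{lem-p-1} and the nilpotency of $T_\xi$.
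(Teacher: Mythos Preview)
Your overall approach matches the paper's: use Lemma \ref{lem-p-1} to identify $\cP_\xi$ with $\mathbb{P}(\ker T_\xi\cap\alpha_\xi^{-1}(0))$, pass to $\Sigma^\perp/\Sigma$, stratify by the index $i_0$ with $v\in W_{i_0}\setminus W_{i_0+1}$, and run the same dimension count as in the orthogonal case. The paper says exactly this (``proved entirely similarly as in the orthogonal Lie algebra case'') and only records the explicit descriptions of $\mathbf{Y}$ in the two surviving subcases, which are the analogues of \ref{ssec-yc}(i) and (ii).

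However, your explanation for why the analogue of Proposition \ref{prop-1}(a) does not occur is wrong. You claim the distinguishing feature is the constraint $T_\xi(v)=0$ ``rather than only $\alpha_\xi(v)=0$'', but in the orthogonal Lie algebra case one \emph{also} has $v\in\ker x$: see \ref{ssec-yc}, where $\cP_x$ is identified with $\mathbb{P}(\ker x\cap\alpha^{-1}(0))$. So this is not a point of contrast. The actual reason case (a) is absent is structural: for $\xi\in\mathfrak{sp}(2n)^*$ every nonzero part $\lambda_i$ has even multiplicity (\ref{comp-sym-d}), so the form-module decomposition $V=\bigoplus_j{}^*W_{l_j}(\lambda_j)$ contains no defective summand $D(\lambda)$. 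Case (a) of Proposition \ref{prop-1} corresponds precisely to cases (iv)--(v) of \ref{ssec-yc}, which involve the $D$ module present only in the odd orthogonal situation; with no such summand here, those cases simply do not arise. Once you replace your justification with this observation, the case analysis collapses to the two cases the paper records, and the rest of your argument goes through as written.
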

\subsection{}\label{prop-gps-d1} From now on let $\rc'$ be as in Proposition \ref{prop-dsp}.
Let $A_P$ and $A_{P}'$ be as in \ref{ssec-gps-d}.
\begin{prop}
The group $Z_G(\xi)$ acts transitively on $\mathbf{Y}$. The group
$A_P$ is the subgroup of $A(\xi)$ generated by the elements
$\epsilon_i$ which appear both in the generators of $A(\xi)$ and of
$A'(\xi')$. The group $A_P'$ is the smallest subgroup of $A'(\xi')$
such that the map $A_P\rightarrow A'(\xi')/A_P'$ given by
$\epsilon_i\mapsto \epsilon_i'$ is a morphism.
\end{prop}

\begin{coro}
The variety $\mathbf{Y}$ has two irreducible components (and
$|A(\xi)/A_P|=2$) if $\rc'$ is as in Proposition with
$\chi(\lambda_i)=\frac{\lambda_i}{2},
\lambda_{i+2}-\chi(\lambda_{i+2})>\lambda_{i}/2$ and
$\chi'(\lambda_i')=\chi(\lambda_i)-1$.

In this case, suppose
$D=\{1,\epsilon_i\}=\{1,\epsilon_{i+1}\}\subset {A}(\xi)$, then
${A}(\xi)=D\times {A}_P$. In the other cases, $\mathbf{Y}$ is
irreducible and ${A}_P={A}(\xi)$.
\end{coro}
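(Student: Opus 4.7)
The plan is to reduce the statement to a direct combinatorial check on the generators and relations of $A(\xi)$, using Proposition~\ref{prop-gps-d1} which I have just established. Since $Z_G(\xi)$ acts transitively on $\mathbf{Y}$ with stabilizer whose image in $A(\xi)$ is $A_P$, the number of irreducible components of $\mathbf{Y}$ equals the index $[A(\xi):A_P]$. Everything therefore reduces to identifying $A_P$ inside $A(\xi)$, where $A_P$ is described as the subgroup generated by those $a_j$ that appear simultaneously as generators of $A(\xi)$ and of $A'(\xi')$ under the identification $a_j\leftrightarrow a'_j$.

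The partitions $\lambda$ and $\lambda'$ and the sequences $\chi$ and $\chi'$ coincide outside the indices $i$ and $i+1$, so for $j\neq i,i+1$ the element $a_j$ is a generator of $A(\xi)$ if and only if $a'_j$ is a generator of $A'(\xi')$. It therefore suffices to decide when $a_i$ (equivalently $a_{i+1}$, since $\chi(\lambda_i)=\chi(\lambda_{i+1})$ by monotonicity and relation (r2) then forces $a_i=a_{i+1}$ whenever both are generators) remains a generator of $A'(\xi')$. Using the defining condition $\chi(\lambda_j)\neq(\lambda_j-1)/2$ applied to $\lambda'_i=\lambda_i-1$, a case split on $\chi'(\lambda'_i)\in\{\chi(\lambda_i),\chi(\lambda_i)-1\}$ shows that $a'_i$ fails to be a generator of $A'(\xi')$ exactly when $\chi'(\lambda'_i)=(\lambda_i-2)/2$. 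Combined with the constraint $\chi'(\lambda'_i)\geq\lfloor\lambda'_i/2\rfloor$ from Proposition~\ref{prop-dsp}, this forces $\chi'(\lambda'_i)=\chi(\lambda_i)-1$ together with $\chi(\lambda_i)=\lambda_i/2$ (hence $\lambda_i$ even). In every other configuration $a'_i$ remains a generator, so $a_i\in A_P$ and therefore $A_P=A(\xi)$, giving the irreducible case with a single component.

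In the exceptional configuration, both $a'_i$ and $a'_{i+1}$ are excluded from the defining generating set of $A_P$; it remains to verify that $a_i$ is also not contained in the subgroup generated by the other $a_j$ via relation (r2). Relation (r2) could force $a_i=a_{i-1}$ only if $\chi(\lambda_{i-1})+\chi(\lambda_i)\geq\lambda_i$, but the bound $\chi(\lambda_{i-1})\leq\chi'(\lambda'_i)=\lambda_i/2-1$ imposed in Proposition~\ref{prop-dsp} excludes this. Similarly $a_{i+1}=a_{i+2}$ would require $\chi(\lambda_{i+2})\geq\lambda_{i+2}-\lambda_i/2$, contradicted by the hypothesis $\lambda_{i+2}-\chi(\lambda_{i+2})>\lambda_i/2$. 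Hence no chain of (r2)-relations connects $a_i$ to any $a_j$ with $j\neq i,i+1$, so $D=\{1,a_i\}=\{1,a_{i+1}\}$ is a $\mathbb{Z}/2$-factor complementary to $A_P$, yielding $A(\xi)=D\times A_P$ and two irreducible components of $\mathbf{Y}$.

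The main obstacle is the bookkeeping in the last step: one must ensure that the combination of the two explicit inequalities with the monotonicity of $\chi(\lambda_j)$ and $\lambda_j-\chi(\lambda_j)$ suffices to insulate $a_i$ from every neighbor on both sides, so that the generator indexed by $i$ contributes an independent $\mathbb{Z}/2$ factor in $A(\xi)$ that collapses in $A'(\xi')$. All other manipulations are formal consequences of the generators-and-relations description of the component groups recalled in \ref{comp-sym-d} and the description of $A_P$ in Proposition~\ref{prop-gps-d1}.
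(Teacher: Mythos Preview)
Your approach is correct and is precisely the deduction the paper intends (the paper states this Corollary without separate proof, relying on the description of $A_P$ in Proposition~\ref{prop-gps-d1} and the presentation of $A(\xi)$ in \ref{comp-sym-d}).

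There is, however, one case your write-up skips. Your second paragraph shows that $a'_i$ fails to be a generator of $A'(\xi')$ exactly when $\chi(\lambda_i)=\lambda_i/2$ and $\chi'(\lambda'_i)=\chi(\lambda_i)-1$; your third paragraph then assumes the additional hypothesis $\lambda_{i+2}-\chi(\lambda_{i+2})>\lambda_i/2$ and deduces $A(\xi)=D\times A_P$. But you never return to the situation where the first two conditions hold while $\lambda_{i+2}-\chi(\lambda_{i+2})\leq\lambda_i/2$; this is neither covered by ``every other configuration'' (where you assumed $a'_i$ \emph{is} a generator) nor by your exceptional analysis. You should close this gap: monotonicity of $\lambda_j-\chi(\lambda_j)$ gives $\lambda_{i+2}-\chi(\lambda_{i+2})\geq\lambda_{i+1}-\chi(\lambda_{i+1})=\lambda_i/2$, so failure of the strict inequality forces equality, hence $\chi(\lambda_{i+1})+\chi(\lambda_{i+2})=\lambda_{i+2}$ and relation (r2) yields $a_i=a_{i+1}=a_{i+2}$ in $A(\xi)$. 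Since $a_{i+2}$ lies in the generating set of $A_P$ (as $\lambda'_{i+2}=\lambda_{i+2}$, $\chi'(\lambda'_{i+2})=\chi(\lambda_{i+2})$), one gets $a_i\in A_P$ and $A_P=A(\xi)$, completing the ``other cases''. With this sentence added your argument is complete.
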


\begin{coro}
 The group $A_P'$ is trivial,
except if  $\rc'$ is as in Proposition with
$\chi(\lambda_i)\neq\frac{\lambda_i}{2},
\chi(\lambda_{i+2})+\chi(\lambda_{i})=\lambda_{i+2}$ and
$\chi'(\lambda_i')=\chi(\lambda_i)-1$.

In this case, we have
${A}_P'=\{1,\epsilon_{i+1}'\epsilon_{i+2}'\}\subset {A}'(\xi')$.
\end{coro}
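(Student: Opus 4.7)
The plan is to read this corollary off Proposition~\ref{prop-gps-d1}, which already identifies $A_P'$ as the smallest subgroup of $A'(\xi')$ for which the assignment $a_j\mapsto a_j'$ descends to a well-defined homomorphism $A_P\to A'(\xi')/A_P'$. Concretely, $A_P'$ must be generated by the images of the (r2)-relations which hold in $A(\xi)$ (and involve only indices whose $a_j$ land in $A_P$) but which fail in $A'(\xi')$. Since $(\lambda',\chi')$ differs from $(\lambda,\chi)$ only at positions $i$ and $i+1$, the only relations that can possibly change are those for the index pairs $(i-1,i)$, $(i,i+1)$ and $(i+1,i+2)$.

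First I would dispose of the two harmless pairs. For $(i,i+1)$ we have $\lambda_i=\lambda_{i+1}$, $\lambda_i'=\lambda_{i+1}'$ and equal $\chi$- (resp.\ $\chi'$-) values, so both $a_i=a_{i+1}$ and $a_i'=a_{i+1}'$ hold unconditionally; nothing new is generated here. For $(i-1,i)$, the strict inequality $\lambda_{i-1}<\lambda_i$ together with the interval constraint
\[
\chi(\lambda_{i-1})\le \chi'(\lambda_i')\le \chi(\lambda_{i-1})+\lambda_i-\lambda_{i-1}-1
\]
from Proposition~\ref{prop-dsp} makes the inequality $\chi(\lambda_{i-1})+\chi(\lambda_i)\ge\lambda_i$ equivalent to $\chi(\lambda_{i-1})+\chi'(\lambda_i')\ge\lambda_i'$, so the presence or absence of this relation is unchanged.

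The only remaining pair $(i+1,i+2)$ is the source of the exceptional case. There $a_{i+1}=a_{i+2}$ in $A(\xi)$ iff $\chi(\lambda_i)+\chi(\lambda_{i+2})\ge\lambda_{i+2}$, whereas $a'_{i+1}=a'_{i+2}$ in $A'(\xi')$ iff $\chi'(\lambda_i')+\chi(\lambda_{i+2})\ge\lambda_{i+2}$. These can disagree only when $\chi'(\lambda_i')=\chi(\lambda_i)-1$ and $\chi(\lambda_i)+\chi(\lambda_{i+2})=\lambda_{i+2}$. For the obstruction to be genuinely nontrivial, $a'_{i+1}$ and $a'_{i+2}$ must both remain actual generators in $A'(\xi')$; the generator condition $\chi'(\lambda_i')\ne(\lambda_i'-1)/2$ translates to $\chi(\lambda_i)\ne\lambda_i/2$, matching the hypothesis. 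In this configuration, using (r1), the smallest subgroup of $A'(\xi')$ killing the discrepancy is $\{1,a'_{i+1}a'_{i+2}\}$; in every other configuration no relation is broken and $A_P'=1$.

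The main obstacle will be the bookkeeping at the $(i-1,i)$ pair under the four combinations of $\chi'(\lambda_i')\in\{\chi(\lambda_i),\chi(\lambda_i)-1\}$ with whether or not the (r2) relation is active, and the parallel check that $a_{i+2}\in A_P$ in the exceptional case so that the obstruction is genuinely forced. This is the same kind of case analysis that underlies the orthogonal corollary after Proposition~\ref{prop-1}, and I would organize it along the same lines.
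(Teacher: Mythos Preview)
Your approach is correct and is essentially the same as the paper's: the corollary is read off directly from Proposition~\ref{prop-gps-d1} by checking which of the relations (r1)--(r3) among the generators in $A_P$ fail to survive in $A'(\xi')$, and the paper treats this as an immediate consequence without further comment (the underlying proof of Proposition~\ref{prop-gps-d1} being declared ``entirely similar'' to the orthogonal case). One small overstatement: for the pair $(i-1,i)$ in the case $\chi'(\lambda_i')=\chi(\lambda_i)$, the two inequalities are not literally equivalent (the $A'$-condition is $\chi(\lambda_{i-1})+\chi(\lambda_i)\ge\lambda_i-1$, which is weaker), but you only need the forward implication ``relation in $A(\xi)$ $\Rightarrow$ relation in $A'(\xi')$'' and that holds, so the argument goes through.
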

\subsection{}
Propositions \ref{prop-dsp} and \ref{prop-gps-d1} are proved
entirely similarly as in the orthogonal Lie algebra case.  We
describe the orbits $\rc'$ and the varieties $\mathbf{Y}$. The
detail is omitted. Assume $\xi$ corresponds to the form module
$V={}^*W_{l_1}(\lambda_1)\oplus\cdots\oplus {}^*W_{l_s}(\lambda_s)$,
where $l_i=\chi(\lambda_i)$. (Notations are as in \cite{X2}.)

We regard $V$ as an $A=\tk[t]$-module by $\sum a_it^iv=\sum a_i
T_\xi^iv$. By Lemma \ref{lem-p-1}, we can identify $\cP_\xi$ with
$\mathbb{P}(W)$, where $W=\{v\in\ker t|\alpha_\xi(v)=0\}$. Let
$\Sigma=\tk v\in \mathbf{Y}$ and $\Sigma^\p=\{v'\in
V|\beta(v',\Sigma)=0\}$. The quadratic form $\alpha_\xi$ induces a
well-defined quadratic form
$\bar{\alpha}_\xi:\Sigma^\p/\Sigma\rightarrow \Sigma^\p/\Sigma$
(note that $\beta_\xi(\Sigma^\p,\Sigma)=0$) and $T_\xi$ induces a
linear map $\bar{T}_\xi:\Sigma^\p/\Sigma\rightarrow
\Sigma^\p/\Sigma$. Then $\bar{\alpha}_\xi$ defines an element
$\xi'\in\mathfrak{sp}(\Sigma^\p/\Sigma)^*={\Ll}^{*}$. Moreover,
$\xi'\in\rc'$, $\alpha_{\xi'}=\bar{\alpha}_\xi$ and
$T_{\xi'}=\bar{T}_\xi$. We have the following cases.

(i) Assume $1\leq j\leq s$, $\lambda_{j}-1\geq\lambda_{j+1}$ and
$\lambda_j-l_j-1\geq \lambda_{j+1}-l_{j+1}$.
\begin{eqnarray*}&&\rc'={{}^*W}_{l_1}(\lambda_1)\oplus\cdots\oplus
{{}^*W}_{l_j}(\lambda_j-1)\oplus\cdots\oplus {{}^*W}_{\lambda_s}(\lambda_s),\\
&&\mathbf{Y}=\{\tk t^{\lambda_j-1}w|t^{\lambda_j}w=0,w\notin\text{Im
}t,\alpha_\xi(t^{l_j-1}w)\neq0\},\ \dim \mathbf{Y}=2j-1.
\end{eqnarray*}

(ii) Assume $1\leq j\leq s$, $\lambda_{j}-1\geq\lambda_{j+1}$,
$l_j-1\geq l_{j+1}$ and $l_{j}-1\geq[(\lambda_{j}-1)/2]$. Let
$$\mathbf{Y}'=\{\tk t^{\lambda_j-1}w|t^{\lambda_j}w=0,w\notin\text{Im
}t,\alpha_\xi(t^{l_j-1}w)=0\}.$$
\begin{eqnarray*}&&\rc'={{}^*W}_{l_1}(\lambda_1)\oplus\cdots\oplus
{{}^*W}_{l_j-1}(\lambda_j-1)\oplus\cdots\oplus {{}^*W}_{l_s}(\lambda_s), \\
&&\mathbf{Y}=\mathbf{Y}'\text{ {except if}
}\lambda_a-l_a>\lambda_{j-1}-l_{j-1}\text{ for all
}\lambda_a>\lambda_{j-1}, \text{ and
}l_{j-1}=l_j>\frac{\lambda_{j-1}}{2},\\&& \qquad\text{ then }
\mathbf{Y}=\mathbf{Y}'-\{\Sigma\in
\mathbf{Y}'|\chi_{V'}(\lambda_{j-1})=l_{j-1}-1\}(\text{an open dense
subset in }\mathbf{Y}'),\\&&\dim \mathbf{Y}=2j-2.
\end{eqnarray*}

\section{dual of odd orthogonal Lie algebras}\label{sec-dor}

Assume that $G=O(2n+1)$ in this section.

\subsection{}\label{ssec-dor}
Let $\xi\in\Lg^*$ be nilpotent. Let $V=V_{2m+1}\oplus W$ be a normal
form of $\xi$, $\beta_\xi$ and $T_\xi:W\to W$ defined for $\xi$ as
in \ref{ssec-or}. The orbit $\rc$ of $\xi$ is characterized by the
following data (\cite{X2}):

(d1) An integer $0\leq m\leq n$.

(d2) The sizes of the Jordan blocks of $T_\xi$ give rise to a
partition of $2n-2m$. We write it as
$\lambda_1\leq\lambda_2\leq\cdots\leq\lambda_{2s}$.

(d3) For each $\lambda_i$, there is an integer $\chi(\lambda_i)$
satisfy $\frac{\lambda_i}{2}\leq\chi(\lambda_i)\leq\lambda_i$.
Moreover,
$\chi(\lambda_i)\geq\chi(\lambda_{i-1}),\lambda_i-\chi(\lambda_i)\geq\lambda_{i-1}-\chi(\lambda_{i-1})$,
$i=2,\ldots,2s$.

(d4) $m\geq\lambda_{2s}-\chi(\lambda_{2s})$.

Then $m(\lambda_i)$ is even for each $\lambda_i>0$. We write $\xi$
(or $\rc$)
$=(m;\lambda,\chi)=(m;(\lambda_{2s})_{\chi(\lambda_{2s})}\cdots(\lambda_1)_{\chi(\lambda_1)})$.
The component group $A(\xi)=Z_G(\xi)/Z_G^0(\xi)$ can be described as
follows (\cite{X2}). Let $\epsilon_i$ correspond to $\lambda_i$,
$i=1\ldots,2s$. Then $A(\xi)$ is isomorphic to the abelian group
generated by $\{\epsilon_i|\chi(\lambda_i)\neq \lambda_i/2\}$ with
relations
\begin{enumerate}
\item[(r1)] $\epsilon_i^2=1$,
\item[(r2)] $\epsilon_i=\epsilon_{i+1}\text{ if
}\chi(\lambda_i)+\chi(\lambda_{i+1})>\lambda_{i+1}$,
\item[(r3)] $\epsilon_{2s}=1$ if $\chi(\lambda_{2s})\geq m$.
\end{enumerate}

\subsection{}\label{lem-p-2}  Let $P$ be the stabilizer of a line $\Sigma=\{\tk
v\}\subset V$ in $G$, where $\alpha(v)=0$.
\begin{lem}
$\xi\in\Lp^*$ if and only if $\beta_\xi(v,v')=0$ for any $v'\in V$.
\end{lem}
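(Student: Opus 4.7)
The plan is to mirror the proof of Lemma~\ref{lem-p-1} almost verbatim, adapted to the odd orthogonal setting. First I would fix a basis $v_1 = v, v_2, \ldots, v_{2n}, v_0$ of $V$ with $v_0$ spanning the one-dimensional radical of $\beta$, with $\beta(v_i, v_{n+j}) = \delta_{ij}$ for $i,j \in \{1,\ldots,n\}$ and all other pairings among these vectors equal to zero; such a basis exists since $\alpha(v) = 0$ makes $v$ isotropic and the quadratic form is non-degenerate. Then $\Sigma = \tk v_1$, $\Sigma^\p$ is spanned by $\{v_j : j \neq n+1\}$, and $P$ is the stabilizer of the flag $0 \subset \Sigma \subset \Sigma^\p \subset V$.

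Next I would parametrize a general $x \in \Ln_P$. The filtration conditions give $xv_1 = 0$, $xv_i = a_i v_1$ for $i \in I := \{2,\ldots,n,\, n+2,\ldots,2n,\, 0\}$, and $xv_{n+1} = c_1 v_1 + \sum_{i \in I} c_i v_i$. Imposing $\beta(xu,u) = 0$ for all $u \in V$ (with polarization) yields $c_1 = 0$, $a_0 = 0$, $c_{n+i} = a_i$ and $c_i = a_{n+i}$ for $i = 2,\ldots,n$, while $c_0$ remains free; the condition $\tr(x) = 0$ is then automatic. A direct computation of $\xi(x) = \tr(Xx)$ using the defining relation $\beta_\xi(v_1, v_j) = \beta(Xv_1, v_j) + \beta(v_1, Xv_j)$ and the explicit pairings above then simplifies to
\begin{equation*}
\tr(Xx) = \sum_{i=2}^{n} a_i\, \beta_\xi(v_1, v_{n+i}) + \sum_{i=2}^{n} a_{n+i}\, \beta_\xi(v_1, v_i) + c_0\, \beta_\xi(v_1, v_0).
\end{equation*}

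Since the free parameters are independent, $\xi \in \Lp^*$ is equivalent to $\beta_\xi(v_1, v_j) = 0$ for every $j \in I$. Combined with the automatic vanishing $\beta_\xi(v_1, v_1) = 2\beta(Xv_1, v_1) = 0$ in characteristic 2, this gives $\beta_\xi(v, v') = 0$ for all $v' \in \Sigma^\p$, and the converse implication is immediate from the same formula.

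The step I expect to be the main obstacle is upgrading vanishing on $\Sigma^\p$ to vanishing on all of $V$, i.e., deducing $\beta_\xi(v, v_{n+1}) = 0$, since this coefficient never appears in the formula for $\tr(Xx)$. As in the concluding step of the proof of Lemma~\ref{lem-p-1}, this final vanishing is obtained by invoking nilpotency of $\xi$ (a standing hypothesis of Section~\ref{sec-dor}): the partial vanishing just established forces $\beta_\xi(v, \cdot)$ to be a scalar multiple of $\beta(v, \cdot)$ on $V$, which via the defining relation $\beta(T_\xi w, w') = \beta_\xi(w, w')$ on $W$ (in the normal form $V = V_{2m+1} \oplus W$ from \ref{ssec-or}) translates into the statement that $v$ is a scaled eigenvector of a nilpotent operator; the scalar must therefore vanish, yielding $\beta_\xi(v, v_{n+1}) = 0$ and completing the proof.
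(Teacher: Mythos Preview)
Your approach matches the paper's almost exactly: the choice of basis, the parametrization of $\Ln_P$, and the computation of $\tr(Xx)$ are all the same (the paper uses $v_{2n+1}$ where you use $v_0$, and $b$ where you use $c_0$). You also correctly identify the remaining obstacle and the idea that nilpotency forces an eigenvalue to vanish.

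The one genuine gap is in how you implement that last step. You invoke the operator $T_\xi$ on the normal-form summand $W$ from \ref{ssec-or}, but $T_\xi$ is only defined on that particular $W$, and there is no reason for your $v=v_1$ to lie in it: the line $\Sigma$ is arbitrary, whereas the normal-form decomposition $V=V_{2m+1}\oplus W$ is determined by $\xi$. So the relation $\beta(T_\xi w,w')=\beta_\xi(w,w')$ does not apply to $v$ and your eigenvector argument does not go through as stated. The paper avoids this by taking instead the subspace $W'=\mathrm{span}(v_1,\ldots,v_{2n})$ already built into your basis; $\beta$ is nondegenerate on $W'$, so one may define $T:W'\to W'$ by $\beta(Tw,w')=\beta_\xi(w,w')$, and an argument as in \cite[Lemma~3.11]{X2} shows $T$ is nilpotent. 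Since $v_1\in W'$ by construction and the partial vanishing gives $Tv_1=\beta_\xi(v_1,v_{n+1})v_1$, nilpotency forces $\beta_\xi(v_1,v_{n+1})=0$. Replacing your normal-form $T_\xi$ with this $T$ on $W'$ closes the gap.
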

\begin{proof}
$P$ is the stabilizer of the flag $\{0\subset\{\tk v\}\subset\{\tk
v\}^\p\subset V\}$. Write $v_1=v$. There exists vectors $v_i$,
$i=2,\ldots,2n+1$ such that $v_i$, $i=1,\ldots,2n+1$ span $V$ and
$\beta(v_i,v_j)=\delta_{j,i+n}$, $1\leq i\leq j\leq 2n$,
$\beta(v_i,v_{2n+1})=0$, $i=1,\ldots,2n+1$, $\alpha(v_i)=0$,
$i=1,\ldots,2n$, $\alpha(v_{2n+1})=1$. Let $x\in\Ln_P$. We have
$xv_1=0$, $xv_i=a_iv_1$,$i\neq 1,n+1,2n+1$ and
$xv_{n+1}=\sum_{i=2}^na_{n+i}v_i+\sum_{i=2}^{n}a_iv_{n+i}+bv_{2n+1}$,
$xv_{2n+1}=0$. Assume $\xi(x')=\tr(Xx')$ for any $x'\in\Lg$. A
straightforward calculation shows that  $\tr(Xx)=\sum_{i=2}^n
a_i\beta_\xi(v_1,v_{n+i})+\sum_{i=2}^{n}
a_{n+i}\beta_\xi(v_1,v_{i})+b\beta_\xi(v_1,v_{2n+1})$. Thus if
$\xi\in\Lp^*$ then $\beta_\xi(v_1,v_i)=0,i\neq n+1$.

Now let $W$ be the subspace of $V$ spanned by $v_i,i=1,\ldots,2n$.
Then $\beta$ is nondegenerate on $W$. We define a map
$T:W\rightarrow W$ by $\beta(Tw,w)=\beta_\xi(w,w')$, $w,w'\in W$.
Then similar argument as in \cite[Lemma 3.11]{X2}  shows that $T$ is
nilpotent. One easily shows that
$Tv_1=\sum_{j=1}^n\beta_\xi(v_1,v_{n+j})v_j+\sum_{j=1}^n\beta_\xi(v_1,v_{j})v_{n+j}$.
It follows that $Tv_1=\beta_\xi(v_1,v_{n+1})v_1$ and thus
$\beta_\xi(v_1,v_{n+1})=0$. The lemma follows.
\end{proof}

\subsection{}\label{prop-1-d}
Let $\rc'=(m';\lambda',\chi')\in f_\xi(\cP_\xi)$,
$\mathbf{Y}=f_\xi^{-1}(\rc')$ and $\bX=\rho_\xi^{-1}(\mathbf{Y})$
(see \ref{ssec-gps-d}).

\begin{prop}
We have $\dim \bX=\dim \cB_\xi$ if and only if $(\lambda',\chi')$
and $m'$ satisfy (a) or (b):

(a) Assume $m-1\geq \lambda_{2s}-\chi(\lambda_{2s})$. $m'=m-1$,
$\lambda_i'=\lambda_i$ and $\chi'(\lambda_i')=\chi(\lambda_i)$,
$i=1,\ldots,2s$. We have $\dim \mathbf{Y}=0$;

(b) Assume that $\lambda_{i+1}=\lambda_{i}>\lambda_{i-1}$. $m'=m$,
$\lambda_j'=\lambda_j$, $j\neq i+1,i$,
$\lambda_{i+1}'=\lambda_{i+1}-1, \lambda_i'=\lambda_i-1$,
$\chi'(\lambda_j')=\chi(\lambda_j)$, $j\neq i,i+1$ and
$\chi'(\lambda_i')=\chi'(\lambda_{i+1}')\in\{\chi(\lambda_i),\chi(\lambda_i)-1\}$
satisfies $\lambda_i'/2\leq\chi'(\lambda_i')\leq\lambda_i'$,
$\chi(\lambda_{i-1})\leq\chi'(\lambda_i')\leq\chi(\lambda_{i-1})+\lambda_i-\lambda_{i-1}-1$.
We have $\dim \mathbf{Y}=2s-i+1$ if
$\chi'(\lambda_i')=\chi(\lambda_i)$ and $\dim \mathbf{Y}=2s-i$ if
$\chi'(\lambda_i')=\chi(\lambda_i)-1$.
\end{prop}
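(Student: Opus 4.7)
The plan is to mimic the proof of Proposition \ref{prop-1} for orthogonal Lie algebras, transposed to the dual setting via Lemma \ref{lem-p-2}. That lemma identifies $\cP_\xi$ with $\bP(\{v\in V\mid\alpha(v)=0,\ \beta_\xi(v,V)=0\})$, so I would first fix a line $\Sigma=\tk v$ in this projective space and, using the normal form $V=V_{2m+1}\oplus W$ of $\xi$ from \ref{ssec-or}, determine $\xi'\in\Ll^*$ by observing that $\beta_\xi$ descends to a symplectic-type form on $\Sigma^\p/\Sigma$ which still admits a decomposition of the shape $V'_{2m'+1}\oplus W'$. Extracting $(m';\lambda',\chi')$ from this decomposition then relates $\rc'$ directly to $(m;\lambda,\chi)$.

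Next I would reduce the dimension equation $\dim\bX=\dim\cB_\xi$ to a combinatorial inequality. Using $\dim\bX\leq\dim\bY+\dim\cB_{\xi'}+\dim U_P$ (Proposition \ref{prop-dim}) together with the known formula for $\dim\cB_\xi$ in terms of $(m;\lambda,\chi)$ from \cite{X2}, equality forces $\dim\bY$ to attain its upper bound and determines which pairs $(\rc,\rc')$ are admissible. I would then enumerate the candidates for $v\in\ker T_\xi$ (together with its $V_{2m+1}$-companions) stratified by which block of the normal form they come from. Two mutually exclusive scenarios arise:

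Case \textbf{(b)} is the analogue of Proposition \ref{prop-1}(b): $v$ lies (up to admissible perturbation by $W$) at the top of a pair of equal Jordan blocks $\lambda_{i+1}=\lambda_i>\lambda_{i-1}$ in $W$. The quotient $\Sigma^\p/\Sigma$ then has these two blocks each shortened by one, and the contribution $\alpha_\xi(T_\xi^{l-1}v)$ being zero or nonzero is what bifurcates $\chi'(\lambda_i')$ between $\chi(\lambda_i)$ and $\chi(\lambda_i)-1$; the dimension counts $2s-i+1$ or $2s-i$ are read off as in the orthogonal case. Case \textbf{(a)} is genuinely new: take $v$ to be the top vector in $V_{2m+1}$ (so $v\in V_{2m+1}\cap\ker T_\xi$ but not in any $W$-block). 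Then $\Sigma^\p/\Sigma\cong V_{2m-1}\oplus W$ as form modules, the Jordan data of $T_\xi$ on $W$ is unchanged, and $\bY$ consists of this single vector up to scalar, giving $\dim\bY=0$.

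The main obstacle will be case \textbf{(a)}: one must verify that $V_{2m-1}\oplus W$ is still a normal form of $\xi'$, which by the condition in \ref{ssec-or} demands $m-1\geq\lambda_{2s}-\chi(\lambda_{2s})$. If this fails, then one would have to enlarge the odd-dimensional summand by absorbing part of $W$, and the resulting Jordan/$\chi$ data for $\xi'$ would no longer match $(\lambda,\chi)$; correspondingly $\cB_{\xi'}$ shrinks and the equality $\dim\bX=\dim\cB_\xi$ cannot hold. Checking this norm-form condition rigorously — and ruling out further exotic choices of $v$ that mix $V_{2m+1}$ and $W$ nontrivially (using the freedom $\tilde u_0=u_0+w_0$ described in \ref{ssec-or} to absorb all such mixtures into one of the two canonical cases) — is the delicate computational step. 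The transitivity of $Z_G(\xi)$ on $\bY$, needed to interpret $\dim\bY$ correctly, will follow as in Proposition \ref{prop-gps} once the form modules $X(\Sigma)$ are described analogously to \ref{ssec-3}.
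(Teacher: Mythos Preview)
Your proposal is correct and follows essentially the same route as the paper: the paper identifies $\cP_\xi$ with $\bP((\tk v_0\oplus\ker t)\cap\alpha^{-1}(0))$ via Lemma \ref{lem-p-2}, handles case (a) by taking $\Sigma=\tk v_0$ (giving $\bY$ a single point and $\Sigma^\p/\Sigma\cong V_{2m-1}\oplus W$, valid precisely when $m-1\geq\lambda_{2s}-\chi(\lambda_{2s})$), and in case (b) uses exactly the change-of-normal-form trick $\tilde u_0=u_0+aw_0$ you describe to move a mixed vector $v=av_0+t^{\lambda_j-1}w$ into $\tilde W$, after which the analysis of $(\Sigma^\p\cap W)/\Sigma$ is literally imported from the orthogonal Lie algebra case in \S\ref{sec-or}. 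One small slip: the bifurcation between $\chi'(\lambda_i')=\chi(\lambda_i)$ and $\chi(\lambda_i)-1$ is governed by the condition $\alpha(t^{l_j-1}w)=a^2\delta_{m,\lambda_j-l_j}$ (the original quadratic form $\alpha$, not $\alpha_\xi$, which is a symplectic-dual object).
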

\subsection{}\label{prop-gps-d2} From now on let $\rc'$ be as in Proposition
\ref{prop-1-d}. Let $A_P$ and $A_P'$ be defined as in
\ref{ssec-gps-d}.
\begin{prop}
The group $Z_G(\xi)$ acts transitively on $\mathbf{Y}$. The group
$A_P$ is the subgroup of $A(\xi)$ generated by the elements
$\epsilon_i$ which appear both in the generators of $A(\xi)$ and of
$A'(\xi')$. The group $A_P'$ is the smallest subgroup of $A'(\xi')$
such that the map $A_P\rightarrow A'(\xi')/A_P'$ given by
$\epsilon_i\mapsto \epsilon_i'$ is a morphism.
\end{prop}

\begin{coro}
The variety $\mathbf{Y}$ has two irreducible components (and
$|A(\xi)/A_P|=2$) if $\rc'$ is as in Proposition \ref{prop-1-d} (b)
with $\chi(\lambda_i)=\frac{\lambda_i+1}{2}$,
$\chi'(\lambda_i')=\chi(\lambda_i)-1$, and
$\lambda_{i+2}-\chi(\lambda_{i+2})\geq(\lambda_{i}+1)/2$ if
$i<2s-1$, $m\geq(\lambda_{i}+1)/2$ if $i=2s-1$.

In this case, suppose
$D=\{1,\epsilon_i\}=\{1,\epsilon_{i+1}\}\subset {A}(\xi)$, then
${A}(\xi)=D\times {A}_P$. In the other cases, $\mathbf{Y}$ is
irreducible and ${A}_P={A}(\xi)$.
\end{coro}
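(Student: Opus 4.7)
The plan is to deduce the corollary directly from Proposition \ref{prop-gps-d2} by a careful combinatorial comparison of the presentations of $A(\xi)$ and $A'(\xi')$. Since Proposition \ref{prop-gps-d2} asserts that $\mathbf{Y}$ is a single $Z_G(\xi)$-orbit, the number of its irreducible components equals $[A(\xi):A_P]$, so the task reduces to computing this index. Moreover, $A_P$ is generated precisely by those $a_j$ in the presentation of $A(\xi)$ whose image $a_j'$ also appears in the given presentation of $A'(\xi')$, i.e.\ those $j$ satisfying both $\chi(\lambda_j) \neq \lambda_j/2$ and $\chi'(\lambda_j') \neq \lambda_j'/2$.

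In case (a) of Proposition \ref{prop-1-d} we have $\lambda'=\lambda$ and $\chi'=\chi$, so every generator $a_j$ of $A(\xi)$ immediately supplies a generator $a_j'$ of $A'(\xi')$; hence $A_P=A(\xi)$ and $\mathbf{Y}$ is irreducible. In case (b), only the indices $i,i+1$ can drop out of the common generating set. Since $\chi'(\lambda_i') \in \{\chi(\lambda_i), \chi(\lambda_i)-1\}$ and since $a_i$ being a generator of $A(\xi)$ requires $\chi(\lambda_i) > \lambda_i/2$, the equality $\chi'(\lambda_i') = \lambda_i'/2 = (\lambda_i-1)/2$ forces $\chi'(\lambda_i') = \chi(\lambda_i)-1$ and $\chi(\lambda_i) = (\lambda_i+1)/2$ (in particular $\lambda_i$ is odd). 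In any other subcase $a_i'$ is a generator of $A'(\xi')$ and so $a_i,a_{i+1}\in A_P$, giving $A_P=A(\xi)$.

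In the remaining special subcase, relation (r2) applied to $j=i$ yields $a_i=a_{i+1}$ in $A(\xi)$ because $\chi(\lambda_i)+\chi(\lambda_{i+1}) = \lambda_i+1 > \lambda_{i+1}$. Set $D=\{1,a_i\}$. The claim $A(\xi)=D\times A_P$ reduces to showing that $a_i$ is not forced into $A_P$ by a further chain of relations. Upward propagation via (r2) with $j=i+1$, namely $a_{i+1}=a_{i+2}$, is avoided exactly when $\chi(\lambda_{i+1})+\chi(\lambda_{i+2})\leq \lambda_{i+2}$, which is the hypothesis $\lambda_{i+2}-\chi(\lambda_{i+2})\geq (\lambda_i+1)/2$ for $i<2s-1$. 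When $i=2s-1$, the analogous obstruction is relation (r3), which collapses $a_{2s}$ to $1$ when $\chi(\lambda_{2s})\geq m$; the hypothesis on $m$ blocks this. Crucially, downward propagation $a_i = a_{i-1}$ via (r2) for $j=i-1$ is already ruled out by the bound $\chi(\lambda_{i-1}) \leq \chi'(\lambda_i')=(\lambda_i-1)/2$ built into Proposition \ref{prop-1-d} (b). Together these show $a_i\notin A_P$, whence $[A(\xi):A_P]=2$ with the stated direct product structure.

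For the converse, in each configuration not covered by the listed hypotheses I exhibit $a_i\in A_P$ directly: if $\chi(\lambda_i)\neq(\lambda_i+1)/2$ or $\chi'(\lambda_i')=\chi(\lambda_i)$ then $a_i'$ itself is a generator of $A'(\xi')$; otherwise the failure of the blocking condition produces, via (r2) or (r3), an identification of $a_{i+1}$ with some $a_{j}\in A_P$ or with $1$, so $a_i=a_{i+1}\in A_P$ and $\mathbf{Y}$ is irreducible. The main obstacle throughout is the bookkeeping of all possible identifications coming from relations (r1)--(r3), in particular the handling of the boundary case $i=2s-1$ where (r3) plays the role (r2) played before; no new geometric input beyond Proposition \ref{prop-gps-d2} is required, the proof being a purely group-theoretic unwinding of the abelian presentation.
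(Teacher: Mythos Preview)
Your approach is exactly the intended one: the paper states this corollary without proof, leaving it as an immediate consequence of Proposition \ref{prop-gps-d2} together with the explicit presentations of $A(\xi)$ and $A'(\xi')$ from \ref{ssec-dor}, and your argument carries out precisely this unwinding. The case analysis you give (case (a) trivial; in case (b) only the indices $i,i+1$ can drop out, forcing $\chi(\lambda_i)=(\lambda_i+1)/2$ and $\chi'(\lambda_i')=\chi(\lambda_i)-1$; then checking upward/downward propagation of $a_i$ through (r2) and (r3)) is correct and complete in spirit.

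One small point of care: at the boundary value $m=(\lambda_i+1)/2$ when $i=2s-1$, we have $\chi(\lambda_{2s})=(\lambda_i+1)/2=m$, so relation (r3) of \ref{ssec-dor} (which fires when $\chi(\lambda_{2s})\geq m$) still applies and gives $a_{2s}=1$, hence $a_i=a_{i+1}=1\in A_P$. Thus your sentence ``the hypothesis on $m$ blocks this'' is not literally correct at equality. This is more plausibly a slight imprecision in the inequality recorded in the corollary than a defect in your method; the argument you give is the right one and shows that the sharp condition is $m>(\lambda_i+1)/2$.
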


\begin{coro}
 The group $A_P'$ is trivial,
except if  $\rc'$ is as in Proposition \ref{prop-1-d} (b) with
$\chi(\lambda_i)\neq\frac{\lambda_i+1}{2}$,
$\chi'(\lambda_i')=\chi(\lambda_i)-1$, and
$\chi(\lambda_{i+2})+\chi(\lambda_{i})=\lambda_{i+2}+1$ if $i<2s-1$,
$\chi(\lambda_{i})=m+1$ if $i=2s-1$. We have
${A}_P'=\{1,\epsilon_{i+1}'\epsilon_{i+2}'\}\subset {A}'(\xi')$ if
$i<2s-1$ and ${A}_P'=\{1,\epsilon_{2s}'\}\subset {A}'(\xi')$ if
$i=2s-1$.
\end{coro}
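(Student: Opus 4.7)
The plan is to deduce the corollary from Proposition \ref{prop-gps-d2}, which characterizes $A_P'$ as the smallest subgroup of $A'(\xi')$ through which the assignment $a_i \mapsto a_i'$ factors to give a well-defined morphism $A_P \to A'(\xi')/A_P'$. Since both $A(\xi)$ and $A'(\xi')$ are elementary abelian $2$-groups presented by generators and relations of type (r1)--(r3) in \ref{ssec-dor}, the subgroup $A_P'$ will be generated by exactly those products $a_j' a_k'$ (or singletons $a_j'$) corresponding to relations among the $\{a_i\}$ in $A(\xi)$ that fail to lift automatically to relations among the $\{a_i'\}$ in $A'(\xi')$. The proof is then a case-by-case comparison of the (r2) and (r3) relations before and after passing to $\rc'$.

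First I would dispose of case (a) of Proposition \ref{prop-1-d}, where only $m$ drops by one while all $\lambda_j$ and $\chi(\lambda_j)$ remain fixed: the (r2) relations in $A(\xi)$ and $A'(\xi')$ coincide, and (r3) becomes weaker since $\chi(\lambda_{2s}) \geq m$ forces $\chi'(\lambda_{2s}') \geq m' = m-1$. Hence every $A(\xi)$-relation holds in $A'(\xi')$ and $A_P' = 1$. Next I would dispose of case (b) with $\chi'(\lambda_i') = \chi(\lambda_i)$: here only $\lambda_i$ and $\lambda_{i+1}$ decrease by $1$, and inspection of (r2) at each adjacent pair, together with (r3), shows again that every $A(\xi)$-relation continues to hold in $A'(\xi')$, so $A_P' = 1$.

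The main work is case (b) with $\chi'(\lambda_i') = \chi(\lambda_i) - 1$, where both $\lambda$ and $\chi$ at indices $i, i+1$ decrease by $1$. I would examine (r2) at the pairs $(i-1,i)$, $(i,i+1)$, $(i+1,i+2)$ and (r3) in turn. The pairs $(i-1,i)$ and $(i,i+1)$ yield no new constraint: the shifts $\lambda \to \lambda - 1$ and $\chi \to \chi - 1$ cancel out in the inequality defining (r2), and the hypothesis $\chi(\lambda_i) \neq (\lambda_i+1)/2$ is exactly what guarantees the relevant generators persist in $A'(\xi')$. The pair $(i+1,i+2)$ is the first source of discrepancy: the $A(\xi)$-inequality $\chi(\lambda_{i+1}) + \chi(\lambda_{i+2}) > \lambda_{i+2}$ becomes $\chi(\lambda_{i+1}) + \chi(\lambda_{i+2}) - 1 > \lambda_{i+2}$ in $A'(\xi')$, so the relation $a_{i+1} = a_{i+2}$ may hold in $A(\xi)$ but fail in $A'(\xi')$, and this happens precisely when $\chi(\lambda_{i+2}) + \chi(\lambda_i) = \lambda_{i+2} + 1$; minimality then forces $A_P' = \{1, a_{i+1}' a_{i+2}'\}$. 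For the boundary index $i = 2s-1$, there is no pair $(i+1,i+2)$ and the analogous failure occurs instead in (r3): the relation $a_{2s} = 1$ in $A(\xi)$ is governed by $\chi(\lambda_{2s}) \geq m$, whereas the corresponding relation $a_{2s}' = 1$ in $A'(\xi')$ requires $\chi(\lambda_{2s}) - 1 \geq m' = m$, and comparing yields the stated condition that forces $A_P' = \{1, a_{2s}'\}$.

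The main obstacle, exactly as in the orthogonal Lie algebra case treated in Section \ref{sec-or}, is the careful bookkeeping of which $a_j$ and $a_j'$ actually appear as generators in $A(\xi)$ and $A'(\xi')$ (determined by $\chi(\lambda_j) \neq \lambda_j/2$ and $\chi'(\lambda_j') \neq \lambda_j'/2$ respectively), since this status can change under the perturbation and may render a would-be relation or would-be nontrivial element of $A_P'$ vacuous. The exclusion $\chi(\lambda_i) \neq (\lambda_i+1)/2$ is precisely what rules out the degenerate bookkeeping where $a_i'$ ceases to be a generator, so one must cross-check in each subcase that the candidate generators of $A_P'$ produced by the relation comparison are genuinely nontrivial in $A'(\xi')$.
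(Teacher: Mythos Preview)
Your approach is correct and is exactly how the corollary is meant to follow from Proposition~\ref{prop-gps-d2}; the paper gives no separate proof, and the deduction is precisely the relation-by-relation comparison of the (r1)--(r3) presentations of $A(\xi)$ and $A'(\xi')$ that you describe. One remark: at $i=2s-1$ your own recipe actually yields $\chi(\lambda_i)=m$ (from $\chi(\lambda_{2s})\geq m$ holding in $A(\xi)$ while $\chi(\lambda_{2s})-1<m$ fails (r3) in $A'(\xi')$), not the printed $m+1$, so the phrase ``yields the stated condition'' glosses over what appears to be a misprint in the statement rather than any flaw in your argument.
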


\subsection{}\label{ssec-uu}
Write $\xi=V_{2m+1}\oplus W$, where
$W=W_{l_1}(\lambda_1)\oplus\cdots\oplus W_{l_s}(\lambda_s)$,
$l_i=\chi(\lambda_i)$, $\lambda_i\geq\lambda_{i+1}$ (notation as in
\cite{X2}). Let $v_i$, $i=0,\ldots,m$ be as in \ref{ssec-or}. We
view $W$ as a $\tk[t]$ module by $\sum a_it^iw=\sum a_iT_\xi^iw$. It
follows from Lemma \ref{lem-p-2} that $\cP_\xi$ is identified with
$\mathbb{P}((\tk v_0\oplus\ker t)\cap\alpha^{-1}(0))$. Let
$\Sigma\in\bY$ and $\Sigma^\p=\{v'\in V|\beta(v',\Sigma)=0\}$. The
bilinear form $\beta_\xi$ induces a bilinear form
$\bar{\beta}_{\xi}$ on $\Sigma^\p/\Sigma$. Then $\bar{\beta}_\xi$
defines an element $\xi'\in\Lo(\Sigma^\p/\Sigma)^*\cong\Ll^*$. We
have that $\xi'\in\rc'$ and $\beta_{\xi'}=\bar{\beta}_{\xi}$. The
variety $\mathbf{Y}$ in various cases is described in the following.

(i) Assume $m\geq 1$ and $m-1\geq\lambda_1-l_1$.
\begin{eqnarray*}&&\xi'=V_{2m-1}\oplus
W_{l_1}(\lambda_1)\oplus\cdots\oplus W_{l_s}(\lambda_s),
\\
&&\mathbf{Y}=\{\tk v_0\} \text{ consists of one point}.
\end{eqnarray*}

(ii) Assume $\lambda_j-l_j-1\geq
\lambda_{j+1}-l_{j+1},\lambda_j\geq\lambda_{j+1}+1$. Then $m\geq 1$.
\begin{eqnarray*}
&&\xi'=V_{2m+1}\oplus W_{l_1}(\lambda_1)\oplus\cdots\oplus
W_{l_j}(\lambda_j-1)\oplus\cdots\oplus W_{l_s}(\lambda_s).
\\
&&\mathbf{Y}=\{\tk v|v=av_0+t^{\lambda_j-1}w,w\in
W,t^{\lambda_j}w=0,w\notin
tW,\alpha(t^{l_j-1}w)\neq a^2\delta_{m,\lambda_j-l_j}\}.\\
&&\dim \mathbf{Y}=2j.
\end{eqnarray*}

(iii) Assume $l_j-1\geq
l_{j+1},l_j\geq[\lambda_j/2]+1,\lambda_j\geq\lambda_{j+1}+1$.
\begin{eqnarray*}
&&\xi'=V_{2m+1}\oplus W_{l_1}(\lambda_1)\oplus\cdots\oplus
W_{l_j-1}(\lambda_j-1)\oplus\cdots\oplus W_{l_s}(\lambda_s),\\
&&{\small \mathbf{Y}\subset \mathbf{Y}':=\{\tk
v|v=av_0+t^{\lambda_j-1}w,w\in W,t^{\lambda_j}w=0,w\notin
tW,\alpha(t^{l_j-1}w)=a^2\delta_{m,\lambda_j-l_j}\},\text{ see \ref{ssec-y}}} \\
&&\dim \mathbf{Y}=2j-1.
\end{eqnarray*}

\subsection{}\label{ssec-y}
Case (i) is clear. We explain case (iii) in detail. Case (ii) is
similar.

Let $\Sigma=\tk v\in \mathbf{Y}$, where $v=av_0+t^{\lambda_j-1}w$.
Let $u_i\in V_{2m+1}$, $i=0,\ldots,m-1$ be as in \ref{ssec-or}.
Assume $a\neq 0$.
 There exists $w_0\in W$
such that $\beta(w_0,t^{\lambda_j-1}w)=a$ and (if $m\geq 1$) $\alpha(w_0)=0$. If $m=0$, let $\tilde{W}=\{w_1+\beta(w_1,w_0)v_0|w_1\in  W\}$; if $m\geq 1$, let
$\tilde{u}_0=w_0+u_0$ and we define $\tilde{V}_{2m+1}$, $\tilde{W}$
as in \ref{ssec-or}. Then $V=\tilde{V}_{2m+1}\oplus \tilde{W}$
($\chi_{\tilde{W}}(\lambda_i)=l_i$ with a careful choice of $w_0$) and $\Sigma\subset \tilde{W}$.
 Note that $v=t^{\lambda_j-1}(\tilde{w})$,
where $\tilde{w}=w+\sum_{i=0}^m\beta(w_0,t^iw)v_i\in\tilde{W}$ and
$\alpha(t^{l_j-1}\tilde{w})=\alpha(t^{l_j-1}w)+a^2\delta_{m,\lambda_j-l_j}$.

Now we can assume $V=V_{2m+1}\oplus W$ is a normal form of $\xi$,
with $\Sigma=\tk v\subset W$ and $v=t^{\lambda_j-1}w,w\in W$. Then
$\Sigma^\p/\Sigma=V_{2m+1}\oplus(\Sigma^\p\cap W)/\Sigma$. We apply
the results for orthogonal Lie algebras to $(\Sigma^\p\cap
W)/\Sigma$ (see \ref{ssec-yc}). Write $W'=(\Sigma^\p\cap W)/\Sigma$.
The set $\mathbf{Y}=\mathbf{Y}'$, except if
$l_{j-1}=l_j>\frac{\lambda_{j-1}+1}{2}$, $m>\lambda_{j-1}-l_{j-1}$
and for all $\lambda_a>\lambda_{j-1}$,
$\lambda_a-l_a>\lambda_j-l_j$, then $\mathbf{Y}$ consists of those
$v$ such that $\chi_{W'}(\lambda_{j-1})=l_{j-1}$.

\subsection{}
We prove Proposition \ref{prop-gps-d2}. In case (i), we have
$L=\{\tk v_0\}\subset V_{2m+1}$. For any $g\in Z_G(\xi)$, we have
that $gv_0=v_0$. Hence $H=Z_P(\xi)=Z_G(\xi)$, $K=Z_P(\xi)\cap
Z_G^0(\xi)=Z_G^0(\xi)$ and $A_P=A(\xi),A_P'=1$.

In cases (ii) and (iii), we can find a normal form $V=V_{2m+1}\oplus
W$ such that $\Sigma\subset W$ (see \ref{ssec-y}). Let $X(\Sigma)$
be the set of all such $W$. We first show that $Z_G(\xi)$ acts
transitively on $\mathbf{Y}$. Let
$\widetilde{\mathbf{Y}}=\{(\Sigma,W)|\Sigma\in \mathbf{Y},W\in
X(\Sigma)\}$. Then $Z_G(\xi)$ acts transitively on
$pr_2(\widetilde{\mathbf{Y}})$.  Set
$\widetilde{\mathbf{Y}}_W=pr_2^{-1}(W)=\{\Sigma\in \mathbf{Y}|W\in
X(\Sigma)\}$. It follows from the results in the orthogonal Lie
algebra case that $Z_W$ acts transitively on
$\widetilde{\mathbf{Y}}_W$ (see Proposition \ref{prop-gps}). Then
$Z_G(\xi)$ acts transitively on $\widetilde{\mathbf{Y}}$ and hence
acts transitively on $\mathbf{Y}=pr_1(\widetilde{\mathbf{Y}})$.

Fix $\Sigma\in \mathbf{Y}$ and $W\in X(\Sigma)$. Let $Z_W$ and
$Z_\Sigma$ be the stabilizer of $W$ and $\Sigma$ in $Z_G(\xi)$
respectively. The morphism $A_P\rightarrow A'(\xi')/A'_P$ is induced
by the natural morphism $Z_\Sigma/Z_\Sigma^0\rightarrow A'(\xi')$.
Since $X(\Sigma)$ is irreducible, $Z_{\Sigma,W}=Z_\Sigma\cap Z_W$
meets all the irreducible components of $Z_\Sigma$. Thus to study
the morphism $A_P\rightarrow A'(\xi')/A'_P$, it suffices to study
the natural morphism $Z_{\Sigma,W}/Z_{\Sigma,W}^0\rightarrow
A'(\xi')$.

The quadratic form $\alpha$ on $V$ restricts to nondegenerate
quadratic forms on $W$ and $W^\p$. Let $G(W)$, $G(W^\p)$ be the
groups preserving the respective quadratic forms and $\Lg(W)$,
$\Lg(W^\p)$ the Lie algebras. The bilinear form $\beta_\xi$ on $V$
restricts to bilinear forms on $W$ and $W^\p$. Let $\xi_W$ and
$\xi_{W^\p}$ be the corresponding elements in $\Lg(W)^*$ and
$\Lg(W^\p)^*$ respectively. Moreover, the bilinear form $\beta_\xi$
induces a bilinear form on $\tilde{W}=(\Sigma^\p\cap W)/\Sigma$. Let
$\xi_{\tilde{W}}$ be the corresponding element in $\Lg(\tilde{W})^*$
and
$A'(\xi_{\tilde{W}})=Z_{G(\tilde{W})}(\xi_{\tilde{W}})/Z_{G(\tilde{W})}^0(\xi_{\tilde{W}})$.

Let $Z=\{z\in Z_{G(W)}(\xi_W)|z\Sigma=\Sigma\}$. Since $Z_W\cong
Z_{G(W)}(\xi_W)\times Z_{G(W^\p)}(\xi_{W^\p})$, we have natural
isomorphisms $Z_{\Sigma,W}\cong Z\times Z_{G(W^\p)}({\xi_{W^\p}})$
and $Z_{\Sigma,W}/Z_{\Sigma,W}^0\cong Z/Z^0\times A(\xi_{W^\p})$.
Note that $A(\xi_{W^\p})=\{1\}$. On the other hand, we have a
morphism $A'(\xi_{\tilde{W}})\times A(\xi_{W^\p})\rightarrow
A'(\xi')$ which comes from the isomorphism $\Sigma^\p/\Sigma\cong
\tilde{W}\oplus W^\p$ and it is given by the system of generators.
It follows form the results for orthogonal Lie algebras that the
morphism $Z/Z^0\rightarrow A'(\xi_{\tilde{W}})$ is given by
generators (see Proposition \ref{prop-gps}). We then deduce easily
that $A_P'$ and the morphism $A_P\rightarrow A'(\xi')/A'_P$ are as
in Proposition \ref{prop-gps-d2}.

\section{Some combinatorics}\label{sec-com}
In this section we recall some combinatorics from \cite{Lu1,LS}. The
combinatorics goes back to \cite{Lu3}, where it is used to
parametrize unipotent representations of classical groups. We will
use the same kind of combinatorial objects to describe the Springer
correspondence for classical Lie algebras and their duals in
characteristic 2.
\subsection{}
Let $r,s,n\in\mathbb{N}=\{0,1,2,\ldots\}$, $d\in\mathbb{Z}$,
$e=[\frac{d}{2}]\in\mathbb{Z}$ ($[-]$ means the integer part). Let
$\tilde{X}_{n,d}^{r,s}$ be the set of all ordered pairs $(A,B)$ of
finite sequences of natural integers $A=(a_1,a_2,\ldots,a_{m+d})$
and $B=(b_1,b_2,\ldots,b_m)$ (for some $m$) satisfying the following
conditions:
\begin{eqnarray*}
&&a_{i+1}-a_i\geq r+s,\ i=1,\ldots,m+d-1\\
&&b_{i+1}-b_i\geq r+s,\ i=1,\ldots,m-1\\
&&b_1\geq s\\
&&\sum a_i+\sum b_i=n+r(m+e)(m+d-e-1)+s(m+e)(m+d-e).
\end{eqnarray*}
The set $\tilde{X}_{n,d}^{r,s}$ is equipped with a shift
$\sigma_{r,s}$. If $(A,B)$ is as above, then
$\sigma_{r,s}(A,B)=(A',B')$,
\begin{eqnarray*}A'=(0,a_1+r+s,\ldots,a_{m+d}+r+s),\
B'=(s,b_1+r+s,\ldots,b_{m}+r+s).\end{eqnarray*}

Let $X_{n,d}^{r,s}$ be the quotient of $\tilde{X}_{n,d}^{r,s}$ by
the equivalence relation generated by the shift and
$$X_n^{r,s}=\bigcup_{d\text{ odd}}X_{n,d}^{r,s}.$$
The equivalence class of $(A,B)$ is still denoted by $(A,B)$.

Assume $s=0$. Then there is an obvious bijection $X_{n,d}^{r,0}\to
X_{n,-d}^{r,0},(A,B)\mapsto(B,A)$. This induces an involution on
each of the following sets \beq X_{n,\text{even}}^r=\bigcup_{d\text{
even}}X_{n,d}^{r,0},\ X_{n,\text{odd}}^r=\bigcup_{d\text{
odd}}X_{n,d}^{r,0}.\eeq

Let $Y_{n,\text{even}}^r$ (resp. $Y_{n,\text{odd}}^r$) be the
quotient of $X_{n,\text{even}}^r$ (resp. $X_{n,\text{odd}}^r$) by
this involution. For $d\geq 0$, the image of $X_{n,\pm d}^{r,0}$ in
$Y_{n,\text{even}}^r$ or $Y_{n,\text{odd}}^r$ is denoted $Y_{n,d}^r$
and the image of $(A,B)$ is denoted $\{A,B\}$.

\subsection{}
When we consider simultaneously two elements $(A,B)\in
X_{n,d}^{r,s}$ and $(A',B')\in X_{n',d'}^{r',s'}$ with $d-d'$ even,
with $A=(a_1,\ldots,a_{m+d}),B=(b_1,\ldots,b_m)$ and
$A'=(a_1',\ldots,a'_{m'+d'}),B'=(b_1',\ldots,b'_{m'})$, we always
assume that we have chosen representatives such that $2m+d=2m'+d'$.
We use the same convention for $\{A,B\}\in Y_{n,d}^r$ and
$\{A',B'\}\in Y_{n',d'}^{r'}$ with $d,d'\geq0$ and $d-d'$ even.

There is an obvious addition \beq X_{n,d}^{r,s}\times
X_{n',d}^{r',s'}\rightarrow X_{n+n',d}^{r+r',s+s'},
(A,B)+(A',B')=(A'',B''),a_i''=a_i+a_i', b_i''=b_i+b_i'.\eeq The same
formula defines $Y_{n,d}^r\times Y_{n',d}^{r'}\rightarrow
Y_{n+n',d}^{r+r'}.$

Let $\Lambda_{0,1}^{r,s}\in X_{0,1}^{r,s}$ (resp.
$\Lambda_{0,0}^{r,s}\in X_{0,0}^{r,s}$) be the element represented
by $(A,B)=(0,\emptyset)$ (resp. $(A,B)=(\emptyset,\emptyset)$). If
$s=0$, let $\Lambda_{0,1}^r\in Y_{0,1}^r$ (resp. $\Lambda_{0,0}^r\in
Y_{0,0}^r$) be the image of $\Lambda_{0,1}^{r,0}$ (resp.
$\Lambda_{0,0}^r$). We have the following bijective maps:
\begin{eqnarray*}
&&X_{n,1}^{0,0}\rightarrow X_{n,1}^{r,s},\
\Lambda\mapsto\Lambda+\Lambda_{0,1}^{r,s},\quad
Y_{n,d}^{0}\rightarrow Y_{n,d}^{r},\
\Lambda\mapsto\Lambda+\Lambda_{0,d}^{r},d=0,1.
\end{eqnarray*}

Since $Y_{n,d}^0,d\geq1$, and $X_{n,d}^{0,0}$ are obviously in
bijection with the set of all pairs of partitions $(\mu,\nu)$ such
that $\sum\mu_i+\sum\nu_i=n$ and thus with $\mathbf{W}_n^\wedge$,
$Y_{n,0}^0$ is in bijection with the set of all unordered pairs of
partitions $\{\mu,\nu\}$ such that $\sum\mu_i+\sum\nu_i=n$ and thus
with ${\mathbf{W}_n^\wedge}'$, we get bijections \begin{eqnarray*}
&&\mathbf{W}_{n}^\wedge\xrightarrow{\sim} X_{n,1}^{r,s},\
\mathbf{W}_{n}^\wedge\xrightarrow{\sim} Y_{n,1}^{r},\
{{\mathbf{W}_{n}^{\wedge}}}'\xrightarrow{\sim} Y_{n,0}^{r}.
\end{eqnarray*}

\subsection{}
An element $(A,B)\in X_{n,d}^{r,s}$ is called distinguished if
$d=0$, $a_1\leq b_1\leq a_2\leq\cdots\leq a_m\leq b_m$ or if $d=1$,
$a_1\leq b_1\leq a_2\leq\cdots\leq a_m\leq b_m\leq a_{m+1}$. An
element $\{A,B\}\in Y_{n,d}^{r}$ $(d\geq 0)$ is called distinguished
if $(A,B)$ or $(B,A)$ is distinguished. Let
$D_n^{r,s},D_{n,\text{even}}^r,D_{n,\text{odd}}^r,
D_{n,d}^{r,s},D_{n,d}^r$ be the set of all distinguished elements in
$X_{n}^{r,s}$, $Y_{n,\text{even}}^r,Y_{n,\text{odd}}^r$,
$X_{n,d}^{r,s}$, $Y_{n,d}^r$ respectively.

Assume $r\geq 1$. For $(A,B)\in\tilde{X}_{n,d}^{r,s}$, we regard
$A,B$ as subsets of $\mathbb{N}$. Two elements $(A,B),(C,D)\in
X_n^{r,s}$ are said to be similar if $A\cup B=C\cup D$ and $A\cap
B=C\cap D$. We define similarity in $Y_{n,\text{even}}^r$ and
$Y_{n,\text{odd}}^r$ in the same way.

Let $S=(A\cup B)\backslash (A\cap B)$. A nonempty subset $I$ of $S$
is called an interval of $(A,B)$ or $\{A,B\}$ if it satisfies the
following conditions:
\begin{enumerate}
\item[(i)] if $i<j$ are consecutive elements of $I$, then $j-i<r+s$;
\item[(ii)] if $i\in I$, $j\in S$ and $|i-j|<r+s$, then $j\in I$.
\end{enumerate}
We call $I$ an initial interval if there exists $i\in I$
 such that $i<s$ and a proper interval otherwise.

Let $\mathcal{S}\subset X_n^{r,s}$ (resp. $Y_{n,\text{odd}}^{r}$ or
$Y_{n,\text{even}}^{r}$) be a similarity class and $(A,B)$ (resp.
$\{A,B\}$)$\in\mathcal{S}$. Let $E$ be the set of all proper
intervals of $(A,B)$ (resp. $\{A,B\}$). The set $\mathcal{A}(E)$ of
all subsets of $E$ is a vector space over $\tF_2$. If
$\mathcal{S}\subset X_n^{r,s}$, it acts simply transitively on
$\mathcal{S}$ as follows. The image of $(A,B)$ under $F\subset E$ is
the pair $(C,D)$ such that $$A\cap I=D\cap I,B\cap I=C\cap I\text{
if and only if }I\in F.$$ If $\mathcal{S}\subset
Y_{n,\text{odd}}^{r}$ (or $Y_{n,\text{even}}^{r}$), as $E$
transforms $(A,B)$ to $(B,A)$, the same formula defines a simply
transitive action of $\mathcal{A}(E)/\{\emptyset,E\}$ on
$\mathcal{S}$.

For $\Lambda\in X_n^{r,s}$ (resp. $Y_{n,\text{odd}}^{r}$ or
$Y_{n,\text{even}}^{r}$), let $V_{\Lambda}^{r,s}$ (resp.
$V_\Lambda^r$) denote the vector space $\mathcal{A}(E)$ (resp.
$\mathcal{A}(E)/\{\emptyset,E\}$), where $E$ is the set of all
proper intervals of $\Lambda$. For $F\in V_{\Lambda}^{r,s}$ (resp.
$V_\Lambda^r$), let $\Lambda_F$ be the image of $\Lambda$ under the
action of $F$.

\subsection{Examples}
(1) $X_{n,1}^{1,0}$ and $Y_{n,0}^1$ are used in \cite{Lu3} to
describe $\mathbf{W}_n^\wedge$ and ${\mathbf{W}_n^{\wedge}}'$
respectively.

(2) Assume $\text{char}(\tk)\neq 2$. $X_n^{1,1},Y_{n,\text{even}}^2$
and $Y_{n,\text{odd}}^2$ are used in \cite{Lu1} to describe the
generalized Springer correspondence for $Sp({2n})$, $SO(2n)$ and
$SO(2n+1)$ respectively.

(3) Assume $\text{char}(\tk)=2$. $X_n^{2,2}$ and $Y_{n,\text{even}}^4$ are used in \cite{LS} to
describe the generalized Springer correspondence for unipotent
classes of $Sp({2n})$ (or $SO(2n+1)$) and $SO(2n)$ respectively.

(4) Assume $\text{char}(\tk)=2$. $Y_{n-1,\text{odd}}^{4}$, $X_n^{3,1}$ and
$X_{n,\text{even}}^{3,1}=\cup_{d\text{ even}}X_{n,d}^{3,1}$ are used
in \cite{Lu4} to describe the generalized Springer correspondence
for disconnected groups $O(2n), G_{2n+1}$ with $G^0$ type $A_{2n}$,
and $G_{2n}$ with $G^0$ type $A_{2n-1}$ respectively.

(5) Assume $\text{char}(\tk)=2$. We will use  $X_n^{2,n+1}, X_{n}^{n+1,n+1}$ and
$Y_{n,\text{even}}^{n+1}$ to describe the Springer correspondence
for $\mathfrak{o}(2n+1)$, $\mathfrak{sp}({2n})$ and $\Lo(2n)$ (or
$\Lo(2n)^*$). The set $D_n^{2,n+1}$ (resp. $D_{n}^{n+1,n+1}$,
$D_{n,\text{even}}^{n+1}$) is in bijection with the set of $O(2n+1)$
(resp. $Sp(2n)$, $O(2n)$)-nilpotent orbits in $\mathfrak{o}(2n+1)$
(resp. $\mathfrak{sp}({2n})$, $\Lo(2n)$).

(6) Assume $\text{char}(\tk)=2$. We will use  $Y_{n,\text{odd}}^{n+1}$ and $X_{n}^{1,n+1}$ to
describe the Springer correspondence for $\mathfrak{o}(2n+1)^*$ and
$\mathfrak{sp}({2n})^*$. The set $D_{n,\text{odd}}^{n+1}$ (resp.
$D_{n}^{1,n+1}$) is in bijection with the set of $O(2n+1)$ (resp.
$Sp(2n)$)-nilpotent orbits in $\mathfrak{o}(2n+1)^*$ (resp.
$\mathfrak{sp}({2n})^*$).

\section{Springer correspondence for symplectic Lie algebras}\label{sec-sy}
Assume that $G=Sp(2n)$ in this section.
\subsection{} Let $x\in\Lg$ be nilpotent. The orbit $\rc$ of
$x$ is characterized by the following data (\cite{Hes}):

(d1) The sizes of the Jordan blocks of $x$ give rise to a partition
of $2n$. We write it as
$\lambda_1\leq\lambda_2\leq\cdots\leq\lambda_{2s+1}$, where
$\lambda_1=0$.

(d2) For each $\lambda_i$, there is an integer $\chi(\lambda_i)$
satisfy $0\leq\chi(\lambda_i)\leq\frac{\lambda_i}{2}$. Moreover,
$\chi(\lambda_i)\geq\chi(\lambda_{i-1}),\lambda_i-\chi(\lambda_i)\geq\lambda_{i-1}-\chi(\lambda_{i-1})$,
$i=2,\ldots,2s+1$.

We can partition the set $\{1,2,\ldots,2s+1\}$ in a unique way into
blocks of length 1 or 2 such that the following holds:

(b1) If $\chi(\lambda_i)=\lambda_i/2$, then $\{i\}$ is one block;

(b2) All other blocks consist of two consecutive integers.
\newline Note that if $\{i,i+1\}$ is a block, then
$\lambda_i=\lambda_{i+1}$ and $\chi(\lambda_i)=\chi(\lambda_{i+1})$.

We attach to the orbit $\rc$ the sequence $c_1,\ldots,c_{2s+1}$
defined as follows:

(1) If $\{i\}$ is a block, then $c_i=\lambda_i/2+(n+1)(i-1)$;

(2) If $\{i,i+1\}$ is a block, then
$c_i=\lambda_i-\chi(\lambda_i)+(n+1)(i-1)$,
$c_{i+1}=\chi(\lambda_{i+1})+(n+1)i$.

Taking $a_i=c_{2i-1}$, $i=1,\ldots,s+1$, $b_i=c_{2i}$,
$i=1,\ldots,s$, we get a well defined element $(A,B)\in
X_{n,1}^{n+1,n+1}$. We denote it $\rho_G(x)$, $\rho(x)$ or
$\rho(\rc)$.

\begin{lem}
$\mathrm{(i)}$ $\rc\mapsto\rho(\rc)$ defines a bijection from the
set of all nilpotent $Sp(2n)$-orbits in $\mathfrak{sp}(2n)$ to
$D_{n}^{n+1,n+1}$.

$\mathrm{(ii)}$ $A_G(x)^\wedge$ is isomorphic to
$V_{\rho(x)}^{n+1,n+1}$.
\end{lem}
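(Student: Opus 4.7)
The plan is to reduce both parts of the lemma to combinatorial verifications on the sequence $c_1,\ldots,c_{2m+1}$ attached to $\rc$.

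For (i), the first task is to verify that $\rho(x)$ lies in $X_{n,1}^{n+1,n+1}$. The gap conditions $a_{i+1}-a_i\geq 2n+2$, $b_{i+1}-b_i\geq 2n+2$, and $b_1\geq n+1$ all reduce to elementary inequalities on the "intrinsic residues" $c_j-(n+1)(j-1)$, using that $\chi(\lambda_j)$ and $\lambda_j-\chi(\lambda_j)$ are weakly increasing. For the sum condition, each singleton block $\{i\}$ contributes intrinsic value $\lambda_i/2$ while each $2$-block $\{i,i+1\}$ contributes $(\lambda_i-\chi(\lambda_i))+\chi(\lambda_i)=\lambda_i$, and in the latter case the value $\lambda_i$ appears with multiplicity $\geq 2$ in $\lambda$; summing with $\sum_i\lambda_i=2n$ yields intrinsic sum $=n$, while the shift term is $(n+1)m(2m+1)$, matching the formula with $r=s=n+1,\ d=1,\ e=0$. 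Distinguishedness $c_1\leq c_2\leq\cdots\leq c_{2m+1}$ is the heart of (i). Inside a $2$-block, $c_{i+1}-c_i=2\chi(\lambda_i)-\lambda_i+(n+1)$; crucially, $\lambda_i=\lambda_{i+1}$ with multiplicity $\geq 2$ forces $\lambda_i\leq n$, so this difference lies in $[1,n]$. Between blocks, the four sub-cases (singleton/$2$-block followed by singleton/$2$-block) give $c_{i+1}-c_i\geq n+1$ via the monotonicity of $\chi$ and of $\lambda-\chi$, with upper bound $\leq 2n+1$.

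The inverse map is then immediate: given distinguished $(A,B)\in D_n^{n+1,n+1}$, set $c_{2i-1}=a_i$ and $c_{2i}=b_i$, detect the block partition by the dichotomy $c_{i+1}-c_i\leq n$ (intra-$2$-block) versus $c_{i+1}-c_i\geq n+1$ (inter-block), and read off $\lambda_i=2(c_i-(n+1)(i-1))$ for singletons, and $\lambda_i=c_i+c_{i+1}-(n+1)(2i-1)$, $\chi(\lambda_i)=c_{i+1}-(n+1)i$ for $2$-blocks. One checks that the Hesselink conditions on $(\lambda,\chi)$ translate exactly to the defining inequalities of $D_n^{n+1,n+1}$, establishing the bijection.

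For (ii), the centralizer of a nilpotent $x\in\mathfrak{sp}(2n)$ is connected in characteristic $2$ (as recalled in \ref{ss1-3}), so $A_G(x)^\wedge$ is a one-element set. It therefore suffices to show that $V_{\rho(x)}^{n+1,n+1}=\{0\}$, equivalently that $\rho(x)$ has no proper interval. From the dichotomy in the previous paragraph, every consecutive pair of distinct $c$-values satisfies $c_{i+1}-c_i\leq 2n+1<2n+2=r+s$, so all elements of $S=(A\cup B)\setminus(A\cap B)$ fall into a single interval. Since $c_1=0$ lies in $A$ but not in $B$ (as $b_1\geq n+1>0$), $0\in S$, and that unique interval contains an element $<s=n+1$, hence is initial and not proper. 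Consequently the set of proper intervals is empty and $V_{\rho(x)}^{n+1,n+1}$ is the zero vector space, matching $A_G(x)^\wedge$. The main obstacle in the whole argument is the case analysis isolating the dichotomy $c_{i+1}-c_i\leq n$ vs.\ $c_{i+1}-c_i\geq n+1$; once the crucial bound $\lambda_i\leq n$ inside $2$-blocks is used, everything else is routine bookkeeping.
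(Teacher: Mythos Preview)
Your argument is correct and proceeds along the same lines as the paper's: both rest on the block dichotomy $c_{i+1}-c_i\le n$ (inside a $2$-block) versus $c_{i+1}-c_i\ge n+1$ (between blocks), and for (ii) both observe that all consecutive gaps are $<2n+2$ while $c_1=0<n+1$, so the unique interval is initial. The only real difference is how surjectivity in (i) is obtained: the paper invokes Spaltenstein's count that the number of nilpotent orbits equals $|\mathbf{W}_n^\wedge|=|X_{n,1}^{n+1,n+1}|$, whereas you verify directly that the inverse recipe (detecting blocks via the dichotomy and reading off $(\lambda,\chi)$) produces valid Hesselink data. Your route is more self-contained at the cost of an extra case-check; the paper's is shorter but imports a nontrivial enumeration.
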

\begin{proof}
(i) It is easily checked from the definition  that $\rho(\rc)\in
D_{n}^{n+1,n+1}$ and the map $\rc\mapsto\rho(\rc)$ is injective.
Note that $X_{n,1}^{n+1,n+1}=D_{n}^{n+1,n+1}$ is in bijection with
$\mathbf{W}_n^\wedge$ and the number of nilpotent orbits is equal to
$|\mathbf{W}_n^\wedge|$ by Spaltenstein \cite{Spal}. Hence the
bijectivity of the map follows. In fact, given $(A,B)\in
D_{n}^{n+1,n+1}$, the corresponding nilpotent orbit can be obtained
as follows. Let $c_1\leq c_2\leq\cdots\leq c_{2s+1}$ be the sequence
$a_1\leq b_1\leq\cdots\leq a_{s+1}$. If $c_{i+1}<c_{i}+(n+1)$, then
$\{i,i+1\}$ is a block. We can recover $\lambda_i=\lambda_{i+1}$ and
$\chi(\lambda_{i})=\chi(\lambda_{i+1})$ from (2) of the definition.
All blocks of length 2 are obtained in this way. For the other
blocks, we can recover $\lambda_i$ and thus
$\chi(\lambda_{i})=\lambda_{i}/2$ from (1) of the definition.

(ii) One easily checks that $(A,B)$ has no proper intervals. It
follows that $V_{\rho(x)}^{n+1,n+1}=\{0\}$. On the other hand,
$A(x)=1$ since $Z_G(x)$ is connected by Spaltenstein \cite{Spal}.
\end{proof}

\subsection{}\label{thm-sc-sp}Consider a pair $(x,\phi)\in\mathfrak{A}_\Lg$, then
$\phi=1$.
\begin{thm}
The Springer correspondence $\gamma:\mathfrak{A}_\Lg\rightarrow
\mathbf{W}_n^\wedge\cong X_{n}^{n+1,n+1}$ is given by
$$(x,1)\mapsto\rho(x).$$
\end{thm}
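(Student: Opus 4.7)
The plan is to proceed by induction on $n$, imitating the argument of \cite{Lu1} in the unipotent setting. The base cases for small $n$ can be verified by direct inspection of the few nilpotent orbits and irreducible characters involved. For the inductive step, I invoke Shoji's observation recalled in \ref{ss1-3}: when $n\geq 3$, an irreducible character of $\mathbf{W}_n$ is determined by its restriction to $\mathbf{W}_{n-1}$. Thus it suffices to show that the assignment $(x,1)\mapsto\rho(x)$ is compatible with the restriction formula $(\mathbf{R})$ along a maximal proper parabolic.

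Fix a proper parabolic $P\subset G=Sp(2n)$ with Levi factor $L\simeq Sp(2n-2)\times GL_1$, so that $\mathbf{W}_L=\mathbf{W}_{n-1}$. Because the centralizer of any nilpotent element in a symplectic Lie algebra in characteristic $2$ is connected (Spaltenstein \cite{Spal}), one has $A_G(x)=A_L(x')=1$ for all nilpotent $x\in\Lg$ and $x'\in\Ll$. The formula $(\mathbf{R})$ then collapses to
\begin{equation*}
\langle \text{Res}^{\mathbf{W}_n}_{\mathbf{W}_{n-1}}\rho^G_{x,1},\,\rho^L_{x',1}\rangle_{\mathbf{W}_{n-1}}\;=\;\dim\varepsilon_{x,x'}\;=\;|S_{x,x'}|,
\end{equation*}
and by the analysis in \ref{ssec-sxx} this integer is $0$ or $1$. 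Hence $\text{Res}^{\mathbf{W}_n}_{\mathbf{W}_{n-1}}\rho^G_{x,1}$ is the multiplicity-free sum of $\rho^L_{x',1}$ over those $L$-orbits $\rc'$ for which $\dim\bX=\dim\cB_x$. The symplectic analog of Proposition \ref{prop-1}(b), due to Spaltenstein \cite{Spal} (type (a) transitions of the orthogonal case do not occur here), lists the allowed $\rc'$ explicitly: each arises by lowering a pair $\lambda_{i+1}=\lambda_i>\lambda_{i-1}$ of equal parts by one, together with a choice $\chi'(\lambda_i')\in\{\chi(\lambda_i),\chi(\lambda_i)-1\}$ satisfying the usual inequalities.

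The remaining task is the combinatorial matching. Under the bijection $\mathbf{W}_n^\wedge\cong X_{n,1}^{0,0}\to X_{n,1}^{n+1,n+1}$ given by $\Lambda\mapsto\Lambda+\Lambda_{0,1}^{n+1,n+1}$, the classical Pieri-type rule for $\text{Res}^{\mathbf{W}_n}_{\mathbf{W}_{n-1}}$, namely removing one box from one of the two partitions of the ordered pair, translates into the following: decrement exactly one entry of the sequence $(a_1,b_1,a_2,b_2,\ldots,a_{m+1})$ by $1$ in such a way that the defining inequalities at the lower parameter $(r,s)=(n,n)$ still hold. The main obstacle is thus a case-by-case bookkeeping on the partition of $\{1,\ldots,2m+1\}$ into blocks of length $1$ or $2$: for each allowed block-altering transition from Spaltenstein's list one must identify, using the formulas $c_i=\lambda_i/2+(n+1)(i-1)$ for singleton blocks and $c_i=\lambda_i-\chi(\lambda_i)+(n+1)(i-1)$, $c_{i+1}=\chi(\lambda_{i+1})+(n+1)i$ for double blocks, exactly which coordinate of $\rho(\rc)$ is decremented, and verify that the two choices $\chi'(\lambda_i')\in\{\chi(\lambda_i),\chi(\lambda_i)-1\}$ correspond respectively to lowering the $b$-coordinate and the $a$-coordinate (or vice versa) in the relevant block. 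Once this is complete, the lists of $\rc'$ produced geometrically and combinatorially coincide, and Shoji's observation closes the induction to give $\gamma(x,1)=\rho(x)$ for all $n$.
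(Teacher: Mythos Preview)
Your overall strategy coincides with the paper's: induction on $n$ using Shoji's observation and the restriction formula $(\mathbf{R})$, with the inductive step reduced to a combinatorial matching between Spaltenstein's list of admissible $\rc'$ and the Pieri-type branching rule for $\mathbf{W}_n\downarrow\mathbf{W}_{n-1}$. There is, however, a genuine gap in the geometric input you invoke.

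You assert that every admissible $\rc'$ arises by lowering a pair of equal parts $\lambda_{i+1}=\lambda_i>\lambda_{i-1}$ by one, on the grounds that ``type (a) transitions of the orthogonal case do not occur here.'' It is true that the orthogonal type (a) does not apply, but Spaltenstein's symplectic result (Proposition \ref{prop-1-sp}) has its own case (a), of a different shape: when $\{i\}$ is a singleton block (that is, $\chi(\lambda_i)=\lambda_i/2$), $\lambda_i-\lambda_{i-1}\geq 2$, and $\chi(\lambda_j)\geq\lambda_j-\lambda_i/2+1$ for all $j<i$, one obtains an admissible $\rc'$ with $\lambda_i'=\lambda_i-2$, $\chi'(\lambda_i')=\lambda_i'/2$, and all other data unchanged. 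These transitions are not covered by your description, and omitting them makes your list of $\rc'$ with $S_{x,x'}\neq\emptyset$ strictly too small; for instance, the orbit $(4)_2(0)_0(0)_0$ in $\mathfrak{sp}(4)$ has no equal positive parts at all, yet its restriction is nonzero. In the combinatorics, case (a) is exactly what accounts for the Pieri move that decrements the coordinate $c_i=\lambda_i/2+(n+1)(i-1)$ attached to a singleton block, so once you include it the matching goes through as the paper states.

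One smaller point: because the target of the restriction is $X_{n-1,1}^{n,n}$ rather than $X_{n-1,1}^{n+1,n+1}$, the branching rule in the $(n+1,n+1)$ coordinates is not ``decrement one $c_i$ by $1$'' but ``decrement $c_i$ by $i$ and every other $c_j$ by $j-1$''; this is merely a coordinate shift and does not affect the argument, but it is worth stating precisely when carrying out the case check.
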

\begin{rmk}
Theorem rewrites the description of Springer correspondence given by
Spaltenstein \cite{Spal} using pairs of partitions. Note that he
works under the assumption that the theory of Springer
representations is valid for $\Lg$ in characteristic 2.
\end{rmk}

\subsection{}\label{lemma-1}Let $\rc_{reg}$ be the nilpotent $G$-orbit in $\Lg$
which is open dense in the nilpotent variety $\cN$ of $\Lg$. Let
$\rc_{0}$ be the $0$ orbit.

\begin{lem}
The pair $(\rc_{reg},\bar{\bQ}_l)$ corresponds to the unit
representation and the pair $(\rc_{trivial},\bar{\bQ}_l)$
corresponds to the sign representation.
\end{lem}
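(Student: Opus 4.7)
The plan is to invoke Theorem \ref{thm-sc-sp}, which reduces the claim to a purely combinatorial verification: I will compute $\rho(\rc_{reg})$ and $\rho(\rc_{0})$ directly from the recipe stated at the beginning of this section, and then identify the resulting elements of $X_{n,1}^{n+1,n+1}$ with the images of the trivial and sign representations under the chain of bijections $\mathbf{W}_n^\wedge\cong X_{n,1}^{0,0}\cong X_{n,1}^{n+1,n+1}$, where the last bijection is $\Lambda\mapsto\Lambda+\Lambda_{0,1}^{n+1,n+1}$.

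For $\rc_{reg}$, the regular nilpotent in $\mathfrak{sp}(2n)$ has a single Jordan block of size $2n$, and among the orbits with this Jordan type the regular one is characterized by $\chi(2n)=n$ (the maximum permitted value, giving the orbit of top dimension). Padding to odd length produces $m=1$, $\lambda_{1}=\lambda_{2}=0$, $\lambda_{3}=2n$, with $\chi(\lambda_{i})=\lambda_{i}/2$ for each $i$, so all three indices form singleton blocks. The recipe then gives $c_{1}=0$, $c_{2}=n+1$, $c_{3}=3n+2$, hence $\rho(\rc_{reg})=((0,3n+2),(n+1))$. On the other hand, the trivial representation $(\mu,\nu)=((n),\emptyset)$ corresponds to $((n),\emptyset)\in X_{n,1}^{0,0}$, and one application of the shift $\sigma_{n+1,n+1}$ to $((n),\emptyset)+\Lambda_{0,1}^{n+1,n+1}=((n),\emptyset)$ yields exactly $((0,3n+2),(n+1))$.

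For $\rc_{0}$, the zero element gives $m=n$, $\lambda_{1}=0$, $\lambda_{2}=\cdots=\lambda_{2n+1}=1$, and $\chi\equiv 0$; the block decomposition is $\{1\}$ together with the pairs $\{2i,2i+1\}$ for $i=1,\ldots,n$. A straightforward calculation produces
\begin{equation*}
\rho(\rc_{0})=\bigl((0,2(n+1),\ldots,2n(n+1)),\,(n+2,3n+4,\ldots,(2n-1)(n+1)+1)\bigr).
\end{equation*}
The sign representation $(\mu,\nu)=(\emptyset,(1^{n}))$ corresponds in $X_{n,1}^{0,0}$ to $((0^{n+1}),(1^{n}))$. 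Applying $\sigma_{n+1,n+1}$ a total of $n$ times to the representative $((0),\emptyset)$ of $\Lambda_{0,1}^{n+1,n+1}$ aligns its length with that of $((0^{n+1}),(1^{n}))$, and termwise addition then reproduces the same sequence.

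The only obstacle is combinatorial bookkeeping — choosing compatible representatives under the shift before adding — and no further mathematical input is needed beyond Theorem \ref{thm-sc-sp} and the explicit description of the bijection $\mathbf{W}_n^\wedge\cong X_{n,1}^{n+1,n+1}$ recorded in Section 6.
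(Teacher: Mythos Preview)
Your argument is circular. You invoke Theorem \ref{thm-sc-sp} to prove this lemma, but if you look at the proof of Theorem \ref{thm-sc-sp} in the paper, it \emph{uses} Lemma \ref{lemma-1} to settle the base cases $n=1$ and $n=2$ (and implicitly to pin down the two ambiguous characters at $n=2$ that are not determined by restriction alone). So the theorem is not available at this point; the lemma must be proved independently of it.

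The paper's proof avoids this by arguing directly from the sheaf-theoretic definition of the Springer correspondence. From the decomposition
\[
\varphi_!\bar{\bQ}_{lX}|_{\cN}[\dim G-n]=\bigoplus_{(\rc,\cF)\in\mathfrak{A}_\Lg}\chi_{(\rc,\cF)}\otimes IC(\bar{\rc},\cF)[\dim \rc]
\]
one reads off, at $x=0$, that $H^{\dim G-n}(\cB,\bar{\bQ}_l)=\chi_{(\rc_0,\bar{\bQ}_l)}$ (top cohomology of the flag variety, hence the sign representation) and $H^{0}(\cB,\bar{\bQ}_l)=\chi_{(\rc_{reg},\bar{\bQ}_l)}$ (degree-zero cohomology, hence the trivial representation). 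This argument uses nothing about the combinatorial map $\rho$ and is therefore legitimate as input to the inductive proof of Theorem \ref{thm-sc-sp}. Your combinatorial computations for $\rho(\rc_{reg})$ and $\rho(\rc_0)$ are fine as a sanity check \emph{after} the theorem is established, but they cannot serve as a proof of the lemma.
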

\begin{proof}
One can show that the Weyl group action on $H^i(\cB)$ defined in
\cite{X1} coincides with the classical action. Assume $\chi\in
\mathbf{W}_n^\wedge$ correspond to the pair
$(\rc,\cF)\in\mathfrak{A}_\Lg$. We write $\chi=\chi_{(\rc,\cF)}$.
Recall that we have the following decomposition
$$\varphi_!\bar{\bQ}_{lX}|_{\cN}[\dim
G-n]=\bigoplus_{(\rc,\cF)\in\mathfrak{A}_\Lg}\chi_{(\rc,\cF)}\otimes
IC(\bar{\rc},\cF)[\dim c] .$$ Thus for $x\in \cN$,
$$H^{2i}(\cB_x,\bar{\bQ}_l)=\bigoplus_{(\rc,\cF)}\chi_{(\rc,\cF)}\otimes
(\cH^{2i+\dim\rc-\dim G+n}IC(\bar{\rc},\cF))_x.$$ Taking $i=(\dim
G-n)/2$ and $x=0$, we get $H^{\dim
G-n}(\cB,\bar{\bQ}_l)=\chi_{(\rc_0,\bar{\bQ}_l)}$ since
$A(\rc_0)=1$. It follows that $\chi_{(\rc_0,\bar{\bQ}_l)}$ is the
sign representation. Taking $i=0$ and $x=0$, we get
$H^{0}(\cB,\bar{\bQ}_l)=\chi_{(\rc_{reg},\bar{\bQ}_l)}\otimes
(\cH^{0}IC(\overline{\rc_{reg}},\bar{\bQ}_l))_0$ since
$A(\rc_{reg})=1$. It follows that $\chi_{(\rc_{reg},\bar{\bQ}_l)}$
is the unit representation.
\end{proof}
\begin{proof}[Proof of Theorem \ref{thm-sc-sp}]
By the discussion in   \ref{ss1-3}, it is enough to show that the
map $\gamma$ is compatible with the restriction formula
$(\mathbf{R})$. When $n=1$, by Lemma \ref{lemma-1}, the pair
$(\rc_{reg},1)$ corresponds to the unit representation and the pair
$(\rc_0,1)$ corresponds to the sign representation. When $n=2$,
there are two representations of $\mathbf{W}_2$ restricting to unit
representation and two representations of $\mathbf{W}_2$ restricting
to sign representation. But again we know the pair $(\rc_{reg},1)$
corresponds to the unit representation and the pair $(\rc_0,1)$
corresponds to the sign representation. When $n\geq 3$, we show that
the map $\gamma$ is compatible with the restriction formula. Let
$x\in\Lg$ and $x'\in\Ll$ be nilpotent elements. Note that we have
$A_G(x)=A_L(x')=1$. Hence it is enough to show
that\begin{equation}\label{eqn-3} \la 1,\varepsilon_{x,x'} \ra=\la
\text{Res}^{\mathbf{W}_n}_{\mathbf{W}_{n-1}}\rho_G(x),\rho_L(x')\ra_{\mathbf{W}_{n-1}}.
\end{equation}

Note that $X_{n,d}^{n+1,n+1}=\emptyset$ if $d\neq 1$ is odd. Thus
$X_{n}^{n+1,n+1}=X_{n,1}^{n+1,n+1}$. Let $(A,B)\in X_{n}^{n+1,n+1}$
correspond to $\chi\in \mathbf{W}_n^\wedge$. The pairs $(A',B')\in
X_{n-1}^{n,n}$ which correspond to the components of the restriction
of $\chi$ to $\mathbf{W}_{n-1}$ are those which can be deduced from
$(A,B)$ by decreasing one of the entries $c_i$ by $i$ and decreasing
all other entries $c_j$ by $j-1$. This can be done if and only if
$i\geq 3$ and $c_i-c_{i-2}\geq 2n+3$, $i=2$, $c_i\geq n+2$ or $i=1$,
$c_1\geq 1$. We write $(A,B)\to(A',B')$ if they are related in this
way.

Now (\ref{eqn-3}) follows since $S_{x,x'}\neq\emptyset$ if and only
if $x,x'$ are as in Proposition \ref{prop-1-sp} (see below) if and
only if $\rho_G(x)\to\rho_L(x')$.
\end{proof}

\subsection{}\label{prop-1-sp}
Consider a nilpotent class $\rc'\in f_x(\cP_x)$ corresponding to
$(\lambda_{2s+1}')_{\chi'(\lambda_{2s+1}')}\cdots(\lambda_1')_{\chi'(\lambda_{1}')}:=(\lambda',\chi')$.
Suppose $Y=f_x^{-1}(\rc')$ and $X=\varrho_x^{-1}(Y)$. (Notations are
as in \ref{ss1-2}.)

\begin{prop}[\cite{Spal}]
The group $Z_G(x)$ acts transitively on $Y$. We have $\dim X=\dim
\cB_x$ if and only if $(\lambda',\chi')$ satisfies a) or b):

\noindent a) Assume $\lambda_i-\lambda_{i-1}\geq 2$,
$\chi(\lambda_i)=\lambda_i/2$ and
$\chi(\lambda_j)\geq\lambda_j-\lambda_i/2+1$ for each $j<i$.
$\lambda_j'=\lambda_j$, $j\neq i$, $\lambda_i'=\lambda_i-2$,
$\chi'(\lambda_j')=\chi(\lambda_j)$ for each $j\neq i$ and
$\chi'(\lambda_i')=\lambda'_i/2$. In this case $\dim Y=2s-i+1$.

\noindent b) Assume $\lambda_{i+1}=\lambda_i>\lambda_{i-1}$.
$\lambda_j'=\lambda_j,j\neq i,i+1$,
$\lambda_{i+1}'=\lambda_i'=\lambda_i-1$,
$\chi'(\lambda_j')=\chi(\lambda_j)$ for each $j\neq i,i+1$ and
$\chi'(\lambda_i')=\chi'(\lambda_{i+1}')\in\{\chi(\lambda_i),\chi(\lambda_i)-1\}$
satisfy $0\leq\chi'(\lambda_i')\leq\lambda_i'/2$,
$\chi(\lambda_{i-1})\leq\chi'(\lambda_i')\leq\chi(\lambda_{i-1})+\lambda_i-\lambda_{i-1}-1$.
We have $\dim Y=2s-i+1$ if $\chi'(\lambda_i')=\chi(\lambda_i)$ and
$\dim Y=2s-i$ if $\chi'(\lambda_i')=\chi(\lambda_i)-1$.
\end{prop}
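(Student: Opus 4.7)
The plan is to establish this along the lines of Spaltenstein's original argument, which parallels the form-module analysis the paper carries out for orthogonal Lie algebras in \S\ref{sec-or} and for the dual of symplectic Lie algebras in \S\ref{sec-dsp}. The basic geometric observation is that every line in a symplectic vector space is automatically isotropic, so $\cP_x$ is identified with $\mathbb{P}(\ker x)$, and for $\Sigma = \tk v \in \cP_x$ the induced endomorphism $x'$ on $\Sigma^\p/\Sigma$ lies in $\mathfrak{sp}(\Sigma^\p/\Sigma)\cong\Ll$ and determines the orbit $\rc'$.

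First I would fix a form-module decomposition $V = W_{\chi(\lambda_1)}(\lambda_1)\oplus\cdots\oplus W_{\chi(\lambda_{2m+1})}(\lambda_{2m+1})$ of $x$, and stratify $\ker x$ by the filtration $W_i = \ker x \cap \mathrm{Im}(x^{i-1})$. A generic vector $v$ in $W_{\lambda_j}-W_{\lambda_j+1}$ lifts to $v = x^{\lambda_j-1}w$ with $x^{\lambda_j}w = 0$ and $w\notin\mathrm{Im}\,x$. To identify the orbit $\rc'$ of the induced element on $\Sigma^\p/\Sigma$, I would repeat the strategy of \ref{ssec-3}: split off a rank-$2$ non-degenerate submodule $M\subset V$ containing $v$, realise $V = M\oplus M^\p$ as form modules, and read off $(\Sigma^\p\cap M)/\Sigma \oplus M^\p$ as a normal form of $x'$. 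The Hesselink invariant of $M$ will be $\chi_M(\lambda_j)=\chi(\lambda_j)$ or $\chi(\lambda_j)-1$ according as a specific quadratic condition on $w$ is non-zero or zero; this accounts for the dichotomy $\chi'(\lambda_i')=\chi(\lambda_i)$ vs.\ $\chi(\lambda_i)-1$ in (b) and forces (a) when $\chi(\lambda_j) = \lambda_j/2$.

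Second, I would enumerate when the resulting $(\lambda',\chi')$ actually arises by imposing the Hesselink constraints $\chi'(\lambda_j')\geq\chi'(\lambda_{j-1}')$ and $\lambda_j'-\chi'(\lambda_j')\geq\lambda_{j-1}'-\chi'(\lambda_{j-1}')$; these propagate into the inequalities $\chi(\lambda_j)\geq\lambda_j-\lambda_i/2+1$ for $j<i$ in (a) and the pinching bounds on $\chi'(\lambda_i')$ in (b). The dimension of $\bY$ is the dimension of the stratum of such $v$'s, computed by a direct count as in \ref{ssec-yc}; the ``generic'' stratum has dimension one greater than the ``special'' substratum where the extra quadratic vanishing holds. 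For the key equality $\dim\bX = \dim\cB_x$, I would use the map $\varrho_x:\bX\to\bY$ whose fibers have dimension at most $\dim\cB_{x'}$, so that $\dim\bX \leq \dim\bY + \dim\cB_{x'}$, and match this against $\dim\cB_x = \nu_G - \tfrac12\dim\rc$ via the Hesselink dimension formula for nilpotent $\mathfrak{sp}$-orbits; equality forces exactly cases (a) and (b).

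Transitivity of $Z_G(x)$ on $\bY$ follows from the connectedness of $Z_G(x)$ (Spaltenstein) and the parameter-space argument used in \ref{prop-gps} and \ref{prop-gps-d2}: introduce $\widetilde{\bY} = \{(\Sigma,M)\}$, show both projections are surjective with $Z_G(x)$ acting transitively on $pr_2(\widetilde{\bY})$ and on each fiber of $pr_2$, then descend to $\bY$. The main obstacle will be the careful bookkeeping in case (b) to distinguish the two subcases $\chi'(\lambda_i')=\chi(\lambda_i)$ and $\chi'(\lambda_i')=\chi(\lambda_i)-1$, which produce the same partition $\lambda'$; one must trace the quadratic condition on $w$ through the form-module reconstruction to see that the second subcase cuts out a codimension-one subvariety of the first, giving the $1$-unit drop from $\dim\bY=2m-i+1$ to $\dim\bY=2m-i$.
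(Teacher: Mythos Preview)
The paper does not prove this proposition: it is quoted verbatim from Spaltenstein \cite{Spal} (note the attribution in the heading), and no argument is supplied in the text. So there is no ``paper's own proof'' to compare against.

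That said, your outline is exactly the form-module argument the paper uses to establish the analogous Propositions~\ref{prop-1}, \ref{prop-dsp}, and \ref{prop-1-d} for the orthogonal and dual cases, and it is the correct approach here as well. The stratification of $\ker x$ by the $W_i$, the splitting-off of a minimal nondegenerate $M$ containing $\Sigma$, and the $\widetilde{\bY}$ transitivity argument all go through in the symplectic case with only notational changes. The one place you should be slightly more careful is case (a): there $\{i\}$ is a block of length one (so $\lambda_i$ need not equal $\lambda_{i+1}$), and the relevant line $\Sigma$ sits inside a single indecomposable $W_{\lambda_i/2}(\lambda_i)$ rather than a rank-$2$ summand; the induced module on $(\Sigma^\perp\cap M)/\Sigma$ is then $W_{(\lambda_i-2)/2}(\lambda_i-2)$, which is where the condition $\chi(\lambda_j)\geq\lambda_j-\lambda_i/2+1$ for $j<i$ comes from (it guarantees that the surrounding $\chi$-values are not disturbed). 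Your sketch as written folds (a) into the ``$\chi(\lambda_j)=\lambda_j/2$'' subcase of the rank-$2$ analysis, which is not quite right structurally, but the fix is minor.
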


\section{Springer correspondence for orthogonal Lie
algebras}\label{sec-scor}

\subsection{}\label{lem-com-oor} In this subsection we assume that $G=O(2n+1)$.

Let
$x=(\lambda_{2s+1})_{\chi(\lambda_{2s+1})}\cdots(\lambda_1)_{\chi(\lambda_1)}\in\Lg$
be a nilpotent element (see \ref{comp-or}). Assume $\lambda_1=0$.
There exists a unique $3\leq m_0\leq 2s+1$ such that ${m_0}$ is odd
and $\lambda_{m_0}>\lambda_{{m_0}-1}$. We have
$\chi(\lambda_j)=\lambda_j$ if $j\leq{m_0}$;
$\lambda_{2j}=\lambda_{2j+1}$, $j\neq\frac{{m_0}-1}{2}$ and
$\lambda_{m_0}=\lambda_{{m_0}-1}+1$.

We attach to the orbit $\rc$ of $x$ the sequence
$c_1,\ldots,c_{2s+1}$ defined as follows:

(1)
$c_{2j}=\left\{\begin{array}{ll}\lambda_{2j}-\chi(\lambda_{2j})+n+1+(j-1)(n+3)
& \text{if } 2j<{m_0}\\
\lambda_{2j}-\chi(\lambda_{2j})+1+n+1+(j-1)(n+3) & \text{if } 2j\geq
{m_0}\end{array}\right.$

(2)
$c_{2j-1}=\left\{\begin{array}{ll}\chi(\lambda_{2j-1})+(j-1)(n+3) &
\text{if } 2j-1<{m_0}\\ \chi(\lambda_{2j-1})-1+(j-1)(n+3) & \text{if
} 2j-1\geq {m_0}\end{array}\right.$.

Taking $a_i=c_{2i-1}$, $i=1,\ldots,s+1$, $b_i=c_{2i}$,
$i=1,\ldots,s$, we get a well-defined element $(A,B)\in
X_{n,1}^{2,n+1}$. We denote it $\rho_G(x)$, $\rho(x)$ or
$\rho(\rc)$.

\begin{lem}
$\mathrm{(i)}$ $\rc\mapsto\rho(\rc)$ defines a bijection from the
set of all nilpotent $O(2n+1)$-orbits in $\mathfrak{o}(2n+1)$ to
$D_{n}^{2,n+1}$.

$\mathrm{(ii)}$ $A_G(x)^\wedge$ is isomorphic to
$V_{\rho(x)}^{2,n+1}$.
\end{lem}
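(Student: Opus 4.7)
The plan is to model the argument on the proof of the symplectic analogue (Theorem~\ref{thm-sc-sp} and the lemma preceding it): establish (i) by explicitly inverting the construction $\rc\mapsto\rho(\rc)$, and establish (ii) by matching the proper intervals of $\rho(x)$ with a generating set of $\tilde{A}(x)$, accounting for the relations (r1)--(r3) from \ref{comp-or}.

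For part (i), I would first verify directly from the definition that $\rho(\rc)\in D_n^{2,n+1}$. The successive differences $a_{j+1}-a_j=c_{2j+1}-c_{2j-1}$ and $b_{j+1}-b_j=c_{2j+2}-c_{2j}$ are $\geq r+s=n+3$ by the monotonicities $\chi(\lambda_i)\geq\chi(\lambda_{i-1})$ and $\lambda_i-\chi(\lambda_i)\geq\lambda_{i-1}-\chi(\lambda_{i-1})$, with the discontinuity at $m_0$ absorbed by the $\pm 1$ shifts appearing in (1) and (2); the inequality $b_1\geq n+1$ follows from (1) with $j=1$; the sum condition is a routine telescoping using $\sum_i\lambda_i=2n+1$; and the distinguished ordering $c_{2j-1}\leq c_{2j}\leq c_{2j+1}$ reduces to $0\leq\chi(\lambda_i)\leq\lambda_i$. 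Injectivity holds because the sequence $(c_1,\ldots,c_{2m+1})$ determines both $m_0$ (the unique odd index at which the defining formula switches, detectable from the residues $c_i\bmod(n+3)$) and the partition into blocks. For surjectivity, given $(A,B)\in D_n^{2,n+1}$ with merged sequence $c_1\leq\cdots\leq c_{2m+1}$, I would declare $\{i,i+1\}$ a length-2 block precisely when $c_{i+1}-c_i<n+3$ and a singleton otherwise, locate $m_0$ as the unique odd index at which the block pattern and offsets are consistent, and invert the formulas (1), (2) to read off $(\lambda,\chi)$; one then checks that the resulting data satisfy (d1), (d2) of \ref{comp-or}.

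For part (ii), I would identify the set $E$ of proper intervals of $\rho(x)$ with an independent generating set of $\tilde{A}(x)$ surviving (r1)--(r3). Each length-2 block $\{2j-1,2j\}$ with $\chi(\lambda_{2j-1})\neq\lambda_{2j-1}/2$ gives a pair $\{c_{2j-1},c_{2j}\}\subset S$ of elements with gap $<n+3$, hence contributes to an interval; the interval-closure condition reproduces (r2), merging neighbouring generators into a single equivalence class; and the initial intervals (those touching values $<s=n+1$) correspond exactly to generators killed by (r3). Consequently $|E|=\dim_{\mathbf{F}_2}\tilde{A}(x)$, and sending the characteristic function of an interval to the dual of the corresponding generator yields the desired isomorphism $V_{\rho(x)}^{2,n+1}\xrightarrow{\sim}\tilde{A}(x)^\wedge=A_G(x)^\wedge$.

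The main obstacle will be the bookkeeping around $m_0$: the formula defining $c_i$ jumps abruptly there, and the relation (r3) --- which kills $a_i$ for $i$ such that $m(\lambda_i)$ is odd, occurring precisely at the block straddling $m_0$ --- must manifest combinatorially as an initial rather than proper interval. Verifying that the singleton block at $m_0$, together with the offset shift, produces exactly the right enumeration of proper intervals, and that the identifications (r2) lift consistently to the $\mathbf{F}_2$-structure on $V_{\rho(x)}^{2,n+1}$, is the subtle point requiring careful case analysis.
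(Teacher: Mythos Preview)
Your outline is essentially correct and matches the paper's argument for part~(ii) almost exactly: the paper observes that $c_1,\ldots,c_{m_0}$ all lie in $S$ and form the unique initial interval, that for $i>m_0$ one has $c_i\in S$ iff $\chi(\lambda_i)\neq\lambda_i/2$, and that (r2) collapses generators lying in the same interval while (r3) kills the initial one, yielding the isomorphism $V_{\rho(x)}^{2,n+1}\cong A_G(x)^\wedge$ just as you describe.

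For part~(i) there is one shortcut you miss. The paper does not argue surjectivity by explicit inversion but by \emph{counting}: $D_n^{2,n+1}$ is in natural bijection with the set of pairs of partitions $(\mu,\nu)$ with $\sum\mu_i+\sum\nu_i=n$ and $\nu_i\leq\mu_i+2$, and Spaltenstein \cite{Spal} already shows this is the number of nilpotent $O(2n+1)$-orbits, so injectivity plus equal cardinality finishes. The explicit inverse is then given only as a supplement; moreover the paper's criterion for locating $m_0$ is cleaner than your residues-mod-$(n+3)$ suggestion: one has $c_{2j}>(n+1)+(j-1)(n+3)$ if and only if $2j>m_0$, so $m_0$ is read off directly from which $b_j$ exceed their baseline value. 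Your block criterion ``$c_{i+1}-c_i<n+3$'' is borrowed from the symplectic case and is not quite the right organizing principle here, since the pairing $\lambda_{2j}=\lambda_{2j+1}$ is already forced by parity (away from $m_0$) rather than detected from gaps.
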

\begin{proof}
(i) It is easily checked from the definition  that $\rho(\rc)\in
D_{n}^{2,n+1}$ and the map $\rc\mapsto\rho(\rc)$ is injective. Note
that $D_{n}^{2,n+1}$ is in bijection with the set $\Delta$
consisting of all pairs of partitions $(\mu,\nu)$ such that
$\sum\mu_i+\sum\nu_i=n,\nu_i\leq\mu_i+2$. Since the number of
nilpotent orbits is equal to $|\Delta|$ by Spaltenstein \cite{Spal},
the bijectivity of the map follows. In fact, given $(A,B)\in
D_{n}^{2,n+1}$, the corresponding nilpotent orbit can be obtained as
follows. Let $c_1\leq c_2\leq\cdots\leq c_{2s+1}$ be the sequence
$a_1\leq b_1\leq\cdots\leq a_{s+1}$. There exists a unique odd
integer $m_0$ such that $c_{2j}>(n+1)+(j-1)(n+3)$ if and only if
$2j>m_0$. If $j<\frac{m_0-1}{2}$, then
$\lambda_{2j}=\lambda_{2j+1}=\chi(\lambda_{2j})=\chi(\lambda_{2j+1})=c_{2j+1}-j(n+3)$.
If $j>\frac{m_0-1}{2}$, then
$\lambda_{2j}=\lambda_{2j+1}=c_{2j}+c_{2j+1}-(2j-1)(n+3)-(n+1)$ and
$\chi(\lambda_{2j})=\chi(\lambda_{2j+1})=c_{2j+1}-j(n+3)+1$. If
$j=\frac{m_0-1}{2}$, then
$\lambda_{2j}=\chi(\lambda_{2j})=\lambda_{2j+1}-1=\chi(\lambda_{2j+1})-1=c_{2j+1}-j(n+3)+1$.

(ii) The component group $A_G(x)$ is described in \ref{comp-or}. Let
$(A,B)=\rho(x)$ and $c_1,\ldots,c_{2s+1}$ be as above. Let $S=(A\cup
B)\backslash (A\cap B)$. Note that $c_1=0,c_2,\ldots,c_{m_0}$ all
lie in $ S$ and they belong to the same interval, which is the
initial interval. For $i>m_0$, $\chi(\lambda_i)\neq\lambda_i/2$ if
and only if $c_i\in S$. The relations (r2) and (r3) of \ref{comp-or}
say that if $c_i,c_j$ belong to the same interval of $(A,B)$, then
$\epsilon_i,\epsilon_j$ have the same images in $A(x)$. Thus we get
an element $\sigma_I$ of $A(x)$ for each interval $I$ of $(A,B)$ and
$\sigma_I^2=1$. Moreover (r3) means that $\sigma_I=1$ if $I$ is the
initial interval.

The isomorphism $V_{\rho(x)}^{2,n+1}\rightarrow A_G(x)^\wedge$ is
given as follows. Let $F\in V_{\rho(x)}^{2,n+1}$. We associate to
$F$ the character of $A_G(x)$ which takes value $-1$ on $\sigma_I$
if and only if $I\in F$.
\end{proof}

\subsection{}\label{prop-sc3}Let $(x,\phi)\in\mathfrak{A}_\Lg$. We have defined
$\rho(x)$. Let $\rho$ denote also the map $A_G(x)^\wedge\rightarrow
V_{\rho(x)}^{2,n+1}$.
\begin{thm}
The Springer correspondence $\gamma:\mathfrak{A}_\Lg\rightarrow
\mathbf{W}_n^\wedge\cong X_n^{2,n+1}$ is given by
$$(x,\phi)\mapsto\rho(x)_{\rho(\phi)}.$$
\end{thm}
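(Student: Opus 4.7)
The plan is to follow the induction strategy used in \cite{Lu1} and mirrored in the proof of Theorem \ref{thm-sc-sp}: establish the correspondence by induction on $n$, using the restriction formula $(\mathbf{R})$ together with Shoji's observation that for $n\geq 3$ an irreducible character of $\mathbf{W}_n$ is completely determined by its restriction to $\mathbf{W}_{n-1}$. So the bijection $\gamma$ is uniquely characterized by compatibility with $(\mathbf{R})$, once it is known on groups of smaller rank; I would verify that the combinatorial assignment $(x,\phi)\mapsto \rho(x)_{\rho(\phi)}$ satisfies this compatibility.

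For the base cases $n=1,2$, I would argue directly. The analog of Lemma \ref{lemma-1} (proved by exactly the same spectral-sequence argument using $\varphi_!\bar{\bQ}_{lX}|_{\cN}$) identifies $(\rc_{\text{reg}},1)$ with the trivial character and $(\rc_0,1)$ with the sign character; combined with a direct tabulation of nilpotent orbits in $\Lo(3)$ and $\Lo(5)$ and a check of their component groups, this pins down $\gamma$ in low rank and verifies the formula there.

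For $n\geq 3$, fix a Levi $L$ of rank $n-1$ as in \ref{ss1-2} and let $x\in\Lg$, $x'\in\Ll$ be nilpotent. The key tasks are:
\begin{enumerate}
\item[(i)] Show that $\varepsilon_{x,x'}\neq 0$ iff $(\rho(x),\rho(x'))$ is related by one of the admissible ``decrease'' moves on pairs in $X_n^{2,n+1}\to X_{n-1}^{2,n}$, and that this happens iff $(x,x')$ falls into cases (a) or (b) of Proposition \ref{prop-1}.
\item[(ii)] When nonzero, verify the character equality
$$\la\phi\otimes\phi',\varepsilon_{x,x'}\ra=\la\text{Res}^{\mathbf{W}_n}_{\mathbf{W}_{n-1}}\rho(x)_{\rho(\phi)},\rho(x')_{\rho(\phi')}\ra.$$
\end{enumerate}
Task (i) is a direct translation of Proposition \ref{prop-1} into the language of the sequences $c_1,\ldots,c_{2m+1}$ of \ref{lem-com-oor}: cases (a) and (b) correspond respectively to decreasing $c_{i+1}$ and $c_{i+2}$ by the amounts that realize the passage from a pair in $D_n^{2,n+1}$ to one in $D_{n-1}^{2,n}$ via the standard restriction rule (entry $c_i$ decreases by $i$, all others by $i-1$). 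One then checks that the subset of admissible pairs $(A,B)\to(A',B')$ in $X_n^{2,n+1}\times X_{n-1}^{2,n}$ parametrizing $\text{Res}_{\mathbf{W}_{n-1}}^{\mathbf{W}_n}$ matches the cases of Proposition \ref{prop-1} bijectively.

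Task (ii) uses the explicit description of $S_{x,x'}$. By \ref{ssec-sxx} and Proposition \ref{prop-gps}, $S_{x,x'}=A_G(x)\times A_L(x')/H_{x,x'}$ where $H_{x,x'}$ is specified by the triple $(A_P,A_P',h)$, and $A_P, A_P'$ are computed in Proposition \ref{prop-gps} and its two corollaries. Hence $\varepsilon_{x,x'}=\text{Ind}_{H_{x,x'}}^{A_G(x)\times A_L(x')}(1)$, and Frobenius reciprocity reduces the inner product on the left of (ii) to a combinatorial statement about characters of $A_G(x)$ and $A_L(x')$. Under the isomorphisms $A_G(x)^\wedge\cong V_{\rho(x)}^{2,n+1}$ and $A_L(x')^\wedge\cong V_{\rho(x')}^{2,n}$ of \ref{lem-com-oor}, the triple $(A_P,A_P',h)$ translates into precise matching conditions on the sets of proper intervals of $\rho(x)$ and $\rho(x')$. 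The right-hand side of (ii) is computed combinatorially on the $X^{2,n+1}$-side by the same interval matching, following the pattern in \cite[\S 11]{Lu1}, and the two answers coincide.

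The main obstacle is the bookkeeping in task (ii): matching the group-theoretic data $(A_P,A_P',h)$ produced by Propositions \ref{prop-gps} and its corollaries with the interval-theoretic action of $V_{\rho(x)}^{2,n+1}$ on similarity classes. The first corollary handles intervals of $\rho(x)$ that split into two intervals in $\rho(x')$ (the case $|A(x)/A_P|=2$), while the second handles intervals of $\rho(x')$ obtained by merging two intervals of $\rho(x)$ (the case $A_P'\neq 1$). Once the dictionary between these two types of interval changes and the decrease operations of task (i) is established, the equality in (ii) reduces to a routine verification case by case, and the theorem follows by induction.
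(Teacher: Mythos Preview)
Your proposal is correct and follows essentially the same strategy as the paper's own proof: reduce to compatibility with $(\mathbf{R})$ via Shoji's observation, translate Proposition \ref{prop-1} into the decrease moves $(A,B)\to(A',B')$ on $X_n^{2,n+1}$, and then match the permutation representation $\varepsilon_{x,x'}=\Ind_{H_{x,x'}}^{A_G(x)\times A_L(x')}(1)$ against the interval combinatorics on similarity classes. One small slip in your final paragraph: you have the roles of the two corollaries to Proposition \ref{prop-gps} reversed---the case $A_P'\neq 1$ (second corollary) is the one where a single interval $I$ of $\rho(x)$ splits into two intervals $I',J'$ of $\rho(x')$ (this is exactly the case the paper works out in detail), while $|A(x)/A_P|=2$ (first corollary) corresponds to an interval of $\rho(x)$ whose generator ceases to appear in $A'(x')$; this does not affect the soundness of the argument.
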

\begin{proof} As in the proof of
Theorem \ref{thm-sc-sp}, it is enough to prove the map $\gamma$ is
compatible with the restriction formula $(\mathbf{R})$. Note that
$X_{n,d}^{2,n+1}=\emptyset$ if $d\neq 1$ is odd. Thus
$X_{n}^{2,n+1}=X_{n,1}^{2,n+1}$. Let $(A,B)\in X_{n}^{2,n+1}$
correspond to $\chi\in \mathbf{W}_n^\wedge$. The pairs $(A',B')\in
X_{n-1}^{2,n}$ which correspond to the components of the restriction
of $\chi$ to $\mathbf{W}_{n-1}$ are those which can be deduced from
$(A,B)$ by decreasing one of the entries $a_{i}$ by $i$ (or $b_{i}$
by $i+1$) and decreasing all other entries $a_j$ by $j-1$, $b_j$ by
$j$. We can decrease $a_{i}$ by $i$ (resp. $b_{i}$ by $i+1$) if and
only if $i\geq 2$, $a_i-a_{i-1}\geq n+4$ or $i=1$, $a_i\geq 1$
(resp. $i\geq2$, $b_i-b_{i-1}\geq n+4$ or $i=1$, $b_i\geq n+2$). We
write $(A,B)\to(A',B')$ if they are related in this way. Suppose
that $(A,B)\to(A',B')$. One can easily check that if $(A,B)$ and
$(A',B')$ are similar to $\Lambda\in D_n^{2,n+1}$ and $\Lambda'\in
D_{n-1}^{2,n+1}$ respectively, then $\Lambda\to\Lambda'$.

Let $x\in\Lg$ nilpotent and $x'\in\Ll$ nilpotent. Then
$S_{x,x'}\neq\emptyset$ if and only if $x,x'$ are as in Proposition
\ref{prop-1} if and only if
$\Lambda=\rho_G(x)\to\Lambda'=\rho_L(x')$. To verify the map is
compatible with the restriction formula, it is enough to show that
the set \begin{equation}\label{eqn-4}\{(F,F')\in
V_{\rho_G(x)}^{2,n+1}\times
V_{\rho_L(x')}^{2,n}|\Lambda_F\to\Lambda'_{F'}\}\end{equation} is
the image of the set \begin{equation}\label{eqn-5} \{(\phi,\phi')\in
A_G(x)^\wedge\times A_L(x')^\wedge|\la
\phi\otimes\phi',\varepsilon_{x,x'}\ra\neq 0\}\end{equation} under
the map $\rho$.

Let $c_1\leq\cdots\leq c_{2s+1}$ and $c_1'\leq\cdots\leq c_{2s+1}'$
correspond to $\Lambda$ and $\Lambda'$ respectively. Then $A_G(x)$
is generated by $\{\epsilon_i|c_i\neq c_j,\forall j\neq i\}$,
$A_L(x')$ is generated by $\{\epsilon_i'|c_i'\neq c_j',\forall j\neq
i\}$ and $A_P$ is generated by $\{\epsilon_i|c_i\neq c_j,c_i'\neq
c_j',\forall j\neq i\}$. There are various cases to consider. We
describe one of the cases in the following and the other cases are
similar.

Assume $c_{2k+1}>c_{2k}+1$, $c_{2k+2}=c_{2k+1}+n+2$ and
$c_{2k+1}'=c_{2k+1}-(k+1)$, $c_{2i+1}'=c_{2i+1}-i$,$i\neq k$,
$c_{2i}'=c_{2i}-i$, $i=1,\ldots,s$. Let $I$ (resp. $I'$) be the
interval of $\Lambda$ (resp. $\Lambda'$) containing $c_{2k+1}$
(resp. $c_{2k+1}'$) and $J'$ the interval of $\Lambda'$ containing
$c_{2k+2}'$. Note that $c_{2j+1}-c_{2j}<n+2$, except if
$x=(n+1)_{n+1}n_n$. In the latter case $A(x)=1$. Moreover
$c_{2j+2}-c_{2j+1}\geq 2$. Hence all other intervals of $\Lambda$
and $\Lambda'$ can be identified naturally. There are two
possibilities:

(i) $I$ is a proper interval of $\Lambda$. Then
$\Lambda_F\to\Lambda_{F'}$ if and only if
\begin{itemize}
\item[(a)]$F\backslash\{I\}=F'\backslash\{I',J'\}$;
\item[(b)]$F\cap\{I\}=F'\cap\{I',J'\}=\emptyset$ or $\{I\}\subset F,\{I',J'\}\subset
F'$.
\end{itemize}
On the other hand, $A_G(x)$ (resp. $A_L(x')$) is an $\tF_2$ vector
space with one basis element $\sigma_K$ (resp. $\sigma_{K}'$) for
each proper interval $K$ of $\Lambda$ (resp. $\Lambda'$) and
$S_{x,x'}$ is the quotient of $A_G(x)\times A_L(x')$ by the subgroup
$H_{x,x'}$ generated by elements of the form
$\sigma_I\sigma'_{I'},\sigma_I\sigma'_{J'},\sigma_K\sigma'_K$ with
$K$ a proper interval of both $\Lambda$  and $\Lambda'$. Now the
compatibility between (\ref{eqn-4}) and (\ref{eqn-5}) is clear.

(ii) $I$ is an initial interval of $\Lambda$. Then
$\Lambda_F\to\Lambda_{F'}$ if and only if $F=F'$. On the other hand,
$A_G(x)$, $A_L(x')$ and $S_{x,x'}$ are obtained by setting
$\sigma_I=\sigma'_{I'}=1$ in (i). Again the compatibility between
(\ref{eqn-4}) and (\ref{eqn-5}) is clear.
\end{proof}

\subsection{}\label{ss-scor2}In this subsection we assume $G=SO(2n)$,
$\tilde{G}=O(2n)$ and $\Lg=\mathfrak{o}(2n)$. We describe
$\tilde{\gamma}:\tilde{\mathfrak{A}}_\Lg\rightarrow
{\mathbf{W}_n^\wedge}'$ instead of
$\gamma:\mathfrak{A}_\Lg\rightarrow (\mathbf{W}_n')^\wedge$ (see
\ref{ssec-weylgp}).

Let
$x=(\lambda_{2s})_{\chi(\lambda_{2s})}\cdots(\lambda_{1})_{\chi(\lambda_{1})}\in\Lg$
be a nilpotent element (see \ref{comp-or}). Note that we have
$\lambda_{2i-1}=\lambda_{2i}$. We attach to the orbit $\rc$ of $x$
the sequence $c_1,\ldots,c_{2s}$ defined as follows:

(1) $c_{2j}=\chi(\lambda_{2j})+(j-1)(n+1)$,

(2) $c_{2j-1}=\lambda_{2j-1}-\chi(\lambda_{2j-1})+(j-1)(n+1)$.

Taking $a_i=c_{2i-1}$, $b_i=c_{2i}$, $i=1,\ldots,s$, we get a well
defined element $\{A,B\}\in Y_{n,0}^{n+1}$. We denote it
$\rho_G(x)$, $\rho(x)$ or $\rho(\rc)$.

\begin{lem}
$\mathrm{(i)}$ $\rc\mapsto\rho(\rc)$ defines a bijection from the
set of all nilpotent $O(2n)$-orbits in $\mathfrak{o}(2n)$ to
$D_{n,\text{even}}^{n+1}$.

$\mathrm{(ii)}$ $A_G(x)^\wedge$ is isomorphic to
$V_{\rho(x)}^{n+1}$.
\end{lem}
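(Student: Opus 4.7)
The plan is to imitate the proofs of the analogous statements in the symplectic case (Section \ref{sec-sy}) and the odd orthogonal case (first lemma of Section \ref{sec-scor}), adapting them to $\tilde G=O(2n)$-orbits, which correspond to unordered pairs $\{A,B\}$, and to the passage from $\tilde A(x)$ to the index-two ``even'' subgroup $A_G(x)$ described in \ref{comp-or}. For (i) I verify that $\rho(\rc)$ lies in $D_{n,\text{even}}^{n+1}$ and write down an explicit inverse; for (ii) I match the generators and relations of $\tilde A(x)$ against the interval structure of $\{A,B\}$ and then read off the even subgroup and its characters.

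For (i), the key computations are
\begin{equation*}
c_{2j}-c_{2j-1}=2\chi(\lambda_{2j})-\lambda_{2j}\geq 0,\qquad c_{2j+1}-c_{2j}=(n+1)+\lambda_{2j+2}-\chi(\lambda_{2j+2})-\chi(\lambda_{2j})\geq 1,
\end{equation*}
together with the Hesselink monotonicity of $\chi$ and of $\lambda-\chi$ (which yields $a_{i+1}-a_i\geq n+1$ and $b_{i+1}-b_i\geq n+1$). These give $c_1\leq c_2\leq\cdots\leq c_{2m}$; the condition $b_1\geq 0=s$ is automatic, and $\sum_j\lambda_{2j}=n$ produces the required sum $n+(n+1)m(m-1)$. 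Hence $\rho(\rc)\in D_{n,0}^{n+1}\subset D_{n,\text{even}}^{n+1}$. Bijectivity follows by explicit inversion: given $\{A,B\}\in D_{n,\text{even}}^{n+1}$ with $c_1\leq\cdots\leq c_{2m}$ and $c_{2j-1}=a_j$, $c_{2j}=b_j$, set
\begin{equation*}
\lambda_{2j}=\lambda_{2j-1}=c_{2j-1}+c_{2j}-2(j-1)(n+1),\qquad \chi(\lambda_{2j})=\chi(\lambda_{2j-1})=c_{2j}-(j-1)(n+1).
\end{equation*}
The distinguished condition and gap inequalities for $\{A,B\}$ translate precisely into conditions (d1), (d2) of \ref{comp-or}, and the two constructions are mutually inverse.

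For (ii), since $m(\lambda_i)$ is always even when $G=O(2n)$, relation (r3) of \ref{comp-or} is vacuous and $\tilde A(x)$ is the $\tF_2$-vector space on $\{a_i:\chi(\lambda_i)\neq\lambda_i/2\}$ modulo $a_i=a_{i+1}$ whenever $\chi(\lambda_i)+\chi(\lambda_{i+1})>\lambda_{i+1}$. The central computations of $c_{i+1}-c_i$ show: (a) $\chi(\lambda_i)\neq\lambda_i/2$ iff $c_i\in S:=(A\cup B)\setminus(A\cap B)$; (b) consecutive $c_i,c_{i+1}\in S$ lie in the same interval iff the defining relation $a_i=a_{i+1}$ of $\tilde A(x)$ is imposed. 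Since $s=0$, no interval is initial, so $E$ consists of all intervals and $\tilde A(x)\cong\tF_2^{|E|}$ with one generator $\sigma_I$ per $I\in E$. Each $\sigma_I$ equals a single original $a_i$, so the parity of word length in the $a_i$'s is well defined and $A_G(x)\subset\tilde A(x)$ is the kernel of the augmentation $\sigma_I\mapsto 1$; its character group therefore has order $2^{|E|-1}=|\mathcal A(E)/\{\emptyset,E\}|=|V_{\rho(x)}^{n+1}|$. The bijection $V_{\rho(x)}^{n+1}\xrightarrow{\sim} A_G(x)^\wedge$ is given by $F\mapsto(\sigma_I\mapsto(-1)^{[I\in F]})|_{A_G(x)}$, which descends to the quotient by $\{\emptyset,E\}$ because any element of $A_G(x)$ is a product of an even number of distinct $\sigma_I$'s.

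I expect the main obstacle to be the bookkeeping in (ii), specifically verifying that the two $\mathbb{Z}/2$-identifications (the one implicit in the unordered pair $\{A,B\}$ and the one giving the passage from $\tilde A(x)$ to $A_G(x)$) match up correctly in the boundary degenerate case $A=B$, which corresponds to $E=\emptyset$. There both $A_G(x)=\tilde A(x)=1$ and $V_{\rho(x)}^{n+1}=0$ collapse to the trivial group, and one must check consistency with the $\tilde G/G$-fixed-point characterization of degenerate orbits from \ref{ssec-weylgp}.
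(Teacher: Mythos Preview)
Your proof is correct and follows essentially the same approach as the paper. The only difference worth noting is in (i): the paper establishes bijectivity by checking injectivity and then invoking Spaltenstein's count of nilpotent $O(2n)$-orbits (the explicit inverse you write down appears in the paper only as a supplementary ``in fact'' remark after bijectivity is already proved), whereas you argue bijectivity directly via the explicit inverse. Your route is more self-contained but requires verifying that the inverse formulas produce valid Hesselink data $(\lambda,\chi)$ satisfying (d1)--(d2), a step you assert rather than carry out; the paper's counting argument sidesteps this check. Part (ii) is argued identically in both.
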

\begin{proof}
(i) It is easily checked from the definition  that $\rho(\rc)\in
D_{n,\text{even}}^{n+1}$ and the map $\rc\mapsto\rho(\rc)$ is
injective. Note that $D_{n,\text{even}}^{n+1}$ is in bijection with
the set $\Delta$ consisting of all pairs of partitions $(\mu,\nu)$
such that $\sum\mu_i+\sum\nu_i=n,\nu_i\leq\mu_i$. Since the number
of nilpotent $O(2n)$-orbits in $\mathfrak{o}(2n)$ is equal to
$|\Delta|$ by Spaltenstein \cite{Spal}, the bijectivity of the map
follows. In fact, given $\{A,B\}\in D_{n,\text{even}}^{n+1}$ with
preimage $(A,B)\in D_{n,0}^{n+1,0}$, the corresponding nilpotent
orbit can be obtained as follows. Let $c_1\leq c_2\leq\cdots\leq
c_{2s}$ be the sequence $a_1\leq b_1\leq\cdots\leq a_s\leq b_{s}$.
We have $\lambda_{2j}=\lambda_{2j-1}=c_{2j}+c_{2j-1}-(2j-2)(n+1)$
and $\chi(\lambda_{2j})=\chi(\lambda_{2j-1})=c_{2j}-(j-1)(n+1)$.

(ii) The component group $A_G(x)$ is described in \ref{comp-or}.
Note that in this case, the condition (r3) is void. By similar
argument as in the proof of Lemma \ref{lem-com-oor} (ii), one shows
that $A_{\tilde{G}}(x)$ is a vector space over $\tF_2$ with basis
$(\sigma_I)_{I\in E}$, where $E$ is the set of all intervals of
$\rho(x)$.  Since $A_{G}(x)$ consists of the elements in
$A_{\tilde{G}}(x)$ which can be written as a product of even number
of generators, from the natural identification
$A_{\tilde{G}}(x)^\wedge=\mathcal{A}(E)$, we get the isomorphism
$A_G(x)^\wedge\cong\mathcal{A}(E)/\{\emptyset,
E\}=V_{\rho(x)}^{n+1}$.
\end{proof}

\subsection{}\label{prop-sc4}Let $(x,\phi)\in\tilde{\mathfrak{A}}_\Lg$. We have defined
$\rho(x)$. Let $\rho$ denote also the map
$A_{G}(x)^\wedge\rightarrow V_{\rho(x)}^{n+1}$.
\begin{thm}
The Springer correspondence $
\tilde{\gamma}:\tilde{\mathfrak{A}}_\Lg\rightarrow
{\mathbf{W}_n^\wedge}'\cong Y_{n,\text{even}}^{n+1}$ is given by
$$(x,\phi)\mapsto\rho(x)_{\rho(\phi)}.$$
\end{thm}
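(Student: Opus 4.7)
The plan is to mirror the proof of Theorem \ref{prop-sc3}, now for type $D$. By the discussion in \ref{ss1-3}---namely Shoji's observation that for $n \geq 3$ any nondegenerate irreducible character of $\mathbf{W}_n'$ is determined by its restriction to $\mathbf{W}_{n-1}'$---together with the restriction formula $(\mathbf{R})$, it will suffice to (i) handle the low-rank base cases, and (ii) verify that the assignment $(x,\phi) \mapsto \rho(x)_{\rho(\phi)}$ is compatible with restriction through a maximal parabolic $P \subset G = SO(2n)$ whose Levi $L \cong GL_1 \times SO(2n-2)$ has Weyl group $\{1\} \times \mathbf{W}_{n-1}'$. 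Since the source is $\tilde{\mathfrak{A}}_\Lg$ (orbits under $\tilde G / G$) and the target is ${\mathbf{W}_n^\wedge}'$ (orbits under $\mathbf{W}_n / \mathbf{W}_n'$), once the nondegenerate pairs are matched the degenerate ones are forced by counting.

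For base cases ($n \leq 2$) I would argue directly: $\mathbf{W}_0' = \mathbf{W}_1' = \{1\}$ is vacuous, and for $n = 2$ the verbatim analogue of Lemma \ref{lemma-1} (obtained from the same decomposition of $\varphi_!\bar{\bQ}_{lX}|_{\cN}$) pins down $(\rc_{reg},1)$ and $(\rc_0,1)$ as the unit and sign characters, with the remaining orbits of $\mathfrak{o}(4)$ matched by direct inspection.

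For the inductive step $n \geq 3$, I would fix $(x,\phi) \in \tilde{\mathfrak{A}}_\Lg$ and $(x',\phi') \in \tilde{\mathfrak{A}}_\Ll$ and set $\Lambda = \rho(x) \in Y_{n,\text{even}}^{n+1}$, $\Lambda' = \rho(x') \in Y_{n-1,\text{even}}^{n}$. Write $\Lambda \to \Lambda'$ if, for some choice of representatives $(A,B)$ of $\Lambda$ and $(A',B')$ of $\Lambda'$ with the same length, the pair $(A',B')$ is obtained from $(A,B)$ by decreasing exactly one of the entries $a_i$ or $b_i$ by an index-dependent amount prescribed by the shift from $r=n+1$ to $r'=n$, and decreasing every other entry by the corresponding smaller amount, subject to the positivity and gap conditions needed to land in $D_{n-1,\text{even}}^{n}$. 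A combinatorial check strictly parallel to the $X_n^{2,n+1}$ analysis in Theorem \ref{prop-sc3}, now modulo the $(A,B) \leftrightarrow (B,A)$ symmetry, shows that $\Lambda \to \Lambda'$ is exactly the relation describing the components of $\text{Res}^{\mathbf{W}_n'}_{\mathbf{W}_{n-1}'}$ between nondegenerate characters of $\mathbf{W}_n'$ and $\mathbf{W}_{n-1}'$. On the geometric side, Proposition \ref{prop-1} characterizes precisely the pairs of orbits $(\rc,\rc')$ for which $\tilde{S}_{x,x'} \neq \emptyset$---and the conditions are recognizably the same ones---while Proposition \ref{prop-gps} with its Corollaries describes $\tilde{H}_{x,x'}$ explicitly via $\tilde{A}_P$ and $\tilde{A}_P'$; the subset $S_{x,x'} \subset \tilde{S}_{x,x'}$ relevant for $(\mathbf{R})$ is then read off by the even-length-product recipe of \ref{ssec-sxx}.

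The main obstacle I expect is keeping track of the two $\bZ/2$-quotients that distinguish this case from the $O(2n+1)$ one. Because $s = 0$ there are no initial intervals; instead the bookkeeping has shifted into the quotient $V_\Lambda^{n+1} = \mathcal{A}(E)/\{\emptyset,E\}$, which encodes $A_G(x)^\wedge$ inside $A_{\tilde G}(x)^\wedge$, and in parallel into the passage $X_{n,0}^{n+1,0} \to Y_{n,0}^{n+1}$ via $(A,B) \leftrightarrow (B,A)$, which encodes the $\tilde G / G$ action on $\tilde{\mathfrak{A}}_\Lg$. Concretely, I would split into cases according to whether the unique ``active'' interval of $\Lambda$ touched by the restriction is preserved, split into two intervals of $\Lambda'$, or merged with a neighbor, and compare the generators of $\tilde{A}_P, \tilde{A}_P'$ from the Corollaries to Proposition \ref{prop-gps} with the allowed transitions $F \mapsto F'$. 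The identity to verify in the end is
\[
\bigl\{ (F,F') \in V_\Lambda^{n+1} \times V_{\Lambda'}^{n} \mid \Lambda_F \to \Lambda'_{F'}\bigr\} = \rho\bigl(\bigl\{ (\phi,\phi') \in A_G(x)^\wedge \times A_L(x')^\wedge \mid \la \phi\otimes\phi', \varepsilon_{x,x'}\ra \neq 0\bigr\}\bigr),
\]
and the two quotients just mentioned are exactly the book-keeping needed to pass from the $\tilde G, \tilde L$ picture computed in Propositions \ref{prop-1}--\ref{prop-gps} to the $G, L$ picture asserted here.
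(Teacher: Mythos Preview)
Your proposal is correct and follows essentially the same approach as the paper's own proof: reduce to compatibility with the restriction formula $(\mathbf{R})$, describe the relation $\Lambda\to\Lambda'$ combinatorially in $Y_{n,\text{even}}^{n+1}$, match it via Proposition \ref{prop-1} with the condition $\tilde{S}_{x,x'}\neq\emptyset$, and then use Proposition \ref{prop-gps} together with \ref{ssec-sxx} to verify the compatibility between $\{(F,F')\mid\Lambda_F\to\Lambda'_{F'}\}$ and $\{(\phi,\phi')\mid\la\phi\otimes\phi',\varepsilon_{x,x'}\ra\neq 0\}$. Your explicit attention to the two $\bZ/2$-quotients (the passage $\mathcal{A}(E)\to\mathcal{A}(E)/\{\emptyset,E\}$ and the $(A,B)\leftrightarrow(B,A)$ symmetry) is exactly the extra bookkeeping the paper leaves implicit when it says ``one verifies the compatibility \ldots\ as in the proof of Theorem \ref{prop-sc3}.''
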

\begin{proof} Again it is enough to prove the map $\tilde{\gamma}$ is compatible
with the restriction formula $(\mathbf{R})$. Note that
$Y_{n,d}^{n+1}=\emptyset$ if $d>0$ is even. Thus
$Y_{n,\text{even}}^{n+1}=Y_{n,0}^{n+1}$. Let $\{A,B\}\in
Y_{n,\text{even}}^{n+1}$ correspond to $\chi\in
{\mathbf{W}_n^{\wedge}}'$. The pairs $\{A',B'\}\in
Y_{n-1,\text{even}}^{n}$ which correspond to the components of the
restriction of $\chi$ to $\mathbf{W}_{n-1}'$ are those which can be
deduced from $\{A,B\}$ by decreasing one of the entries $a_{i}$ by
$i$ (or $b_{i}$ by $i$) and decreasing all other entries $a_j$ by
$j-1$, $b_j$ by $j-1$. We can decrease $a_{i}$ by $i$ (resp. $b_{i}$
by $i$) if and only if $i\geq 2$, $a_i-a_{i-1}\geq n+2$ or $i=1$,
$a_i\geq 1$ (resp. $i\geq 2$, $b_i-b_{i-1}\geq n+2$ or $i=1$,
$b_i\geq 1$). We write $\{A,B\}\to\{A',B'\}$ if they are related in
this way. Suppose that $\{A,B\}\to\{A',B'\}$. One can easily check
that if $\{A,B\}$ and $\{A',B'\}$ are similar to $\Lambda\in
D_{n,\text{even}}^{n+1}$ and $\Lambda'\in D_{n-1,\text{even}}^{n}$
respectively, then $\Lambda\to\Lambda'$.

Let $x\in\Lg$ nilpotent and $x'\in\Ll$ nilpotent. Then
$\tilde{S}_{x,x'}\neq\emptyset$ ($\Leftrightarrow
S_{x,x'}\neq\emptyset$) if and only if $x,x'$ are as in Proposition
\ref{prop-1} if and only if
$\Lambda=\rho_G(x)\to\Lambda'=\rho_L(x')$. Let $c_1\leq\cdots\leq
c_{2s}$ and $c_1'\leq\cdots\leq c_{2s}'$ correspond to $\Lambda$ and
$\Lambda'$ respectively. Then $A_{\tilde{G}}(x)$ is generated by
$\{\epsilon_i|c_i\neq c_j,\forall j\neq i\}$, $A_{\tilde{L}}(x')$ is
generated by $\{\epsilon_i'|c_i'\neq c_j',\forall j\neq i\}$ and
$\tilde{A}_P$ is generated by $\{\epsilon_i|c_i\neq c_j,c_i'\neq
c_j',\forall j\neq i\}$. The discussion in \ref{ssec-sxx} allows us
to compute $\varepsilon_{x,x'}$ and the set $\{(\phi,\phi')\in
A_G(x)^\wedge\times A_L(x')^\wedge|\la
\phi\otimes\phi',\varepsilon_{x,x'}\ra\neq 0\}$. One verifies the
compatibility with the set $\{(F,F')\in V_{\rho_G(x)}^{n+1}\times
V_{\rho_L(x')}^{n}|\Lambda_F\to\Lambda'_{F'}\}$ under the map $\rho$
as in the proof of Theorem \ref{prop-sc3}.
\end{proof}

\section{Springer correspondence for duals of symplectic and odd orthogonal Lie
algebras}\label{sec-d-c}
\subsection{}\label{ss-sym-d} We assume $G=Sp(2n)$ in this subsection and \ref{prop-sc1-d}.

Let
$\xi=(\lambda_{2s+1})_{\chi(\lambda_{2s+1})}\cdots(\lambda_1)_{\chi(\lambda_1)}\in\Lg^*$
be nilpotent (see \ref{comp-sym-d}), where $\lambda_1=0$. We have
$\lambda_{2j}=\lambda_{2j+1}$. We attach to the orbit $\rc$ of $\xi$
the sequence $c_1,\ldots,c_{2s+1}$ defined as follows:

(1) $c_{2j}=\lambda_{2j}-\chi(\lambda_{2j})+n+1+(j-1)(n+2)$,

(2) $c_{2j-1}=\chi(\lambda_{2j-1})+(j-1)(n+2)$.

Taking $a_i=c_{2i-1}$, $i=1,\ldots,s+1$, $b_i=c_{2i}$,
$i=1,\ldots,s$, we get a well-defined $(A,B)\in X_{n,1}^{1,n+1}$. We
denote it $\rho_G(\xi)$, $\rho(\xi)$ or $\rho(\rc)$.

\begin{lem}
$\mathrm{(i)}$ $\rc\mapsto\rho(\rc)$ defines a bijection from the
set of all nilpotent $Sp(2n)$-orbits in $\mathfrak{sp}(2n)^*$ to
$D_{n}^{1,n+1}$.

$\mathrm{(ii)}$ $A_G(\xi)^\wedge$ is isomorphic to
$V_{\rho(\xi)}^{1,n+1}$.
\end{lem}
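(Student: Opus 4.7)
The plan is to argue in direct parallel with the corresponding lemmas for $\mathfrak{sp}(2n)$ (Section \ref{sec-sy}) and $\mathfrak{o}(2n+1)$ (\ref{lem-com-oor}), adjusted to the parameters $r=1$, $s=n+1$ and to the modified constraints $\tfrac{\lambda_i-1}{2}\leq\chi(\lambda_i)\leq\lambda_i$ governing nilpotent orbits in $\mathfrak{sp}(2n)^*$ recalled in \ref{comp-sym-d}.

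For (i), first I would verify that $\rho(\rc)\in D_n^{1,n+1}$. The gap conditions $a_{j+1}-a_j\geq n+2$ and $b_{j+1}-b_j\geq n+2$ follow from the monotonicity of $\chi(\lambda_i)$ and of $\lambda_i-\chi(\lambda_i)$; $b_1\geq n+1$ follows from $\lambda_2-\chi(\lambda_2)\geq 0$. A direct calculation using $\chi(\lambda_1)=0$ and $\sum_{j=1}^m\lambda_{2j}=n$ (nonzero Jordan parts come in equal pairs) gives $\sum c_i=n+m(n+1)+m^2(n+2)$, matching the sum identity with $d=1$, $e=0$, $r=1$, $s=n+1$. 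The distinguished inequalities $a_1\leq b_1\leq a_2\leq\cdots\leq a_{m+1}$ reduce by direct substitution to $\chi(\lambda_{2j})+\chi(\lambda_{2j+1})\leq\lambda_{2j}+n+1$ and $\chi(\lambda_{2j+1})+\chi(\lambda_{2j+2})\leq\lambda_{2j+2}+n+1$, both following from $\chi(\lambda_i)\leq\lambda_i$ together with the bound $\lambda_{2m}\leq n$ (since the two largest blocks appear as an equal pair in a partition of $2n$). Injectivity is then immediate from the inversion formulas
\[
\lambda_{2j}=c_{2j}+c_{2j+1}-(n+1)-(2j-1)(n+2),\qquad \chi(\lambda_{2j+1})=c_{2j+1}-j(n+2),
\]
and surjectivity follows from the count: $|D_n^{1,n+1}|$ equals the number of nilpotent $Sp(2n)$-orbits in $\mathfrak{sp}(2n)^*$ computed in \cite{X2}.

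For (ii), I would match the generators of $A_G(\xi)$ listed in \ref{comp-sym-d} with the intervals of $\rho(\xi)$. The index $i$ labels a generator (i.e.\ $\chi(\lambda_i)\neq (\lambda_i-1)/2$) precisely when $c_i\in S:=(A\cup B)\setminus(A\cap B)$, and the explicit gap $c_{2j+1}-c_{2j}=\chi(\lambda_{2j+1})+\chi(\lambda_{2j})-\lambda_{2j}+1$ (together with the analogous expression for $c_{2j}-c_{2j-1}$) shows that $c_i$ and $c_{i+1}$ lie in a common interval of $\rho(\xi)$ iff $\chi(\lambda_i)+\chi(\lambda_{i+1})\geq\lambda_{i+1}$, exactly relation (r2). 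Relation (r3), $a_1=1$ since $\lambda_1=0$, matches the fact that $c_1=0<n+1=s$, so the interval containing $c_1$ is the initial interval and its class $\sigma_I$ is trivialized. Hence $A_G(\xi)$ becomes the $\tF_2$-vector space with basis $\{\sigma_I\mid I \text{ a proper interval of }\rho(\xi)\}$, and the desired isomorphism $V_{\rho(\xi)}^{1,n+1}=\mathcal{A}(E)\cong A_G(\xi)^\wedge$ sends $F$ to the character assigning value $-1$ to $\sigma_I$ iff $I\in F$, exactly as in the proof of the Lemma in \ref{lem-com-oor}. The main obstacle will be the bookkeeping in verifying that the two alternating gap formulas correctly encode relation (r2) and in handling edge cases (when $c_i=c_{i+1}$ so neither lies in $S$, or when $i=1$ is involved with the initial interval); these follow the same pattern as the orthogonal cases treated in Section \ref{sec-scor}.
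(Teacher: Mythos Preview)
Your approach is correct and mirrors the paper's own proof almost exactly: check $\rho(\rc)\in D_n^{1,n+1}$, observe injectivity via the inversion formulas, deduce bijectivity from the orbit count in \cite{X2}, and for (ii) identify generators $a_i$ with $c_i\in S$, relation (r2) with the interval condition, and relation (r3) with the initial interval.

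One small slip in your bookkeeping for (i): the two distinguished inequalities are not both upper bounds on sums of $\chi$-values. The condition $a_j\leq b_j$ does reduce to $\chi(\lambda_{2j-1})+\chi(\lambda_{2j})\leq\lambda_{2j}+n+1$, which follows from $\chi(\lambda_i)\leq\lambda_i$ and $\lambda_i\leq n$ as you say. But $b_j\leq a_{j+1}$ reduces instead to the \emph{lower} bound $\chi(\lambda_{2j})+\chi(\lambda_{2j+1})\geq\lambda_{2j}-1$, which follows from the constraint $\chi(\lambda_i)\geq(\lambda_i-1)/2$ in \ref{comp-sym-d}, not from the upper bound. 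This is a harmless transcription error; the conclusion stands.
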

\begin{proof}
(i) It is easily checked from the definition  that $\rho(\rc)\in
D_{n}^{1,n+1}$ and the map $\rc\mapsto\rho(\rc)$ is injective. Note
that $D_{n}^{1,n+1}$ is in bijection with the set $\Delta$
consisting of all pairs of partitions $(\mu,\nu)$ such that
$\sum\mu_i+\sum\nu_i=n,\nu_{i}\leq\mu_i+1$. Since the number of
nilpotent orbits is equal to $|\Delta|$ by \cite{X2}, the
bijectivity of the map follows. In fact, given $(A,B)\in
D_{n}^{1,n+1}$, the corresponding nilpotent orbit can be obtained as
follows. Let $c_1\leq c_2\leq\cdots\leq c_{2s+1}$ be the sequence
$a_1\leq b_1\leq\cdots\leq a_{s+1}$. Then
$\lambda_{2j}=\lambda_{2j+1}=c_{2j}+c_{2j+1}-(2j-1)(n+2)-(n+1)$ and
$\chi(\lambda_{2j})=\chi(\lambda_{2j+1})=c_{2j+1}-j(n+2)$,
$j=1,\ldots,s$, $\lambda_1=0$.

(ii) The component group $A_G(\xi)$ is described in
\ref{comp-sym-d}. Let $(A,B)=\rho(\xi)$ and $c_1,\ldots,c_{2s+1}$ be
as above. Let $S=(A\cup B)\backslash (A\cap B)$. Then
$\chi(\lambda_i)\neq(\lambda_i-1)/2$ if and only if $c_i\in S$. The
relation (r2) of \ref{comp-sym-d} says that if $c_i,c_j$ belong to
the same interval of $(A,B)$, then $\epsilon_i,\epsilon_j$ have the
same images in $A_G(\xi)$. Thus we get an element $\sigma_I$ of
$A_G(\xi)$ for each interval $I$ of $(A,B)$ and $\sigma_I^2=1$.
Moreover (r3) means that $\sigma_I=1$ if $I$ is the initial
interval.

The isomorphism $V_{\rho(\xi)}^{1,n+1}\rightarrow A_G(\xi)^\wedge$
is given as follows. We associate to $F$ the character of $A_G(\xi)$
which takes value $-1$ on $\sigma_I$ if and only if $I\in F$.
\end{proof}

\subsection{}\label{prop-sc1-d}Let $(\xi,\phi)\in\mathfrak{A}_{\Lg^*}$. We have defined
$\rho(\xi)$. Let $\rho$ denote also the map
$A_G(\xi)^\wedge\rightarrow V_{\rho(\xi)}^{1,n+1}$.

\begin{thm}
The Springer correspondence $\gamma':\mathfrak{A}_{\Lg^*}\rightarrow
\mathbf{W}_n^\wedge\cong X_n^{1,n+1}$ is given by
$$(\xi,\phi)\mapsto\rho(\xi)_{\rho(\phi)}.$$
\end{thm}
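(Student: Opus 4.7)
The plan is to follow the same scheme used in the proofs of Theorems~\ref{thm-sc-sp}, \ref{prop-sc3}, and \ref{prop-sc4}. By Shoji's observation recalled in \ref{ss1-3}, for $n\geq 3$ an irreducible character of $\mathbf{W}_n$ is determined by its restriction to $\mathbf{W}_{n-1}$, so it suffices to verify that the assignment $(\xi,\phi)\mapsto\rho(\xi)_{\rho(\phi)}$ is compatible with the restriction formula $(\mathbf{R}')$.

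First I would dispose of the small cases $n=1,2$. An analogue of Lemma~\ref{lemma-1} for $\Lg^*$, obtained by repeating its proof using the results of \cite{X2}, shows that the regular nilpotent orbit paired with the trivial local system corresponds to the trivial character of $\mathbf{W}_n$ and the zero orbit corresponds to the sign character. A direct computation from \ref{ss-sym-d} confirms that $\rho$ sends these two orbits to the trivial and sign elements of $X_n^{1,n+1}$. This pins down the correspondence for $n\leq 2$, since then $\mathfrak{A}_{\Lg^*}$ has very few pairs beyond these extremal ones and they can be matched individually as in the proof of Theorem~\ref{thm-sc-sp}.

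For $n\geq 3$, I introduce a relation $\Lambda\to\Lambda'$ between $\Lambda\in X_{n,1}^{1,n+1}$ and $\Lambda'\in X_{n-1,1}^{1,n}$ analogous to the one in the proof of Theorem~\ref{prop-sc3}: $\Lambda'$ is obtained from $\Lambda$ by decreasing a single entry $a_i$ or $b_i$ by one extra unit beyond the canonical shift relating $X_{n,1}^{1,n+1}$ to $X_{n-1,1}^{1,n}$, subject to the gap conditions that ensure the result lies in $X_{n-1,1}^{1,n}$. A standard branching calculation for irreducible characters of the hyperoctahedral group shows that the multiplicity-free decomposition of $\text{Res}^{\mathbf{W}_n}_{\mathbf{W}_{n-1}}\chi$ is indexed exactly by the $\Lambda'$ with $\Lambda\to\Lambda'$, and this relation descends to similarity classes. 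The task is then to verify, for each nilpotent $\xi\in\Lg^*$ and $\xi'\in\Ll^*$, that
\begin{equation*}
\{(F,F')\in V_{\rho(\xi)}^{1,n+1}\times V_{\rho(\xi')}^{1,n}\mid \rho(\xi)_F\to\rho(\xi')_{F'}\}
\end{equation*}
coincides with the image under $\rho$ of
\begin{equation*}
\{(\phi,\phi')\in A_G(\xi)^\wedge\times A_L(\xi')^\wedge\mid\la\phi\otimes\phi',\varepsilon_{\xi,\xi'}\ra\neq 0\}.
\end{equation*}
Here Proposition~\ref{prop-dsp} identifies $S_{\xi,\xi'}\neq\emptyset$ with $\rho_G(\xi)\to\rho_L(\xi')$, while Proposition~\ref{prop-gps-d1} and its corollaries describe the subgroup $H_{\xi,\xi'}\subset A_G(\xi)\times A_L(\xi')$ characterizing $\varepsilon_{\xi,\xi'}$ in terms of generators $a_i,a_i'$ that match the interval generators $\sigma_I,\sigma'_{I'}$ of $A_G(\xi)^\wedge\cong V_{\rho(\xi)}^{1,n+1}$ and $A_L(\xi')^\wedge\cong V_{\rho(\xi')}^{1,n}$.

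The main obstacle, as in the proof of Theorem~\ref{prop-sc3}, is the bookkeeping required to handle each sub-case of Proposition~\ref{prop-dsp}: whether the interval of $\rho(\xi)$ containing the modified entry is the initial interval or a proper one; whether $\chi'(\lambda_i')=\chi(\lambda_i)$ or $\chi(\lambda_i)-1$; and whether the first (resp. second) corollary's exceptional case $A(\xi)=D\times A_P$ (resp. $A_P'=\{1,a'_{i+1}a'_{i+2}\}$) applies. In each such sub-case one identifies which proper intervals of $\rho(\xi)$ and $\rho(\xi')$ are paired and matches the resulting constraint $\sigma_I\sigma'_{I'}\in H_{\xi,\xi'}$ with the combinatorial condition $\rho(\xi)_F\to\rho(\xi')_{F'}$. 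This analysis is parallel to, and in fact simpler than, the one carried out for Theorem~\ref{prop-sc3} because $\Lg^*$ carries no defect summand analogous to the $D(\lambda_{k+1})$ block arising in the orthogonal case, so each proper interval of $\rho(\xi)$ is determined by a pair of adjacent entries and the initial interval is easier to isolate.
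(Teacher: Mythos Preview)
Your proposal is correct and follows essentially the same approach as the paper: reduce to compatibility with the restriction formula $(\mathbf{R}')$, define the relation $\Lambda\to\Lambda'$ on $X_{n,1}^{1,n+1}\times X_{n-1,1}^{1,n}$, identify $S_{\xi,\xi'}\neq\emptyset$ with $\rho_G(\xi)\to\rho_L(\xi')$ via Proposition~\ref{prop-dsp}, and then match the interval combinatorics with the description of $H_{\xi,\xi'}$ coming from Proposition~\ref{prop-gps-d1} exactly as in the proof of Theorem~\ref{prop-sc3}. The paper's proof is in fact more terse than yours---it simply says the verification is ``entirely similar to the proof of Theorem~\ref{prop-sc3}''---but your elaboration of the sub-cases and your remark that the absence of a defect block makes the analysis simpler are accurate and in the spirit of the argument.
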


\begin{proof} By similar argument as in the proof of Theorem \ref{thm-sc-sp},
 it is enough to prove the map $\gamma'$ is compatible
with the restriction formula $(\mathbf{R}')$. Note that
$X_{n,d}^{1,n+1}=\emptyset$ if $d\neq 1$ is odd. Thus
$X_{n}^{1,n+1}=X_{n,1}^{1,n+1}$. Let $(A,B)\in X_{n}^{1,n+1}$
correspond to $\chi\in \mathbf{W}_n^\wedge$. The pairs $(A',B')\in
X_{n-1}^{1,n}$ which correspond to the components of the restriction
of $\chi$ to $\mathbf{W}_{n-1}$ are those which can be deduced from
$(A,B)$ by decreasing one of the entries $a_{i}$ by $i$ (or $b_{i}$
by $i+1$) and decreasing all other entries $a_j$ by $j-1$, $b_j$ by
$j$. We can decrease $a_{i}$ by $i$ (resp. $b_{i}$ by $i+1$) if and
only if $i\geq 2$, $a_i-a_{i-1}\geq n+3$ or $i=1$, $a_i\geq 1$
(resp. $i\geq2$, $b_i-b_{i-1}\geq n+3$ or $i=1$, $b_i\geq n+2$). We
write $(A,B)\to(A',B')$ if they are related in this way. Suppose
that $(A,B)\to(A',B')$. One can easily check that if $(A,B)$ and
$(A',B')$ are similar to $\Lambda\in D_n^{1,n+1}$ and $\Lambda'\in
D_{n-1}^{1,n}$ respectively, then $\Lambda\to\Lambda'$.

Let $\xi\in\Lg^*$ and $\xi'\in\Ll^*$ be nilpotent. Then
$S_{\xi,\xi'}\neq\emptyset$ if and only if $\xi,\xi'$ are as in
Proposition \ref{prop-dsp} if and only if
$\Lambda=\rho_G(\xi)\to\Lambda'=\rho_L(\xi')$. The verification of
compatibility with restriction formula is entirely similar to the
proof of Theorem \ref{prop-sc3}.
\end{proof}

\subsection{}\label{lem-com-dor}We assume $G=O(2n+1)$ in this subsection and \ref{prop-sc1}.

Let
$\xi=(m;(\lambda_{2s})_{\chi(\lambda_{2s})}\cdots(\lambda_1)_{\chi(\lambda_1)})\in\Lg^*$
be nilpotent (see \ref{ssec-dor}). We have
$\lambda_{2j-1}=\lambda_{2j}$. We attach to the orbit $\rc$ of $\xi$
the sequence $c_1,\ldots,c_{2s+1}$ defined as follows:

(1) $c_{2j}=\chi(\lambda_{2j})+(j-1)(n+1)$, $j=1,\ldots,s$

(2) $c_{2j-1}=\lambda_{2j-1}-\chi(\lambda_{2j-1})+(j-1)(n+1)$,
$j=1,\ldots,s$,

(3) $c_{2s+1}=m+s(n+1)$.

Taking $a_i=c_{2i-1}$, $i=1,\ldots,s+1$, $b_i=c_{2i}$,
$i=1,\ldots,s$, we get a well defined element $\{A,B\}\in
Y_{n,1}^{n+1}$. We denote it $\rho_G(\xi)$, $\rho(\xi)$ or
$\rho(\rc)$.

\begin{lem}
$\mathrm{(i)}$ $\rc\mapsto\rho(\rc)$ defines a bijection from the
set of all nilpotent $O(2n+1)$-orbits in $\mathfrak{o}(2n+1)^*$ to
$D_{n,\text{odd}}^{n+1}$.

$\mathrm{(ii)}$ $A_G(\xi)^\wedge$ is isomorphic to
$V_{\rho(\xi)}^{n+1}$.
\end{lem}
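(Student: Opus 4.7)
The plan is to follow exactly the template of Lemma \ref{lem-com-dor} (for $\mathfrak{sp}(2n)^*$) and Lemma \ref{lem-com-oor} (for $\mathfrak{o}(2n+1)$), adapting to the $Y$-combinatorics appropriate for the dual of the odd orthogonal Lie algebra.

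For part (i), I would first verify that $\rho(\rc) = \{A,B\}$ actually belongs to $Y_{n,1}^{n+1}$, working with the representative $(A,B)\in \tilde X_{n,1}^{n+1,0}$ with $|A|=s+1,\ |B|=s$. The spacing conditions $a_{i+1}-a_i\ge n+1$ and $b_{i+1}-b_i\ge n+1$ reduce, via the formulas in \ref{lem-com-dor}, to the monotonicity in (d3) ($\chi(\lambda_i)$ and $\lambda_i-\chi(\lambda_i)$ are weakly increasing), with the top spacing $a_{s+1}-a_s=(m-(\lambda_{2s-1}-\chi(\lambda_{2s-1})))+(n+1)$ being controlled by (d4). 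The sum identity $\sum a_i+\sum b_i = n+(n+1)s^2$ follows from the elementary computation together with the relation $m+\sum_{i=1}^s\lambda_{2i-1}=n$ coming from $\sum \lambda_i=2n-2m$ and $\lambda_{2j-1}=\lambda_{2j}$. Distinguishedness $a_1\le b_1\le\cdots\le a_s\le b_s\le a_{s+1}$ amounts to $\lambda_{2j-1}-\chi(\lambda_{2j-1})\le \chi(\lambda_{2j})$ (equivalent to $\chi(\lambda_{2j})\ge\lambda_{2j}/2$, from (d3)), $\chi(\lambda_{2j})\le\lambda_{2j+1}-\chi(\lambda_{2j+1})+(n+1)$ (from monotonicity), and $\chi(\lambda_{2s})\le m+(n+1)$ (from $\chi(\lambda_{2s})\le\lambda_{2s}\le n-m$).

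For the bijectivity, I would write down the explicit inverse: given $\{A,B\}\in D_{n,\text{odd}}^{n+1}$, reorder as $c_1\le\cdots\le c_{2s+1}$, and set $m=c_{2s+1}-s(n+1)$, $\lambda_{2j-1}=\lambda_{2j}=c_{2j-1}+c_{2j}-2(j-1)(n+1)$, $\chi(\lambda_{2j-1})=\chi(\lambda_{2j})=c_{2j}-(j-1)(n+1)$. One checks that these values satisfy (d1)–(d4) and that the map is inverse to $\rho$ on its image. Bijectivity is then completed by matching cardinalities against the number of nilpotent $O(2n+1)$-orbits in $\mathfrak{o}(2n+1)^*$ computed in \cite{X2}.

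For part (ii), I would mirror the argument of Lemma \ref{lem-com-dor}(ii). Let $S=(A\cup B)\setminus(A\cap B)$. From the explicit formulas, $c_i\in S$ for $1\le i\le 2s$ exactly when $\chi(\lambda_i)\ne \lambda_i/2$, i.e., exactly when $a_i$ is a generator of $A_G(\xi)$; moreover $c_{2s+1}$ always lies in $S$. Relation (r2) identifies $a_i$ with $a_{i+1}$ precisely when $c_i$ and $c_{i+1}$ lie in the same interval: for $i=2j-1$ this is automatic whenever both $c_{2j-1},c_{2j}$ are in $S$, since then $c_{2j}-c_{2j-1}=2\chi(\lambda_{2j})-\lambda_{2j}\le \lambda_{2j}\le n<n+1$; for $i=2j$ the condition $c_{2j+1}-c_{2j}<n+1$ is equivalent to $\chi(\lambda_{2j})+\chi(\lambda_{2j+1})>\lambda_{2j+1}$. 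Thus each interval of $\{A,B\}$ containing some $c_i$ with $i\le 2s$ contributes one generator $\sigma_I$ of $A_G(\xi)$, with $\sigma_I^2=1$. Relation (r3) corresponds to the interval containing $c_{2s+1}$, which plays in the $Y$-setup the role of the initial interval in the $X$-setup of Lemma \ref{lem-com-oor}: when $\chi(\lambda_{2s})\ge m$, this interval absorbs the interval of $c_{2s}$ and the corresponding generator is forced to be trivial, which precisely matches quotienting $\mathcal{A}(E)$ by $\{\emptyset,E\}$ in the definition of $V^{n+1}_{\rho(\xi)}$. Assembling these identifications produces the required isomorphism $A_G(\xi)^\wedge\cong V_{\rho(\xi)}^{n+1}$, where a character is sent to the subset $F\subset E$ of intervals on which it takes the value $-1$.

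The main technical obstacle is the bookkeeping around $c_{2s+1}$ and (r3): one must verify that the ``last'' interval (containing $c_{2s+1}$) always plays the role of a distinguished/initial interval in the $Y$-combinatorics, and that this is precisely compatible with (r3) for all the various cases according to whether $c_{2s+1}$ lies in a singleton interval or is joined with the interval of $c_{2s}$. Once this bookkeeping is settled, matching the dimensions and the explicit generators–intervals correspondence is routine, essentially as in the proofs of Lemmas \ref{lem-com-oor} and \ref{lem-com-dor}.
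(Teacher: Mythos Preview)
Your proposal is correct and follows essentially the same approach as the paper's proof: verify $\rho(\rc)\in D_{n,\text{odd}}^{n+1}$, write down the explicit inverse, and conclude bijectivity by the orbit count from \cite{X2}; then match generators of $A_G(\xi)$ with elements of $S$, relation (r2) with interval-equivalence, and relation (r3) with the interval $I_0$ containing $c_{2s+1}$. The only point the paper makes more explicit than you do is the final isomorphism in (ii): it fixes, for each $F\in V_{\rho(\xi)}^{n+1}=\mathcal{A}(E)/\{\emptyset,E\}$, the unique lift $\tilde F\in\mathcal{A}(E)$ with $I_0\notin\tilde F$, and sends $F$ to the character taking value $-1$ on $\sigma_I$ exactly for $I\in\tilde F$; this is precisely the bookkeeping you flag as the ``main technical obstacle''.
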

\begin{proof}
(i) It is easily checked from the definition  that $\rho(\rc)\in
D_{n,\text{odd}}^{n+1}$ and the map $\rc\mapsto\rho(\rc)$ is
injective. Note that $D_{n,\text{odd}}^{n+1}$ is in bijection with
the set $\Delta$ consisting of all pairs of partitions $(\mu,\nu)$
such that $\sum\mu_i+\sum\nu_i=n,\mu_{i+1}\leq\nu_i$. Since the
number of nilpotent orbits is equal to $|\Delta|$ by \cite{X2}, the
bijectivity of the map follows. In fact, given $\{A,B\}\in
D_{n,\text{odd}}^{n+1}$ with inverse image $(A,B)\in
D_{n,1}^{n+1,0}$, the corresponding nilpotent orbit can be obtained
as follows. Let $c_1\leq c_2\leq\cdots\leq c_{2s+1}$ be the sequence
$a_1\leq b_1\leq\cdots\leq a_{s+1}$. Then
$\lambda_{2j}=\lambda_{2j-1}=c_{2j}+c_{2j-1}-(2j-2)(n+1)$,
$\chi(\lambda_{2j})=\chi(\lambda_{2j-1})=c_{2j}-(j-1)(n+1)$,
$j=1,\ldots,s$ and $m=c_{2s+1}-s(n+1)$. The corresponding orbit is
$(m;(\lambda_{2s})_{\chi(\lambda_{2s})}\cdots(\lambda_1)_{\chi(\lambda_1)})$.

(ii) The component group $A_G(\xi)$ is described in \ref{ssec-dor}.
Let $\{A,B\}=\rho(\xi)$, $(A,B)$ and $c_1,\ldots,c_{2s+1}$ be as
above. Let $S=(A\cup B)\backslash (A\cap B)$. Note that
$c_{2s}<c_{2s+1}$, thus $c_{2s+1}\in S$. For $i=1,\ldots,2s$,
$\chi(\lambda_i)\neq\lambda_i/2$ if and only if $c_i\in S$. The
relation (r2) of \ref{ssec-dor} says that for $1\leq i<j\leq2s$, if
$c_i,c_j$ belong to the same interval of $\{A,B\}$, then
$\epsilon_i,\epsilon_j$ have the same images in $A_G(\xi)$. Let
$I_0$ be the interval containing $c_{2s+1}$. The relation (r3) says
that $\epsilon_i=1$ if $c_i\in I_0$. Thus we get an element
$\sigma_I$ of $A_G(\xi)$ for each interval $I\neq I_0$ of $\{A,B\}$
and $\sigma_I^2=1$.

The isomorphism $V_{\rho(\xi)}^{n+1}\rightarrow A_G(\xi)^\wedge$ is
given as follows. Let $F\in
V_{\rho(\xi)}^{n+1}=\mathcal{A}(E)/\{\emptyset,E\}$ and $\tilde{F}$
the inverse image of $F$ in $\mathcal{A}(E)$ that does not contain
$I_0$. We associate to $F$ the character of $A_G(\xi)$ which takes
value $-1$ on $\sigma_I$ if and only if $I\in \tilde{F}$.
\end{proof}

\subsection{}\label{prop-sc1}Let $(\xi,\phi)\in\mathfrak{A}_{\Lg^*}$. We have defined
$\rho(\xi)$. Let $\rho$ denote also the map
$A_G(\xi)^\wedge\rightarrow V_{\rho(\xi)}^{n+1}$.

\begin{thm}
The Springer correspondence $\gamma':\mathfrak{A}_{\Lg^*}\rightarrow
\mathbf{W}_n^\wedge\cong Y_{n,\text{odd}}^{n+1}$ is given by
$$(\xi,\phi)\mapsto\rho(\xi)_{\rho(\phi)}.$$
\end{thm}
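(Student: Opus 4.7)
The plan is to mimic the proof of Theorem \ref{prop-sc4} (the $SO(2n)$ case), since the combinatorial target $Y_{n,\text{odd}}^{n+1}$ and the data $\rho(\xi)$ built from $(m;\lambda,\chi)$ with the distinguished last entry $c_{2s+1}=m+s(n+1)$ are structurally parallel to that case. As in the other three theorems, the map is determined once we check two things: (i) that it is consistent with the restriction formula $(\mathbf{R}')$ applied to a maximal parabolic of the type set up in \ref{ss1-2}, and (ii) that this consistency, together with the bijectivity statement (part (i) of Lemma \ref{lem-com-dor}), pins down $\gamma'$ uniquely by induction on $n$, invoking Shoji's observation recalled in \ref{ss1-3}.

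First I would identify the combinatorial restriction rule. Since $Y_{n,d}^{n+1}=\emptyset$ for odd $d>1$, we have $Y_{n,\text{odd}}^{n+1}=Y_{n,1}^{n+1}$. Given $\{A,B\}\in Y_{n,1}^{n+1}$, the pairs $\{A',B'\}\in Y_{n-1,1}^{n+1}$ that can appear in the restriction $\mathrm{Res}^{\mathbf{W}_n}_{\mathbf{W}_{n-1}}\chi$ are those obtained by decreasing one entry $a_i$ by $i$ (or $b_i$ by $i$) and all other $a_j$ (resp.\ $b_j$) by $j-1$, subject to the familiar gap conditions ($a_i-a_{i-1}\geq n+2$, etc.), with the additional possibility that the \emph{last} entry $c_{2s+1}=m+s(n+1)$ may drop by $s$ (this corresponds to the new case (a) of Proposition \ref{prop-1-d}, where only $m$ decreases). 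I would write $\{A,B\}\to\{A',B'\}$ for this relation and, as in the earlier proofs, check that this descends to distinguished representatives $\Lambda\in D_{n,\text{odd}}^{n+1}$, $\Lambda'\in D_{n-1,\text{odd}}^{n+1}$.

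Next I would match the geometric side. Using Proposition \ref{prop-1-d}, $S_{\xi,\xi'}\neq\emptyset$ precisely when $(\xi,\xi')$ falls under its case (a) or (b), and a direct comparison with the definition of $\rho$ in \ref{lem-com-dor} shows that these are exactly the cases where $\Lambda=\rho_G(\xi)\to\Lambda'=\rho_L(\xi')$. Case (a) (the shift of $m$) produces the jump $a_{s+1}\mapsto a_{s+1}-s$ in the last coordinate, and case (b) produces the coalescence/split pattern already seen in the orthogonal and symplectic arguments. In particular $\emptyset\neq S_{\xi,\xi'}\Leftrightarrow\rho(\xi)\to\rho(\xi')$.

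The main work, and the main obstacle, is verifying that the two-sided set
\begin{equation*}
\{(\phi,\phi')\in A_G(\xi)^\wedge\times A_L(\xi')^\wedge\mid\langle\phi\otimes\phi',\varepsilon_{\xi,\xi'}\rangle\neq 0\}
\end{equation*}
maps, under $\rho$, bijectively onto
\begin{equation*}
\{(F,F')\in V_{\rho_G(\xi)}^{n+1}\times V_{\rho_L(\xi')}^{n+1}\mid\Lambda_F\to\Lambda'_{F'}\}.
\end{equation*}
Using \ref{ssec-gps-d}, the representation $\varepsilon_{\xi,\xi'}$ is computed from the triple $(A_P,A_P',h)$, and Proposition \ref{prop-gps-d2} together with its two corollaries gives an explicit description of $A_P$, $A_P'$ and $h$ in terms of the generators $a_i$. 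I would split into cases according to which interval $I$ of $\Lambda$ and which intervals $I',J'$ (or single $I'$) of $\Lambda'$ are affected by the move, paying particular attention to: the case where $I$ is the terminal interval containing $c_{2s+1}$ (which under (r3) is set to 1 in $A_G(\xi)$ but not in $A_L(\xi')$, since $\xi'$ lies in $\Ll^*$ of an $O(2n-1)\times\mathrm{torus}$-type Levi), case (a) of Proposition \ref{prop-1-d} which corresponds to hitting this terminal interval, and the subtle case in Corollary after Proposition \ref{prop-gps-d2} where $\mathbf{Y}$ becomes reducible. In each case the interval bookkeeping gives a perfect match, exactly as in the proof of Theorem \ref{prop-sc3}; the novelty is the role of the terminal interval $I_0$ containing $c_{2s+1}$ from the isomorphism $V_{\rho(\xi)}^{n+1}=\mathcal{A}(E)/\{\emptyset,E\}$.

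Finally, the base cases $n=1,2$ are handled directly: there are only finitely many nilpotent orbits and the correspondence is pinned down by Lemma \ref{lemma-1} applied to $\Lg^*$ (the regular and zero orbits must hit the unit and sign characters respectively), together with a direct tabulation. For $n\geq 3$ Shoji's observation promotes the restriction-compatibility established above into the desired identification of $\gamma'$ with $(\xi,\phi)\mapsto\rho(\xi)_{\rho(\phi)}$.
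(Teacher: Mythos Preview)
Your approach is correct and essentially identical to the paper's: reduce to compatibility with the restriction formula $(\mathbf{R}')$, identify the combinatorial rule $\{A,B\}\to\{A',B'\}$, match it against Proposition \ref{prop-1-d} (both cases (a) and (b)), and verify the interval bookkeeping via Proposition \ref{prop-gps-d2} and its corollaries, with the terminal interval $I_0$ containing $c_{2s+1}$ playing the role of the ``initial'' interval in the $X$-picture. Two small slips to correct: the restriction lands in $Y_{n-1,\text{odd}}^{n}$ (and $D_{n-1,\text{odd}}^{n}$, $V_{\rho_L(\xi')}^{n}$), not with superscript $n+1$, since the Levi has rank $n-1$; and under the uniform rule the last entry $a_{s+1}$ drops by $s+1$, not $s$, which is exactly what case (a) of Proposition \ref{prop-1-d} produces.
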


\begin{proof} Again it is enough to prove the map $\gamma'$ is compatible
with the restriction formula $(\mathbf{R}')$. Note that
$Y_{n,d}^{n+1}=\emptyset$ if $d\neq 1$ is odd. Thus
$Y_{n,\text{odd}}^{n+1}=Y_{n,1}^{n+1}$. Let $\{A,B\}\in
Y_{n,\text{odd}}^{n+1}$ with inverse image $(A,B)\in
X_{n,1}^{n+1,0}$ correspond to $\chi\in \mathbf{W}_n^\wedge$. The
pairs $\{A',B'\}\in Y_{n-1,\text{odd}}^{n}$ with inverse images
$(A',B')\in X_{n-1,1}^{n,0}$ which correspond to the components of
the restriction of $\chi$ to $\mathbf{W}_{n-1}$ are those which can
be deduced from $(A,B)$ by decreasing one of the entries $a_{i}$ by
$i$ (or $b_{i}$ by $i$) and decreasing all other entries $a_j$ by
$j-1$, $b_j$ by $j-1$. We can decrease $a_{i}$ by $i$ (resp. $b_{i}$
by $i$) if and only if $i\geq 2$, $a_i-a_{i-1}\geq n+2$ or $i=1$,
$a_i\geq 1$ (resp. $i\geq2$, $b_i-b_{i-1}\geq n+2$ or $i=1$,
$b_i\geq 1$). We write $\{A,B\}\to\{A',B'\}$ if they are related in
this way. Suppose that $\{A,B\}\to\{A',B'\}$. One can easily check
that if $\{A,B\}$ and $\{A',B'\}$ are similar to $\Lambda\in
D_{n,\text{odd}}^{n+1}$ and $\Lambda'\in D_{n-1,\text{odd}}^{n}$
respectively, then $\Lambda\to\Lambda'$.

Let $\xi\in\Lg^*$ and $\xi'\in\Ll^*$ be nilpotent. Then
$S_{\xi,\xi'}\neq\emptyset$ if and only if $\xi,\xi'$ are as in
Proposition \ref{prop-1-d} if and only if
$\Lambda=\rho_G(\xi)\to\Lambda'=\rho_L(\xi')$. To verify the map is
compatible with the restriction formula, it is enough to show that
the set \begin{equation}\label{eqn-10}\{(F,F')\in
V_{\rho_G(\xi)}^{n+1}\times
V_{\rho_L(\xi')}^{n}|\Lambda_F\to\Lambda'_{F'}\}\end{equation} is
the image of the set \begin{equation}\label{eqn-11}
\{(\phi,\phi')\in A_G(\xi)^\wedge\times A_L(\xi')^\wedge|\la
\phi\otimes\phi',\varepsilon_{\xi,\xi'}\ra\neq0\}\end{equation}
under the map $\rho$.

Let $c_1\leq\cdots\leq c_{2s+1}$ and $c_1'\leq\cdots\leq c_{2s+1}'$
correspond to the pre-image $(A,B)$ and $(A',B')$ of $\Lambda$ and
$\Lambda'$ in $D_{n,1}^{n+1,0}$ and $D_{n-1,1}^{n,0}$ respectively.
Then $A_G(\xi)$ is generated by $\{\epsilon_i|c_i\neq c_j,\forall
j\neq i\}$, $A_L(\xi')$ is generated by $\{\epsilon_i'|c_i'\neq
c_j',\forall j\neq i\}$ and $A_P$ is generated by
$\{\epsilon_i|c_i\neq c_j,c_i'\neq c_j',\forall j\neq i\}$. There
are various cases to consider. We describe one of the cases in the
following and the other cases are similar.

Assume $k\geq 1$, $c_{2k}>c_{2k-1}+1$, $c_{2k+1}=c_{2k}+n$ and
$c_{2k}'=c_{2k}-k$, $c_{2i}'=c_{2i}-(i-1)$,$i\neq k$,
$c_{2i+1}'=c_{2i+1}-i$, $i=1,\ldots,s$. Let $I$ (resp. $I'$) be the
interval of $\Lambda$ (resp. $\Lambda'$) containing $c_{2k+1}$
(resp. $c_{2k+1}'$) and $J'$ the interval of $\Lambda'$ containing
$c_{2k}'$. Note that
$c_{2j}-c_{2j-1}=2\chi(\lambda_{2j})-\lambda_{2j}<n$ and
$c_{2j+1}-c_{2j}=n+1+\lambda_{2j+1}-\chi(\lambda_{2j+1})-\chi(\lambda_{2j})\geq
2$, except if $m=0$, $\chi(\lambda_{2s})=\lambda_{2s}=n$. In the
latter case, $\xi'$ correspond to $m'=0$,
$\chi'(\lambda_{2s}')=\lambda_{2s}'=n-1$ and $A_G(\xi)=A_L(\xi')=1$.
Hence all other intervals of $\Lambda$ and $\Lambda'$ can be
identified naturally. Let $I_0$ (resp. $I_0'$) be the interval of
$\Lambda$ (resp. $\Lambda'$) containing $c_{2s+1}$ (resp.
$c_{2s+1}'$) There are two possibilities:

(i) $I\neq I_0$. Let $\tilde{F}$ (resp. $\tilde{F}'$) be the
pre-image of $F$ (resp. $F'$) in $\mathcal{A}(E)$ (resp.
$\mathcal{A}(E')$) that does not contain $I_0$ (resp. $I_0'$). Then
$\Lambda_F\to\Lambda_{F'}$ if and only if
\begin{itemize}
\item[(a)]$\tilde{F}\backslash\{I\}=\tilde{F}'\backslash\{I',J'\}$;
\item[(b)]$\tilde{F}\cap\{I\}=\tilde{F}'\cap\{I',J'\}=\emptyset$ or $\{I\}\subset \tilde{F},\{I',J'\}\subset
\tilde{F}'$.
\end{itemize}
On the other hand, $A_G(\xi)$ (resp. $A_L(\xi')$) is an $\tF_2$
vector space with one basis element $\sigma_K$ (resp. $\sigma_{K}'$)
for each interval $K\neq I_0$ (resp. $K\neq I_0'$) of $\Lambda$
(resp. $\Lambda'$) and $S_{\xi,\xi'}$ is the quotient of
$A_G(\xi)\times A_L(\xi')$ by the subgroup $H_{\xi,\xi'}$ generated
by elements of the form
$\sigma_I\sigma'_{I'},\sigma_I\sigma'_{J'},\sigma_K\sigma'_K$ with
$K\neq I_0$ an interval of both $\Lambda$  and $\Lambda'$. Now the
compatibility between (\ref{eqn-10}) and (\ref{eqn-11}) is clear.

(ii) $I=I_0$. Then $\Lambda_F\to\Lambda_{F'}$ if and only if
$\tilde{F}=\tilde{F}'$. On the other hand, $A_G(\xi)$, $A_L(\xi')$
and $S_{\xi,\xi'}$ are obtained by setting $\sigma_I=\sigma'_{I'}=1$
in (i). Again the compatibility between (\ref{eqn-10}) and
(\ref{eqn-11}) is clear.
\end{proof}

\section{Complement}
\subsection{}In \cite{Lu5}, Lusztig gives an apriori description of the Weyl group representations that
parametrize the pairs $(\rc,1)$, where $\rc$ is a unipotent class in
$G$ or a nilpotent orbit in $\Lg$. We list the results of \cite{Lu5}
here.

Let $R$ be a root system of type $B_n,C_n$ or $D_n$ with simple
roots $\Pi$ and $\mathbf{W}$ the weyl group. There exists a unique
$\alpha_0\in R\backslash \Pi$ such that $\alpha-\alpha_i\notin
R,\forall\ \alpha_i\in \Pi$. Let $J\subset\Pi\cup\{\alpha_0\}$ be
such that $J=|\Pi|$. Let ${\mathbf{W}}_J$ be the subgroup of
${\mathbf{W}}$ generated by $s_{\alpha},\alpha\in J$.

(i) Denote $\mathcal{S}_{\mathbf{W}}$ the set of special
representations of $\mathbf{W}$. The set of unipotent classes when
$\text{char}(\tk)\neq 2$  is in bijection
 with the set (see
\cite{Lu6,Lu1}) \beq
\mathcal{S}^1_{\mathbf{W}}=\{j_{{\mathbf{W}}_J^*}^{\mathbf{W}}E,\
E\in\mathcal{S}_{ {\mathbf{W}}_J^*}\},\eeq where ${\mathbf{W}}_J^*$
is defined as ${\mathbf{W}}_J$ by taking $\alpha_0$ such that
$\check{\alpha}_0-\check{\alpha}_i\notin \check{R},\forall\
\alpha_i\in \Pi$ ($\check{R}$ is the coroot lattice and
$\check{\alpha}_0,\check{\alpha}_i$ are coroots).

(ii) The set of unipotent classes when $\text{char}(\tk)=2$ is in
bijection with the set (see \cite{Lu2}) \beq
\mathcal{S}^2_{\mathbf{W}}=\{j_{{\mathbf{W}}_J}^{\mathbf{W}}E,\
E\in\mathcal{S}^1_{ {\mathbf{W}}_J}\}.\eeq

(iii) The set of nilpotent classes when $\text{char}(\tk)=2$ is in
bijection with the set $\mathcal{T}_{\mathbf{W}}^2$ defined by
induction on $|{\mathbf{W}}|$ as follows (see \cite{Lu4}). If
${\mathbf{W}}=\{1\}$,
$\mathcal{T}_{\mathbf{W}}^2={\mathbf{W}}^\wedge$. If
${\mathbf{W}}\neq\{1\}$, then $\mathcal{T}_{\mathbf{W}}^2$ is the
set of all $E\in {\mathbf{W}}^\wedge$ such that either
$E\in\mathcal{S}_{\mathbf{W}}^1$ or
$E=j_{{\mathbf{W}}_J}^{\mathbf{W}}E_1$ for some ${\mathbf{W}}_J\neq
{\mathbf{W}}$ and some $E_1\in\mathcal{T}^2_{ {\mathbf{W}}_J}$.

\subsection{}One can show that the set of nilpotent orbits in $\Lg^*$
when $\text{char}(\tk)=2$ is in bijection with the set
$\mathcal{T}_{\mathbf{W}}^{2*}$ defined by induction on
$|\mathbf{W}|$ as follows. If ${\mathbf{W}}=\{1\}$,
$\mathcal{T}_{\mathbf{W}}^{2*}={\mathbf{W}}^\wedge$. If
${\mathbf{W}}\neq\{1\}$, then $\mathcal{T}_{\mathbf{W}}^{2*}$ is the
set of all $E\in {\mathbf{W}}^\wedge$ such that either
$E\in\mathcal{S}_{\mathbf{W}}^1$ or
$E=j_{{\mathbf{W}}_J^*}^{\mathbf{W}}E_1$ for some
${\mathbf{W}}_J^*\neq {\mathbf{W}}$ and some
$E_1\in\mathcal{T}^{2*}_{ {\mathbf{W}}_J^*}$.

\vskip 10pt {\noindent\bf\large Acknowledgement} \vskip 5pt I would
like to thank Professor George Lusztig for guidance, encouragement
and many helpful discussions, and thank the referees for comments.

\end{document}